\theoremstyle{remark}
\newtheorem{rem}{Remark}[section]
\def \ri {{\rm i}}
\def \widebar {\accentset{{\cc@style\underline{\mskip10mu}}}}
\newcommand{\bs}[1]{\boldsymbol{#1}}
\title{Fast multipole method for 3-D Helmholtz equation in layered media\thanks{This work was supported by US Army Research Office (Grant No.W911NF-17-1-0368)
and US National Science Foundation (Grant No. DMS-1802143).}}
\author{Bo Wang\thanks{LCSM(MOE), School of Mathematics and Statistics, Hunan Normal University, Changsha, Hunan, 410081, P. R. China.
Department of Mathematics, Southern Methodist University, Dallas, TX 75275. This author acknowledges the financial support provided by NSFC (grant 11771137),
the Construct Program of the Key Discipline in Hunan Province and a Scientific Research Fund of Hunan Provincial Education Department (No. 16B154).}\and Wenzhong Zhang\thanks{Department of Mathematics, Southern Methodist University, Dallas, TX 75275.}
        \and Wei Cai\thanks{Corresponding author, Department of Mathematics, Southern Methodist University, Dallas, TX 75275({\tt cai@smu.edu}). {\it Submitted to SIAM J. Scientific Computing, March 1, 2019 and accepted for publication, August 26, 2019.}}}
\begin{document}
%\graphicspath{{./figs/}}
\maketitle

\begin{abstract}
In this paper, a fast multipole method (FMM) is proposed to compute long-range
interactions of wave sources embedded in 3-D layered media. The layered media Green's function for the Helmholtz equation, which satisfies the transmission conditions at material interfaces, is decomposed into a free space component and four types of reaction field components arising from wave reflections and transmissions through the layered media. The proposed algorithm is a combination of the classic FMM for the free space component and FMMs  specifically designed for the four types reaction components, made possible by new multipole expansions (MEs) and local expansions (LEs) as well as the multipole-to-local translation (M2L) operators for the reaction field components. { Moreover, equivalent polarization source can be defined for each reaction component based on the convergence analysis of its ME. The FMMs for the reaction components, implemented with the target particles and equivalent polarization sources, are found to be much more efficient than the classic FMM for the free space component due to the fact that the equivalent polarization sources and the target particles are always separated by a material interface.} As a result, the FMM algorithm developed for layered media has a similar computational cost as that for the free space. Numerical results validate the fast convergence of the MEs and the $O(N)$ complexity of the FMM for interactions of low-frequency wave sources in 3-D layered media.
\end{abstract}

\begin{keywords}
fast multipole method, layered media, multipole expansions, local expansions, Helmholtz equation, equivalent polarization sources
\end{keywords}

\begin{AMS}
15A15, 15A09, 15A23
\end{AMS}

\pagestyle{myheadings}
\thispagestyle{plain}
\markboth{Bo Wang, Wenzhong Zhang, and Wei Cai}{Fast multipole method for 3-D Helmholtz equation in layered media}

\section{Introduction}
The fast multipole method (FMM) has been a revolutionary development in modern
computational algorithms for treating many-body interactions. In fact, it was
considered one of the top 10 algorithms in the 20th century \cite{cipra2000best}. The FMM can reduce the $O(N^{2})$ cost of computing long-range interactions
(Columbic electrostatics, wave scattering) among $N$ particles (or sources) to $O(N)$ or
$O(N\log N)$. Such a capability of the FMM has had a tremendous impact on modern computational biology,
astronomy, and computational acoustics and electromagnetics, among many other
applications in sciences and engineering. The original FMMs developed by
Greengard and Rokhlin \cite{greengard1987fast,greengard1997new} for particles in the free space are based on multipole
expansions (MEs) for a Green's function\ $G(\bs r,\bs r^{\prime}%
)$=$G(\bs r-\bs r^{\prime})$ to achieve a low-rank representation
for the {long-range interactions between sources}. The MEs for the wave interactions were made
possible by the Graf's addition theorem for Bessel functions. As a simple way
to view the ME, we split the argument of the Green's function,
the zeroth order Bessel function for wave interactions, as $G(\bs r%
-\bs r^{\prime})=G((\bs r-\bs r_{c})+(\bs r_{c}%
-\bs r^{\prime}))$ where $\bs r_{c}$ is selected as the center of
many sources $\bs r^{\prime}$ while $\bs r$ is any field
location far away (well-separated) from all sources. The Graf's addition theorem gives an expansion
of $G$ in terms of separable terms involving ($\bs r-\bs r_{c}$) and
($\bs r_{c}-\bs r^{\prime}$), respectively. Terms involving
($\bs r-\bs r_{c}$) will appear as the higher order multipoles, i.e.,
higher order Bessel functions, while terms with ($\bs r_{c}-\bs r%
^{\prime}$) for each source $\bs r^{\prime}$ contributes  to the ME
expansion coefficients. For a far field location ${\bs r,}$ such an expansion
exhibits exponential convergence, thus only a small number of $p$ terms,
therefore $p$ ME-coefficients, are needed, resulting in a $p$-term
low-rank approximation for the far field of many sources.

{As the original FMM was developed based on far field approximations by MEs obtained by the Graf's addition theorem applied to free space Green's functions, we can understand the difficulties of
extending this approach to sources embedded in layered media, which are
ubiquitous in computer engineering, geophysical, and medical image applications.}
For those applications, Green's functions for layered media (layered Green's functions) are preferred to describe the interactions to avoid introducing artificial unknowns
on the infinite material interfaces. Unfortunately, in those cases, { no theory like Graf's addition theorem is available for layered Green's functions.  For this reason, the ME-based FMM
of Greengard and Rokhlin
has not been extended to layered Green's functions due to the lack of corresponding multipole expansions of the far field of wave sources in general multi-layered media.
In our recent work \cite{cho2018heterogeneous}, MEs for the
case of an impedance 2-D half space were obtained by using an analytical image representation of the layered Green's function, which however are not available for general multi-layered media. Then, in our more recent work \cite{zhang2018exponential},  MEs and multipole to local (M2L) translation operators were developed for 2-D Helmholtz equations in general layered media.
In a different approach, an inhomogeneous plane wave fast method was developed \cite{huchew2000} by approximating the Green's function
with plane waves sampled along a steepest descent path in the complex wave number space.}
So far, to handle
the wave interaction of sources embedded in general layered media using layered Green's
functions, some other fast methods have been proposed such as FMM using Taylor expansion-based low-rank representation of Green's function
 \cite{wang2018}\cite{tausch2003},
%windowed Green's function method \cite{cho2012parallel,cai2013computational},
and cylindrical wave decomposition of the Green's function together with 2-D FMMs for cylindrical waves \cite{cho2012parallel}.
On the other hand, kernel independent compression techniques \cite{ying2004} \cite{krasny2019} could also be considered for
the layered Green's functions.

In this paper, we will develop MEs and local expansions (LEs) for
general layered Green's function for { 3-D Helmholtz equations as well as relevant M2L operators (thus extending our previous results for 2-D Helmholtz equations \cite{zhang2018exponential})},  providing the key ingredients in the hierarchical design of the 3-D FMM. { The layered Green's function is decomposed
into a free space part and four reaction fields arising from wave reflections
and transmissions through the layered media}. Our approach relies on two technical identities, the first one
expresses the layered Green's functions in terms of Sommerfeld
integral involving plane waves, and the second one is the Funk-Hecke identity, which expresses plane
waves in terms of cylindrical waves (Bessel functions). With the separable
property of plane waves as well as the Funk-Hecke identity, we are able to
derive the MEs and LEs for the layered Green's functions in 3-D as well as the M2L
operators. As shown in \cite{zhang2018exponential}, the convergence of the far
field for the reaction field component of the layered Green's function in fact depends on the distance between the target and the location
of some kind of polarization source, { which can be defined for each reaction component (see (\ref{eqpolarizedsource}) and Fig. \ref{sourceimages})}.
Therefore, in the implementation of the FMM for the reaction field components, the original
and the polarization sources will be combined and embedded into a rectangular box, upon which the oct-tree structure will be built and the FMM can be implemented. As a result, the FMM
for the free space can be extended straightforwardly to layered Green's functions. Numerical results will show the fast convergence of the MEs and LEs for the
layered Green's function as well as the $O(N)$ efficiency for 3-D low-frequency acoustic
wave interactions.

The rest of the paper is organized as follows. In section 2, we will
re-derive the ME, LE and relevant M2L operators for the free space Green's function using the new approach discussed above. The same technique will be then applied
to layered Green's functions. Section 3 gives the derivation of ME, LE and M2L operators for layered Green's functions, given as Sommerfeld integrations of plane waves.
A FMM based on the new ME, LE and M2L operator for reaction components is then presented. Section 4 gives numerical results for sources in three layers media.
Various efficiency comparisons are given to show the performance of the proposed FMM. Finally, conclusion and discussions are given in Section 5.

\section{A new derivation for $h$- and $j$-expansions of the free space Green's function of 3-D Helmholtz equation and translations}
In this section, we first review the $h$- and $j$-expansions, namely the multipole and local expansions, for the free space Green's function of the Helmholtz equation and the translation from a $h$-expansion to a $j$-expansion.  They are the key formulas in the free space FMM and can be derived by using the well-known addition theorems for regular and singular wave functions. Then, we will introduce a new derivation for the $h$- and $j$-expansions by using an integral representation of Hankel functions. This new technique will be applied to derive multipole and local expansions for reaction components of layered Green's functions in the next section.
\subsection{The $h$- and $j$-expansions of free space Green's function}
Let us first recall the addition theorems for regular and singular wave functions.
Suppose $\bs r_j=(r_j, \theta_j,\varphi_j)$ is the position vector of a point $P$ in spherical coordinates with respect to a given center $O_j$ for $j=1, 2$, and $\bs b=(b, \alpha,\beta)$ is the position vector of $O_1$ with respect to $O_2$, such that $\bs r_2=\bs r_1+\bs b$.
%\begin{figure}[ht!]\label{additiontheorem}
%	\centering
%	\includegraphics[scale=1.2]{}
%	\caption{Spherical coordinates used in multipole and local expansions.}
%\end{figure}
Meanwhile, the spherical harmonics are defined as
\begin{equation}
Y_n^m(\theta,\varphi)=(-1)^m\sqrt{\frac{2n+1}{4\pi}\frac{(n-m)!}{(n+m)!}}P_n^m(\cos\theta)e^{\ri m\varphi}=\widehat P_n^m(\cos\theta)e^{\ri m\varphi},
\end{equation}
where $0 \le |m| \le n$ and $P_n^m(\cos\theta)$ is the associated Legendre function and $\widehat P_n^m(\cos\theta)$ is its normalized version. We will use the following addition theorems \cite{martin2006multiple}.
\begin{theorem}\label{zeroorderaddthm}
	Let $z_{\nu}(\omega)$ be any spherical Bessel function of order $\nu$, that is
	$$z_{\nu}(\omega)=j_{\nu}(\omega), y_{\nu}(\omega), h_{\nu}^{(1)}(\omega), \hbox{or}\; h_{\nu}^{(2)}(\omega).$$
	Let $\bs r_2=\bs r_1+\bs b$. Then
	\begin{equation}\label{sphbessel00}
	z_0(kr_2)=
	 \begin{cases}
	 \displaystyle 4\pi\sum\limits_{n=0}^{\infty}\sum\limits_{m=-n}^n(-1)^nz_n(kb)\overline{Y_n^m(\alpha,\beta)}j_n(kr_1)Y_n^m(\theta_1,\varphi_1),\quad{\rm if}\; r_1<b, \\
	 \displaystyle 4\pi\sum\limits_{n=0}^{\infty}\sum\limits_{m=-n}^n(-1)^nj_n(kb)\overline{Y_n^m(\alpha,\beta)}z_n(kr_1)Y_n^m(\theta_1,\varphi_1),\quad{\rm if}\; r_1>b.
	 \end{cases}
	\end{equation}
\end{theorem}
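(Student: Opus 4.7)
The plan is to establish the identity separately for the regular case $z_\nu=j_\nu$ and for the singular cases $z_\nu\in\{y_\nu,h_\nu^{(1)},h_\nu^{(2)}\}$, and within each case to exploit the fact that the two statements ($r_1<b$ versus $r_1>b$) are related either by the symmetry $\bs r_1\leftrightarrow\bs b$ or by the location of the singularity of $z_0(kr_2)$ as a function of $\bs r_1$. The common engine is the Funk--Hecke plane-wave expansion
\begin{equation*}
e^{\ri k\hat{\bs k}\cdot\bs x}=4\pi\sum_{n=0}^\infty\sum_{m=-n}^n\ri^n\,j_n(k|\bs x|)\,Y_n^m(\hat{\bs x})\,\overline{Y_n^m(\hat{\bs k})},
\end{equation*}
applied to a suitable ``integral representation'' of $z_0$, which is precisely the device the paper advertises as the new vehicle for layered Green's functions.

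For $z_\nu=j_\nu$, I would start from the spherical-mean representation $j_0(k|\bs s|)=\frac{1}{4\pi}\int_{S^2}e^{\ri k\hat{\bs k}\cdot\bs s}\,dS(\hat{\bs k})$ applied to $\bs s=\bs r_1+\bs b$, and then factor $e^{\ri k\hat{\bs k}\cdot(\bs r_1+\bs b)}=e^{\ri k\hat{\bs k}\cdot\bs r_1}\,e^{\ri k\hat{\bs k}\cdot\bs b}$ and insert the Funk--Hecke expansion into each factor. After interchanging the $\hat{\bs k}$-integral with the double sum and invoking the identity $\int_{S^2}\overline{Y_n^m(\hat{\bs k})}\,\overline{Y_\ell^p(\hat{\bs k})}\,dS=(-1)^m\delta_{n\ell}\delta_{p,-m}$ (which follows from $\overline{Y_n^m}=(-1)^m Y_n^{-m}$), only the indices $\ell=n$, $p=-m$ survive; the phase $\ri^{2n}$ collapses to $(-1)^n$ and $(-1)^m Y_n^{-m}(\alpha,\beta)$ recombines into $\overline{Y_n^m(\alpha,\beta)}$, delivering the claimed formula with $z_n=j_n$. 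Because this identity is symmetric under $\bs r_1\leftrightarrow\bs b$, it automatically handles both regimes $r_1<b$ and $r_1>b$ at once.

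For $z_\nu\in\{y_\nu,h_\nu^{(1)},h_\nu^{(2)}\}$ with $r_1<b$, I would use an analyticity/uniqueness argument: $z_0(kr_2)$ is an analytic solution of $(\Delta_{\bs r_1}+k^2)u=0$ throughout the ball $\{|\bs r_1|<b\}$ because its only singularity sits at $\bs r_1=-\bs b$ on the bounding sphere. By the uniqueness of the spherical-harmonic series of regular Helmholtz solutions,
\begin{equation*}
z_0(kr_2)=\sum_{n,m}A_{nm}(\bs b)\,j_n(kr_1)\,Y_n^m(\theta_1,\varphi_1).
\end{equation*}
Rotating coordinates so that $\bs b=b\hat{\bs z}$ collapses the left-hand side to an axisymmetric profile, forcing $A_{nm}=0$ for $m\neq 0$ in those coordinates; the classical one-dimensional Gegenbauer addition formula then identifies the $m=0$ coefficient as $(2n+1)(-1)^n z_n(kb)$. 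Rotational covariance (equivalently the scalar spherical-harmonic addition theorem for $P_n$) restores the full directional dependence, yielding $A_{nm}(\bs b)=4\pi(-1)^n z_n(kb)\overline{Y_n^m(\alpha,\beta)}$. The $r_1>b$ branch follows by swapping the roles of $\bs r_1$ and $\bs b$: the singularity now sits on the sphere $|\bs r_1|=b$ viewed from outside, forcing $j_n(kb)$ on the small argument and $z_n(kr_1)$ on the large one.

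The main obstacle is the singular case, since the Rayleigh-mean identity used in Step~1 fails for $y_0$ and $h_0^{(1,2)}$---these are not spherical means of plane waves over the real sphere $S^2$. The clean route is the uniqueness argument above, but I would also develop a self-contained proof that mirrors Step~1 by using a Sommerfeld/Weyl contour representation of the form $h_0^{(1)}(kr)=\frac{1}{2\pi k}\int_\Gamma e^{\ri k\hat{\bs k}\cdot\bs r}\,dS(\hat{\bs k})$ with $\Gamma$ a complex contour on the unit sphere, applying Funk--Hecke termwise and justifying the interchange of contour integration and summation via the exponential smallness of $j_n(kr_1)$ along the steepest-descent direction. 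This second route is precisely the one the paper will reuse for the reaction-field components of the layered Green's function, so I would develop it carefully in parallel with the simpler uniqueness argument.
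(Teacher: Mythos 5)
Your proof is correct, but note that the paper never proves Theorem \ref{zeroorderaddthm} at all: it is imported verbatim from the literature (Martin's book), with the scalar special cases attributed to Clebsch ($z_0=j_0$) and Gegenbauer ($z_0=y_0$) via Watson. So there is no in-paper proof to match; what you have supplied is a self-contained derivation that the paper deliberately outsources. Your two main steps are sound: the spherical-mean representation of $j_0$ plus the Funk--Hecke expansion and the orthogonality relation $\int_{S^2}\overline{Y_n^m}\,\overline{Y_\ell^p}\,dS=(-1)^m\delta_{n\ell}\delta_{p,-m}$ correctly produce the regular case (with $\ri^{2n}=(-1)^n$ accounting for the sign), and the uniqueness-plus-rotation argument for the singular cases correctly reduces the full spherical-harmonic statement to the classical axisymmetric Gegenbauer formula, with the $(-1)^n$ arising from $P_n(\cos(\pi-\theta_1))=(-1)^nP_n(\cos\theta_1)$ and the $r_1>b$ branch following from the $\bs r_1\leftrightarrow\bs b$ symmetry of the bilinear sum. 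Two caveats. First, your singular-case argument still rests on the one-dimensional Gegenbauer addition theorem as a black box, so the proof is a reduction of the 3-D statement to the known 1-D one rather than a from-scratch derivation --- which is fine, and is exactly the logical status the paper assigns to this theorem. Second, your sketched ``second route'' via a complex contour representation of $h_0^{(1)}$ is not an independent afterthought: it is precisely the machinery the paper does develop in Section 2.2 (the Sommerfeld identity \eqref{sommerfeldid} combined with the analytically continued Funk--Hecke formula of Proposition \ref{prop:Funk-Hecke}), and carrying it out rigorously requires the branch-cut analysis and the extension of $\widehat P_n^m$ to complex argument that occupy Lemmas \ref{lemma1}--\ref{lemma2}; the ordinary Funk--Hecke formula only covers real propagation directions, so the termwise application along a steepest-descent contour is not free. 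If you pursue that route as your primary proof you would essentially be reconstructing the paper's Section 2.2; as a proof of Theorem \ref{zeroorderaddthm} itself, your uniqueness argument is the cleaner and entirely adequate choice.
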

The special case of the above theorem when $z_0=j_0$ was proved by Clebsch in 1863, see \cite[p.363]{watson}. According to Watson, another special case with $z_0=y_0$ was due to Gegenbauer.
\begin{theorem}\label{additionthmbesselj}
	Let $\bs r_2=\bs r_1+\bs b$. Then
	\begin{equation}
	j_n(kr_2)Y_n^m(\theta_2,\varphi_2)=\sum\limits_{\nu=0}^{\infty}\sum\limits_{\mu=-\nu}^{\nu}\widehat{S}_{n\nu}^{m\mu}(\bs b)j_{\nu}(kr_1)Y_{\nu}^{\mu}(\theta_1,\varphi_1),
	\end{equation}
	where
	\begin{equation}
	\label{spmhat3d}
	\widehat{S}_{n\nu}^{m\mu}(\bs b)=4\pi(-1)^m\ri^{\nu-n}\sum\limits_{q=0}^{\infty}\ri^qj_q(kb)\overline{Y_q^{\mu-m}(\alpha,\beta)}\mathcal{G}(n,m;\nu,-\mu;q),
	\end{equation}
	with $\mathcal{G}(n,m;\nu,-\mu;q)$ being the Gaunt coefficient.
\end{theorem}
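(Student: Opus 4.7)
The plan is to derive the translation identity by exploiting the fact that regular spherical wave functions $j_n(kr)Y_n^m(\theta,\varphi)$ are the Fourier--Rayleigh coefficients of the plane wave $e^{\ri\bs k\cdot\bs r}$. I would begin with the Rayleigh (Jacobi--Anger) expansion
\begin{equation*}
e^{\ri\bs k\cdot\bs r}=4\pi\sum_{n=0}^{\infty}\sum_{m=-n}^{n} \ri^n j_n(kr)Y_n^m(\theta,\varphi)\overline{Y_n^m(\hat{\bs k})},
\end{equation*}
where $\hat{\bs k}=\bs k/k$, and use the elementary factorization $e^{\ri\bs k\cdot\bs r_2}=e^{\ri\bs k\cdot\bs b}\,e^{\ri\bs k\cdot\bs r_1}$ that follows from $\bs r_2=\bs r_1+\bs b$. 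This converts the stated translation identity into an equality between two $\hat{\bs k}$-dependent spherical expansions that must match coefficient-by-coefficient.

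Expanding each of the three exponentials in its own Rayleigh series relative to the appropriate origin, I would then multiply both sides by $Y_n^m(\hat{\bs k})$ and integrate over the unit sphere $\hat{\bs k}\in S^2$. The orthonormality of spherical harmonics collapses the left-hand side to a single term proportional to $\ri^n j_n(kr_2)Y_n^m(\theta_2,\varphi_2)$, while on the right the product of two Rayleigh series leaves a double sum over $(\nu,\mu)$ and $(q,\sigma)$ weighted by an integral of three spherical harmonics on $S^2$. Such triple-harmonic integrals are, up to standard conventions for conjugation and the selection rule $\sigma=m-\mu$, precisely the Gaunt coefficients $\mathcal{G}(n,m;\nu,-\mu;q)$ appearing in \eqref{spmhat3d}. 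Matching the two sides term-by-term in $(\nu,\mu,q)$ yields the claimed formula for $\widehat{S}_{n\nu}^{m\mu}(\bs b)$.

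The main difficulty is purely combinatorial bookkeeping: the three phase factors $\ri^{n}$, $\ri^{\nu}$, $\ri^{q}$ from the plane-wave expansions must combine with the $\ri^{-n}$ from inverting the left-hand normalization into the prefactor $\ri^{\nu-n}\ri^q$ of $\widehat{S}_{n\nu}^{m\mu}$; similarly the three factors of $4\pi$ reduce to a single $4\pi$ once one of them is absorbed by the orthogonality of spherical harmonics on the sphere, and the $(-1)^m$ prefactor emerges from repeated use of $\overline{Y_\ell^k}=(-1)^k Y_\ell^{-k}$ together with the azimuthal selection rule enforced by the triple-harmonic integral. An alternative route, which avoids the triple integration, is to apply the constant-coefficient differential operator obtained by formally substituting $\hat{\bs k}\mapsto -\ri\nabla_{\bs r_1}/k$ inside the solid harmonic $Y_n^m$ to the scalar identity of Theorem~\ref{zeroorderaddthm} with $z_0=j_0$; this converts $j_0(kr_2)$ into $j_n(kr_2)Y_n^m(\theta_2,\varphi_2)$ by covariance under translation, and reproduces the same Gaunt-coefficient couplings on the right via the Clebsch--Gordan product formula for solid harmonics. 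Either route produces the same expression for $\widehat S_{n\nu}^{m\mu}(\bs b)$, and I would pursue the plane-wave route because it foreshadows the Sommerfeld-integral plus Funk--Hecke technique used later for the layered Green's function.
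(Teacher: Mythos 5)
The paper does not actually prove Theorem~\ref{additionthmbesselj}: it is quoted as a known addition theorem from the multiple-scattering literature (Martin's book), so there is no internal proof to compare against. Your plane-wave route is the standard derivation of this result and it is correct; moreover it is exactly in the spirit of the paper's own Section~2.2, where the singular translation operator $S_{n\nu}^{m\mu}$ is obtained by factoring $e^{\ri\bs k\cdot(\bs r_c^t-\bs r_c^s)}$ out of a Sommerfeld representation and applying the Funk--Hecke expansion, so your choice of the plane-wave route over the solid-harmonic/differential-operator alternative is well motivated. The bookkeeping you flag as the main difficulty does close up under the paper's conventions: projecting $e^{\ri\bs k\cdot\bs b}e^{\ri\bs k\cdot\bs r_1}$ onto $Y_n^m(\hat{\bs k})$ enforces the selection rule $\sigma=m-\mu$ and leaves the triple integral $\int_{S^2}\overline{Y_q^{m-\mu}}\,\overline{Y_\nu^{\mu}}\,Y_n^m\,d\Omega$, which with $\overline{Y_\ell^k}=(-1)^kY_\ell^{-k}$ and the paper's Wigner $3$-$j$ definition of $\mathcal{G}$ equals $(-1)^{\mu}\mathcal{G}(n,m;\nu,-\mu;q)$; since $(-1)^m\overline{Y_q^{\mu-m}}=(-1)^{\mu}Y_q^{m-\mu}$, this reproduces \eqref{spmhat3d} exactly, including the prefactor $4\pi(-1)^m\ri^{\nu-n}\ri^q$ after dividing out the $4\pi\ri^n$ normalization of the left-hand side. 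The only points worth making explicit in a written-up version are (i) the uniform convergence of the Rayleigh series (e.g.\ via $|j_n(x)|\le (x/2)^n/n!$), which justifies the term-by-term integration over $S^2$ and the rearrangement of the double series, and (ii) the precise normalization of the Gaunt coefficient you are using, since conventions differ across references and the $(-1)^m$ in \eqref{spmhat3d} is convention-dependent.
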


\begin{theorem}
	\label{thmhlynm}
	Let $\bs r_2=\bs r_1+\bs b$. Then
	\begin{equation}
	h^{(1)}_n(kr_2)Y_n^m(\theta_2,\varphi_2)=\sum\limits_{\nu=0}^{\infty}\sum\limits_{\mu=-\nu}^{\nu}{S}_{n\nu}^{m\mu}(\bs b)j_{\nu}(kr_1)Y_{\nu}^{\mu}(\theta_1,\varphi_1),
	\end{equation}
	for $r_1<b$, and
	\begin{equation}
	h^{(1)}_n(kr_2)Y_n^m(\theta_2,\varphi_2)=\sum\limits_{\nu=0}^{\infty}\sum\limits_{\mu=-\nu}^{\nu}\widehat{S}_{n\nu}^{m\mu}(\bs b)h^{(1)}_{\nu}(kr_1)Y_{\nu}^{\mu}(\theta_1,\varphi_1),
	\end{equation}
	for $r_1>b$, where $\widehat{S}_{n\nu}^{m\mu}(\bs b)$ is given by \eqref{spmhat3d} and
	\begin{equation}\label{singularseparation}
	\begin{split}
	{S}_{n\nu}^{m\mu}(\bs b)
	&=4\pi(-1)^m\ri^{\nu-n}\sum\limits_{q=0}^{\infty}\ri^qh^{(1)}_q(kb)\overline{Y_q^{\mu-m}(\alpha,\beta)}\mathcal{G}(n,m;\nu,-\mu;q),
	\end{split}
	\end{equation}
	and $\mathcal{G}(n,m;\nu,-\mu;q)$ is a Gaunt coefficient.
\end{theorem}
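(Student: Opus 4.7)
The strategy is the same two-step template that underlies Theorem \ref{additionthmbesselj}: first argue from the Helmholtz PDE that an expansion of the claimed form must exist, and then identify the coefficients by lifting the $n=m=0$ case of Theorem \ref{zeroorderaddthm} via a plane-wave integral representation of $h^{(1)}_n(kr)Y_n^m(\theta,\varphi)$ combined with the Bauer/Funk--Hecke expansion of a plane wave in spherical harmonics.

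For the case $r_1 < b$, the function $u(\bs r_1) := h^{(1)}_n(kr_2)Y_n^m(\theta_2,\varphi_2)$ satisfies the homogeneous Helmholtz equation on the open ball $\{r_1 < b\}$ (its only singularity, at $\bs r_1 = -\bs b$, sits on the boundary sphere), so by separation of variables about $O_1$ it admits a unique convergent expansion $u = \sum_{\nu,\mu} S^{m\mu}_{n\nu}(\bs b)\, j_\nu(kr_1)Y_\nu^\mu(\theta_1,\varphi_1)$. To identify $S^{m\mu}_{n\nu}(\bs b)$, I would substitute a Herglotz/Sommerfeld plane-wave representation of $h^{(1)}_n(kr_2)Y_n^m(\theta_2,\varphi_2)$, factor $e^{\ri\bs k\cdot\bs r_2}=e^{\ri\bs k\cdot\bs r_1}e^{\ri\bs k\cdot\bs b}$, and expand both plane-wave factors by the Bauer/Funk--Hecke identity in regular spherical harmonics of $\bs r_1$ and of $\bs b$. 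The $\hat{\bs k}$-integral of the three resulting spherical harmonics collapses to the Gaunt coefficient $\mathcal{G}(n,m;\nu,-\mu;q)$, while the contour integral in the $\bs b$-factor produces $h^{(1)}_q(kb)\overline{Y_q^{\mu-m}(\alpha,\beta)}$, reproducing \eqref{singularseparation}.

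For the case $r_1 > b$, the same $u$ is instead a radiating solution of the Helmholtz equation on the exterior $\{r_1 > b\}$, inheriting the Sommerfeld radiation condition at infinity from $h^{(1)}_n$. The exterior multipole expansion then forces $u = \sum_{\nu,\mu} \widehat{S}^{m\mu}_{n\nu}(\bs b)\, h^{(1)}_\nu(kr_1)Y_\nu^\mu(\theta_1,\varphi_1)$; rerunning the plane-wave argument with the reversed ordering $b < r_1$ interchanges the roles of $j_q$ and $h^{(1)}_q$ in Bauer's identity, producing $j_q(kb)$ in place of $h^{(1)}_q(kb)$ and recovering $\widehat{S}^{m\mu}_{n\nu}(\bs b)$ as defined in \eqref{spmhat3d}.

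The main technical obstacle is rigorously justifying the interchange of summation with the Herglotz/Sommerfeld contour integration, so that the triple spherical-harmonic integral may legitimately be collapsed to a Gaunt coefficient inside the series. This is substantially eased by the support property $\mathcal{G}(n,m;\nu,-\mu;q)=0$ for $q<|\nu-n|$ or $q>\nu+n$, which collapses the ostensibly infinite inner $q$-series in \eqref{spmhat3d} and \eqref{singularseparation} to a finite sum for each fixed $(n,\nu,m,\mu)$; absolute convergence in $(\nu,\mu)$ then follows from the uniform convergence of the standard $j_\nu$- and $h^{(1)}_\nu$-expansions on compact subsets of the respective regions $\{r_1<b\}$ and $\{r_1>b\}$.
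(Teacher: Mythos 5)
The paper does not actually prove Theorem \ref{thmhlynm}: it is imported verbatim as a classical addition theorem from \cite{martin2006multiple}, where the standard derivation goes through the Gaunt linearization $Y_n^mY_\nu^\mu=\sum_q\mathcal{G}(n,m;\nu,\mu;q)Y_q^{m+\mu}$ applied to the $z_0=h_0^{(1)}$ case of Theorem \ref{zeroorderaddthm}, together with orthogonality on the sphere. Your route is different but is very close in spirit to what the paper itself does later in Section 2.2, where the splitting $e^{\ri\bs k\cdot(\bs r-\bs r_c^s)}=e^{\ri\bs k\cdot(\bs r-\bs r_c^t)}e^{\ri\bs k\cdot(\bs r_c^t-\bs r_c^s)}$ and the extended Funk--Hecke formula \eqref{extfunkhecke} yield an integral representation of $S_{n\nu}^{m\mu}$ in \eqref{metole}; your proposal in effect adds the reduction of that integral to the closed form \eqref{singularseparation}, which is a legitimate alternative derivation. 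The existence half of your argument (interior regular expansion on $\{r_1<b\}$, exterior radiating expansion on $\{r_1>b\}$) is standard and fine.

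There are, however, two concrete gaps in the identification step. First, the Sommerfeld representation \eqref{wavefunspectralform} of $h_n^{(1)}(kr_2)Y_n^m(\theta_2,\varphi_2)$ holds only for $z_2\ge 0$, and, more seriously, once you factor off $e^{\ri\bs k\cdot\bs b}$ the term-by-term integrals defining your candidate coefficients converge only when the $z$-component of $\bs b$ is strictly positive: along the evanescent part of the contour $k_z\sim\ri k_\rho$, the product $\widehat P_n^m(k_z/k)\widehat P_\nu^\mu(k_z/k)$ grows like $k_\rho^{\,n+\nu}$ and only the factor $e^{\ri k_z b_z}\sim e^{-k_\rho b_z}$ can control it (this is exactly why the paper imposes $z_c^t>z_c^s$ on its translation operator). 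As written, your argument therefore establishes \eqref{singularseparation} only for $\bs b$ in a half-space, and you need an additional step --- real-analyticity in $\bs b$ of both the true expansion coefficient and the right-hand side of \eqref{singularseparation}, or a rotation of axes combined with the transformation law of $Y_q^{\mu-m}$ under rotations --- to reach all $\bs b$. Second, you identify the sum--integral interchange as the main obstacle and offer the Gaunt support property $\mathcal{G}=0$ for $q<|\nu-n|$ or $q>\nu+n$ as the cure; but that property only truncates the inner $q$-sum after the linearization has been carried out, and does nothing about the divergence just described, which is where the real delicacy lies. (The linearization of $\widehat P_n^m\widehat P_\nu^{-\mu}$ into $\sum_q\mathcal{G}\,\widehat P_q^{m-\mu}$ must also be continued to complex $k_z/k$ with the branch \eqref{branch}; this is harmless but should be stated.) None of this is fatal, but as it stands the proposal proves the theorem only on a proper subregion of its stated domain.
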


The Gaunt coefficient $\mathcal{G}(n,m;\nu,-\mu;q)$ is defined using the Wigner $3-j$ symbol as follows
\begin{equation}
\mathcal{G}(n,m;\nu,\mu;q)=(-1)^{m+\mu}\mathcal{S}\begin{pmatrix}
n & \nu & q\\
0 & 0   & 0
\end{pmatrix}\begin{pmatrix}
n & \nu & q\\
m & \mu & -m-\mu
\end{pmatrix},
\end{equation}
where $\mathcal{S}=\sqrt{(2n+1)(2\nu+1)(2q+1)/(4\pi)}$. Although we have explicit formulas \eqref{spmhat3d} and \eqref{singularseparation} for the separation matrices $\widehat{S}_{n\nu}^{m\mu}(\bs b)$ and ${S}_{n\nu}^{m\mu}(\bs b)$, they are too complicated to be used directly for practical computations. Fortunately, recurrence formulas are available for their computations (cf. \cite{chew1992recurrence,gumerov2004recursions}).

%\begin{equation}
%\begin{split}
%\bs r-\bs r_c^{\rm me}=&(r_{\rm me}, \theta_{\rm me},\varphi_{\rm me})\\
%\bs r-\bs r_c^{\rm le}=&(r_{\rm le}, \theta_{\rm le},\varphi_{\rm le})\\
%\bs r^{\prime}-\bs r_c^{\rm me}=&(r_{\rm me}^{\prime}, \theta_{\rm me}^{\prime},\varphi_{\rm me}^{\prime})\\
%\bs r^{\prime}-\bs r_c^{\rm me}=&(r_{\rm le}^{\prime}, \theta_{\rm le}^{\prime},\varphi_{\rm le}^{\prime})
%\end{split}
%\end{equation}
%
%\begin{equation}
%\begin{split}
%\bs r-\bs r_{\rm c}^{\rm s}=&(r_{\rm s}, \theta_{\rm s},\varphi_{\rm s})\\
%\bs r-\bs r_{\rm c}^{\rm t}=&(r_{\rm t}, \theta_{\rm t},\varphi_{\rm t})\\
%\bs r'-\bs r_{\rm c}^{\rm s}=&(r'_{\rm s}, \theta'_{\rm s},\varphi'_{\rm s})\\
%\bs r'-\bs r_{\rm c}^{\rm t}=&(r'_{\rm t}, \theta'_{\rm t},\varphi'_{\rm t})
%\end{split}
%\end{equation}

\begin{figure}[ht!]\label{3dspherical}
	\centering
	\includegraphics[scale=1.1]{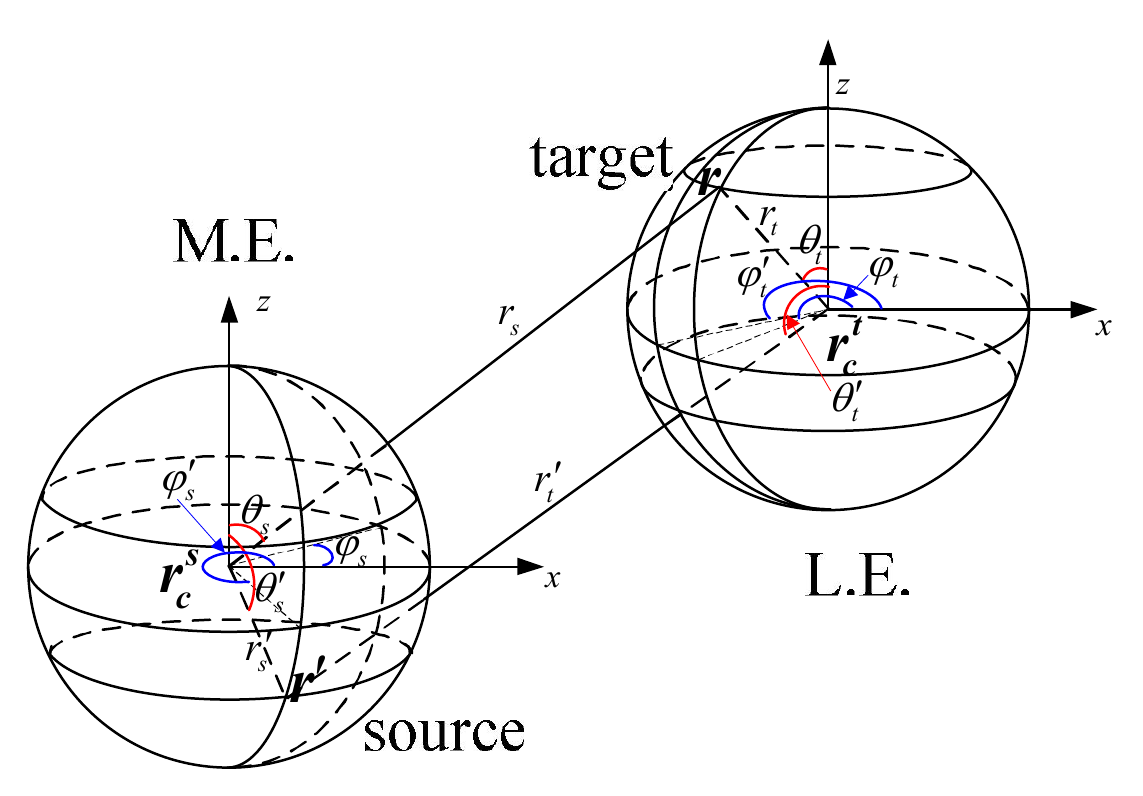}
	\caption{Spherical coordinates used in multipole and local expansions.}
\end{figure}
With these addition theorems, we can present the $h$- and $j$-expansions, which are also named as  multipole and local expansions in the FMM. Consider the free space Green's function of the Helmholtz equation with a source and a target at $\bs r'=(x', y', z')$ and $\bs r=(x, y, z)$, respectively. By using the addition Theorem \ref{zeroorderaddthm}, we have an $h$-expansion (multipole expansion) with respect to { a (source) center $\bs r_c^{s}$}:
\begin{equation}\label{freespace3dmulexp}
h^{(1)}_0(k|\bs r-\bs r'|)=\sum\limits_{|n|=0}^{\infty}\sum\limits_{m=-n}^nM_{nm}h_n^{(1)}(kr_s)Y_n^m(\theta_s,\varphi_s),
\end{equation}
and a $j$-expansion (local expansion) with respect to { a (target) center $\bs r_c^{t}$}:
\begin{equation}\label{freespace3dlocexp}
h^{(1)}_0(k|\bs r-\bs r'|)=\sum\limits_{|n|=0}^{\infty}\sum\limits_{m=-n}^nL_{nm}j_n(kr_t)Y_n^m(\theta_t,\varphi_t),
\end{equation}
where
\begin{equation}\label{melecoeffree}
M_{nm}=4\pi j_n(kr_s')\overline{Y_n^{m}(\theta'_s,\varphi_s')},\quad L_{nm}=4\pi h^{(1)}_n(kr_t')\overline{Y_n^{m}(\theta_t',\varphi_t')},
\end{equation}
{ $\bs r_c^{s}=(x_c^s, y_c^s, z_c^s)$ is the source center close to $\bs r'$, $\bs r_c^{t}=(x_c^t, y_c^t, z_c^t)$ is the target center close to $\bs r$, $(r_s, \theta_s,\varphi_s)$, $(r_t, \theta_t,\varphi_t)$
are the spherical coordinates of $\bs r-\bs r_c^s$ and $\bs r-\bs r_c^t$, respectively, and $(r_s',\theta_s',\varphi_s')$, $(r_t',\theta_t',\varphi_t')$
are the spherical coordinates of $\bs r'-\bs r_c^s$ and $\bs r'-\bs r_c^t$, respectively (Fig. \ref{3dspherical}).}

Applying addition Theorem \ref{thmhlynm} to $h_n^{(1)}(kr_s)Y_n^m(\theta_s, \varphi_s)$ in \eqref{freespace3dmulexp}, the translation from the $h$-expansion \eqref{freespace3dmulexp} to the $j$-expansion \eqref{freespace3dlocexp} is given by
\begin{equation}\label{metole}
L_{nm}=\sum\limits_{|\nu|=0}^{\infty}\sum\limits_{\mu=-\nu}^{\nu}{S}_{n\nu}^{m\mu}(\bs r_c^t-\bs r_c^s)M_{\nu\mu}.
\end{equation}
Similarly, we can shift the centers of $h$- and $j$-expansions via the following translations,
\begin{equation}\label{metome}
\begin{split}
\tilde{M}_{nm}
=&\sum\limits_{\nu=0}^{\infty}\sum\limits_{\mu=-\nu}^{\nu}\overline{\widehat{S}_{n\nu}^{m\mu}(\bs r_c^s-\tilde{\bs r}_c^s)}M_{\nu\mu},\quad \tilde L_{nm}=\sum\limits_{n=0}^{\infty}\sum\limits_{\mu=-\nu}^{\nu} \widehat{S}_{\nu n}^{\mu m}(\tilde{\bs r}_c^t-\bs r_c^t)L_{\nu\mu},
\end{split}
\end{equation}
where
\begin{equation}
\tilde{M}_{nm}=4\pi j_n(k\tilde r_s')\overline{Y_{n}^{m}(\tilde\theta'_s,\tilde\varphi_s')},\quad \tilde L_{nm}=4\pi h^{(1)}_{n}(k\tilde r_t')\overline{Y_{n}^{m}(\tilde\theta_t',\tilde\varphi_t')}
\end{equation}
are the $h$- and $j$-expansion coefficients at new centers $\tilde{\bs r}_c^s$ and $\tilde{\bs r}_c^t$, respectively.

An important feature in \eqref{freespace3dmulexp}-\eqref{freespace3dlocexp} is that the source and target coordinates { are separated,
which will be important in deriving low rank approximations for far fields in the FMM (cf. \cite{greengard1987fast,greengard1997new})}.
However, this target-source separation can also be achieved in the Fourier spectral domain.
A new derivation via Fourier domain for \eqref{freespace3dmulexp} and \eqref{freespace3dlocexp} will be given by using
an integral representation of $h_0^{(1)}(k|\bs r|)$.  Moreover, this approach can also be used to derive multipole and local
expansions for the reaction components of Green's functions in layered media later.

\subsection{A new derivation of $h$- and $j$-expansions}
For a spherical wave, we have the well-known Sommerfeld identity
\begin{equation}\label{sommerfeldid}
h_0^{(1)}(k|\bs r|)=\frac{1}{2k\pi}\int_0^{\infty}\int_0^{2\pi}k_{\rho}e^{\ri k_{\rho}(x\cos\alpha+y\sin\alpha)}\frac{e^{\ri k_z|z|}}{k_z}d\alpha  dk_{\rho},
\end{equation}
where $k_z=\sqrt{k^2-k_{\rho}^2}$.
It is necessary to point out that the identity \eqref{sommerfeldid} is for a lossless medium (i.e., $k$ is real) and needs to be considered as a limiting case of a lossy medium. The contour for the integral of $k_{\rho}$ is usually set by deforming the real axis to the fourth quadrant. In order to satisfy the outgoing wave radiation condition in the integrand, we have to ensure that $\mathfrak{Re}k_{z}>0$ and $\mathfrak{Im}k_{z}>0$.
With the Sommerfeld identity \eqref{sommerfeldid}, we can make a source-target separation in the spectral domain as follows
\begin{equation}
\label{positivecase}
\begin{split}
h_0^{(1)}(k|\bs r-\bs r'|)=\frac{1}{2k\pi}\int_0^{\infty}\int_0^{2\pi}k_{\rho}\frac{e^{\ri \bs k\cdot(\bs r-\bs r_c^s)}e^{-\ri \bs k\cdot(\bs r'-\bs r_c^s)}}{k_z}d\alpha  dk_{\rho},\\
h_0^{(1)}(k|\bs r-\bs r'|)=\frac{1}{2k\pi}\int_0^{\infty}\int_0^{2\pi}k_{\rho}\frac{e^{\ri \bs k\cdot(\bs r-\bs r_c^t)}e^{-\ri \bs k\cdot(\bs r'-\bs r^t_c)}}{k_z}d\alpha  dk_{\rho}
\end{split}
\end{equation}
for $z\geq z'$ where $\bs k=(k_{\rho}\cos\alpha, k_{\rho}\sin\alpha, k_z).$
Without loss of generality, here we only consider the case $z\geq z'$ for an illustration.

Next, we evoke the well-known Funk-Hecke formula \cite{watson,martin2006multiple}
\begin{equation}\label{Funk-Hecke-formula}
\begin{split}
e^{\ri \bs k\cdot{\bs r}}=&\sum\limits_{n=0}^{\infty}\sum\limits_{m=-n}^n4\pi \ri^nj_n(kr)\overline{Y_n^m(\theta,\varphi)}\widehat{P}_n^m\Big(\frac{k_z}{k}\Big)e^{\ri m\alpha}\\
=&\sum\limits_{n=0}^{\infty}\sum\limits_{m=-n}^n4\pi \ri^nj_n(kr)Y_n^m(\theta,\varphi)\widehat{P}_n^m\Big(\frac{k_z}{k}\Big)e^{-\ri m\alpha},
\end{split}
\end{equation}
which gives spherical harmonic expansions for plane waves and will be used for the plane waves inside \eqref{positivecase}. This classic Funk-Hecke formula only works for propagating plane wave, i.e., $0<k_{\rho}\leq k$ where all components of the propagation vector $\bs k$ are real. However, the Fourier spectral representations \eqref{positivecase} involve not only propagating but also evanescent plane waves, i.e., $k_{\rho}>k$ where $k_z$ is purely imaginary. Therefore, for the spherical harmonic expansion of the evanescent plane wave, we need to extend the range of Funk-Hecke formula from $k_z\in \{\omega| 0\leq \omega\leq k\}$ to $k_z\in\{\omega| 0\leq \omega\leq k\}\cup \{\omega|\omega=\ri y, \, y\geq 0\}$. Fortunately, we can show that formula \eqref{Funk-Hecke-formula} holds for all $k_{z}\in \mathbb C$ by choosing an appropriate branch for the square root function. To see this, the left hand side of \eqref{Funk-Hecke-formula} can be written as
\begin{equation}\label{lefthandside}
e^{\ri \bs k\cdot{\bs r}}=e^{\ri(\sqrt{k^2-k_z^2}(x\cos\alpha+y\sin\alpha)+k_zz)},
\end{equation}
while the right hand side only has $k_z$ as the argument of the normalized associated Legendre function. For the normalized associated Legendre function  $\widehat{P}_n^m(x)$, we have $\widehat{P}_n^{-m}(x)=(-1)^m\widehat{P}_n^m(x), m>0$ and also the Rodrigues' formula
\begin{equation}
\widehat P_n^m(x)=\frac{1}{2^nn!}\sqrt{\frac{2n+1}{4\pi}\frac{(n-m)!}{(n+m)!}}(1-x^2)^{\frac{m}{2}}\frac{d^{n+m}}{dx^{n+m}}(x^2-1)^n.
\end{equation}
If $m$ is even, the analytic extension of $\widehat P_n^m(x)$ into the complex plane defined by
\begin{equation}
\widehat P_n^m(z)=\frac{1}{2^nn!}\sqrt{\frac{2n+1}{4\pi}\frac{(n-m)!}{(n+m)!}}(1-z^2)^{\frac{m}{2}}\frac{d^{n+m}}{dz^{n+m}}(z^2-1)^n,\quad z\in \mathbb C,
\end{equation}
is a polynomial of $z$ and hence is an entire function. For odd $m$, $\widehat P_n^m(x)$ can be written as
\begin{equation}
\widehat P_n^m(x)=\sqrt{1-x^2}Q_{n-1}(x),
\end{equation}
where $Q_{n-1}(x)$ is a polynomial of at most $(n-1)$-th degree. Therefore, a complex extension defined by
\begin{equation}\label{extensionmodd}
\widehat P_n^m(z)=\sqrt{1-z^2}Q_{n-1}(z),\forall z\in\mathbb C,
\end{equation}
involves the multivalued function $\sqrt{1-z^2}$ similarly as in \eqref{lefthandside}. In order to have an analytic continuation for the Funk-Hecke formula \eqref{Funk-Hecke-formula} for complex $k_z$, we need to choose an appropriate branch in \eqref{lefthandside} and \eqref{extensionmodd}. Let us consider polar forms
$$z+1=r_1e^{\ri\theta_1}, \quad -\pi<\theta_1\leq\pi,\quad  z-1=r_2e^{\ri\theta_2},\quad -\pi<\theta_2\leq\pi,$$
where $\theta_1$ and $\theta_2$ are the principal values of the arguments of complex numbers $z+1$ and $z-1$.
Then, we cut the complex plane from $-1$ to $+1$ along the real axis and choose the branch
\begin{equation}\label{branch}
\sqrt{1-z^2}=-\ri \sqrt{r_1r_2}e^{\ri\frac{\theta_1+\theta_2}{2}}.
\end{equation}
For any $x\in[-1, 1]$, using this branch leads to
\begin{equation}
\lim\limits_{\epsilon\rightarrow 0^+}\sqrt{1-(x+\ri\epsilon)^2}=\sqrt{1-x^2},\quad \lim\limits_{\epsilon\rightarrow 0^-}\sqrt{1-(x+\ri\epsilon)^2}=-\sqrt{1-x^2}.
\end{equation}
The extensions $e^{\ri \bs k\cdot\bs r}$ and $\widehat P_n^m(z)$ defined by using \eqref{branch} enjoy the same property, e.g.,
\begin{equation}
\lim\limits_{\epsilon\rightarrow 0^+}\widehat P_n^m(x+\ri\epsilon)=\widehat P_n^m(x),\quad \lim\limits_{\epsilon\rightarrow 0^-}\widehat P_n^m(x+\ri\epsilon)=-\widehat P_n^m(x),\quad\forall x\in[-1, 1].
\end{equation}
We note that the extension of associated Legendre function also satisfies
\begin{equation}
\widehat P_n^m(-z)=(-1)^{n+m}\widehat P_n^m(z),  \quad \widehat P_n^{-m}(z)=(-1)^{m}\widehat P_n^m(z),
\end{equation}
and the recurrence formulas
\begin{equation}
\begin{split}
\widehat P_0^0(z)=\frac{1}{4\pi},\quad \widehat P_n^n(z)=-\sqrt{\frac{2n+1}{2n}}\ri\sqrt{r_1r_2}e^{\ri\frac{\theta_1+\theta_2}{2}}\widehat P_{n-1}^{n-1}(z),\quad n=1, 2, \cdots,\\
\widehat P_{m+k}^m(z)=a_{m+k}^mz\widehat P_{m+k-1}^{m}(z)-b_{m+k}^m\widehat P_{m+k-2}^{m}(z),\quad m=0, 1, \cdots,\;k=1, 2,\cdots,
\end{split}
\end{equation}
where
$$a_{n}^m=\sqrt{\frac{(2n-1)(2n+1)}{(n-m)(n+m)}},\quad b_{n}^m=\sqrt{\frac{(2n+1)(n-m-1)(n+m-1)}{(2n-3)(n+m)(n-m)}}.$$
The above recurrence formulas will be used in the implementation of the FMM.

Next, for the proof of the Funk-Hecke formula in the complex plane,  we need the following lemmas.
\begin{lemma}\label{lemma1}
	For any real number $a\geq 0$, there holds
	\begin{equation}\label{planewaveexp}
	e^{\ri a z}=\sum\limits_{n=0}^{\infty}(2n+1)\ri^nj_n(a)P_n(z), \quad\forall z\in\mathbb C,
	\end{equation}
	where $j_n(a)=\sqrt{\frac{\pi}{2a}}J_{n+1/2}(a)$ is the spherical Bessel function of the first kind, $P_n(z)$ is the Legendre polynomial extended to the complex plane.
\end{lemma}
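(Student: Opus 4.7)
The plan is to reduce the claim to the classical (real-variable) Rayleigh plane-wave expansion and then extend by analytic continuation. For real $x\in[-1,1]$, the identity
\begin{equation*}
e^{\ri a x}=\sum_{n=0}^{\infty}(2n+1)\ri^{n}j_{n}(a)P_{n}(x)
\end{equation*}
is standard (it is the spherical-harmonic decomposition of a plane wave along the cosine of the angle between $\bs k$ and a reference axis, and can be derived from Gegenbauer's formula or from the expansion $J_{\nu}(2a\sin(\theta/2))$ in Legendre series). I would state this as the starting point since it is a textbook result.

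The second step is to check that both sides define entire functions of $z\in\mathbb{C}$, so that the identity theorem applies. The left-hand side $e^{\ri a z}$ is trivially entire. For the right-hand side I would invoke (i) the fixed-$a$ asymptotic $j_{n}(a)=\sqrt{\pi/(2a)}\,J_{n+1/2}(a)\sim a^{n}/(2n+1)!!$ as $n\to\infty$, and (ii) a geometric-growth bound $|P_{n}(z)|\le C(R)\,\rho(R)^{n}$ on every disk $|z|\le R$, which follows from the generating function $(1-2zt+t^{2})^{-1/2}=\sum_{n}P_{n}(z)t^{n}$ and Cauchy's inequality with $|t|$ chosen just smaller than the distance from $0$ to the nearest zero of $1-2zt+t^{2}$. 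The factorial decay of $j_{n}(a)$ dominates the geometric growth of $P_{n}(z)$ times the linear factor $2n+1$, so the series converges absolutely and uniformly on compact subsets of $\mathbb{C}$. This makes the sum entire by the Weierstrass theorem.

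The third and final step is the identity theorem: two entire functions that agree on the interval $[-1,1]$, a set with accumulation points, must agree on all of $\mathbb{C}$. This immediately gives the desired extension.

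The main obstacle is the second step, specifically producing a clean bound on $|P_{n}(z)|$ valid on arbitrary compact sets; once that growth estimate is in hand, the uniform convergence argument and the analytic continuation are routine. One could also sidestep the generating-function route by using Laplace's integral representation $P_{n}(z)=\frac{1}{\pi}\int_{0}^{\pi}(z+\sqrt{z^{2}-1}\cos\varphi)^{n}d\varphi$, which gives the bound $|P_{n}(z)|\le (|z|+\sqrt{|z|^{2}+1})^{n}$ directly; either path works and the rest of the argument is immediate.
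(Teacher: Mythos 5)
Your proposal is correct and follows essentially the same route as the paper: start from the classical real-variable expansion, establish entireness of the right-hand side via a factorial-decay bound on $j_n(a)$ against a geometric-growth bound on $P_n(z)$ over compact sets, and conclude by analytic continuation. The only cosmetic difference is the source of the $P_n$ bound --- the paper factors $P_n(z)$ over its $n$ real roots in $[-1,1]$ and bounds the leading coefficient by $2^n$ to get $|P_n(z)|\le 2^n(|z|+1)^n$, while you propose the generating function or Laplace's integral; all three yield the geometric bound needed.
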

\begin{proof}
	Recall the series (cf. \cite[10.60.7]{Olver2010})
	\begin{equation}
	e^{\ri a\cos\theta}=\sum\limits_{n=0}^{\infty}(2n+1)\ri^nj_n(a)P_n(\cos\theta),
	\end{equation}
	we can see that \eqref{planewaveexp} holds for all $z\in [-1, 1]$. Next, we consider its extension to the whole complex plane. Apparently, $e^{\ri a z}$ is an entire function of $z$. Meanwhile, the spherical Bessel function $j_n(a)$ has the following upper bound (cf. \cite[9.1.62]{Abr.I64})
	\begin{equation}
	|j_n(a)|\leq \frac{\Gamma(\frac{3}{2})}{\Gamma(n+\frac{3}{2})}\Big(\frac{a}{2}\Big)^n\leq \frac{1}{n!}\Big(\frac{a}{2}\Big)^n.
	\end{equation}
	The Legendre polynomial $P_n(z)$ is a polynomial of degree $n$ with $n$ distinct roots $\{z_j\}_{j=1}^n$ in the interval $[-1, 1]$. Therefore,
	\begin{equation}
	|P_n(z)|=|a_n|\prod\limits_{j=1}^n|z-z_j|\leq 2^n(|z|+1)^n, \quad \forall z\in\mathbb C,
	\end{equation}
	here the estimate $a_n=\frac{(2n)!}{2^n(n!)^2}\leq 2^n$ for the coefficient of $z^n$ is used. These upper bounds for $j_n(a)$ and $P_n(z)$ give an estimate
	\begin{equation}
	\sum\limits_{n=0}^{\infty}(2n+1)|\ri^nj_n(a)P_n(z)|\leq \sum\limits_{n=0}^{\infty}(2n+1)\frac{a^n(|z|+1)^n}{n!}=(2a(|z|+1)+1)e^{a(|z|+1)}.
	\end{equation}
	It is easy to show that the series on the righthand side of \eqref{planewaveexp} converges uniformly in any compact set $D\subset \mathbb C$ and hence converges to an entire function of $z$. By the analytic extension theory, we have the proof.
\end{proof}

By using the branch defined in \eqref{branch} for the square roots, we have the extension of the well-known Legendre addition theorem \cite[p.395]{Whi.W27}.
\begin{lemma}\label{lemma2}
	Let $\bs w=(\sqrt{1-w^2}\cos\alpha, \sqrt{1-w^2}\sin\alpha, w)$ be a vector with complex entries, $\theta, \varphi$ be the azimuthal angle and polar angles of a unit vector $\hat{\bs r}$. Define
	\begin{equation}
	\beta(w)=w\cos\theta+\sqrt{1-w^2}\sin\theta\cos(\alpha-\varphi),
	\end{equation}
	then
	\begin{equation}\label{legendreadd}
	P_n(\beta(w))=\frac{4\pi}{2n+1}\sum\limits_{m=-n}^n\widehat P_n^m(\cos\theta)\widehat P_n^m(w)e^{\ri m(\alpha-\varphi)},
	\end{equation}
	for all $w\in\mathbb C$.
\end{lemma}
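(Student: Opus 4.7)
The plan is to reduce \eqref{legendreadd} to the classical real-angle Legendre addition theorem and then extend to arbitrary complex $w$ by observing that both sides share a common $\mathbb{C}[w] + \sqrt{1-w^2}\,\mathbb{C}[w]$ structure. First I would restrict to $w\in[-1,1]$ by writing $w=\cos\theta_2$ with $\theta_2\in[0,\pi]$, so that $\sqrt{1-w^2}=\sin\theta_2\geq 0$, which is precisely the value produced by the branch fixed in \eqref{branch}. The vector $\bs w$ is then a genuine unit vector in $\mathbb{R}^3$ with spherical angles $(\theta_2,\alpha)$, and $\beta(w)=\cos\theta\cos\theta_2+\sin\theta\sin\theta_2\cos(\alpha-\varphi)$ is the cosine of the angle between $\bs w$ and the unit vector with spherical angles $(\theta,\varphi)$. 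The classical Legendre addition theorem \cite[p.395]{Whi.W27} then yields \eqref{legendreadd} for every $w\in[-1,1]$.

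Next I would rewrite both sides of \eqref{legendreadd} in the form $A(w)+\sqrt{1-w^2}\,B(w)$ with $A,B\in\mathbb{C}[w]$ via the branch \eqref{branch}. For the left-hand side, $\beta(w)=cw+d\sqrt{1-w^2}$ with constants $c=\cos\theta$ and $d=\sin\theta\cos(\alpha-\varphi)$, so expanding $P_n(\beta(w))$ by the binomial theorem and collapsing each even power $(\sqrt{1-w^2})^{2k}$ to the polynomial $(1-w^2)^k$ delivers the decomposition. For the right-hand side, the extended Rodrigues formula together with \eqref{extensionmodd} makes $\widehat P_n^m(w)$ a polynomial in $w$ for even $m$ and $\sqrt{1-w^2}$ times a polynomial for odd $m$, so summing over $m$ yields an expression of the same form.

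The final and delicate step is uniqueness of this decomposition: if $A_1+\sqrt{1-w^2}\,B_1=A_2+\sqrt{1-w^2}\,B_2$ on $[-1,1]$, I want to conclude $A_1\equiv A_2$ and $B_1\equiv B_2$ as polynomials. Setting $A=A_1-A_2$ and $B=B_1-B_2$ and squaring $A=-\sqrt{1-w^2}\,B$ gives the polynomial identity $A^2=(1-w^2)B^2$ on $\mathbb{C}$. By unique factorization in $\mathbb{C}[w]$, the prime $1-w$ must divide $A$, and similarly $1+w$, so $(1-w^2)\mid A$; substituting $A=(1-w^2)A'$ returns the analogous identity with $A'$ in place of $B$ and $B$ in place of $A'$, and iterating forces arbitrarily high powers of $(1-w^2)$ as factors of $A$ and $B$, hence $A\equiv B\equiv 0$. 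Since polynomial identities over $\mathbb{C}$ are universal, \eqref{legendreadd} then holds for every $w\in\mathbb{C}$ with the branch from \eqref{branch}. The main obstacle is precisely this uniqueness step: the classical theorem provides equality only on the branch cut itself rather than on an open set where $\sqrt{1-w^2}$ is single-valued, so the usual identity-theorem approach is not directly available, and the polynomial-squaring reduction is what bridges the gap.
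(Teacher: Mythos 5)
Your proof is correct, but it takes a genuinely different route from the paper's. The paper also starts from the classical addition theorem on $[-1,1]$, but then extends by a function-theoretic argument: both $P_n(\beta(w))$ and the right-hand side $g_n(w)$ are analytic in $\mathbb{C}\setminus[-1,1]$ and flip sign in the same way across the cut, so their difference has zero angular boundary values from the upper half-plane, and the Luzin--Privalov boundary-uniqueness theorem forces the difference to vanish identically on $\mathbb{C}\setminus[-1,1]$. You instead exploit the algebraic structure: both sides lie in $\mathbb{C}[w]+\sqrt{1-w^2}\,\mathbb{C}[w]$, and the uniqueness of that decomposition (your squaring and unique-factorization argument is sound; one can also see it at once by comparing the parity of the multiplicity of the zero at $w=1$ on the two sides of $A^2=(1-w^2)B^2$) upgrades equality on $[-1,1]$ to a pair of polynomial identities, which then hold for every $w\in\mathbb{C}$ under any consistent assignment of $\sqrt{1-w^2}$, in particular under the branch \eqref{branch}. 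Your approach is more elementary, requiring no boundary-value theory, and it makes explicit why the branch \eqref{branch} is compatible with the real case (it restricts to $+\sqrt{1-x^2}$ on the cut, matching $\sin\theta_2\ge 0$); the paper's approach is shorter to state and would also apply in settings where the two sides are not algebraic in $\sqrt{1-w^2}$. Your closing remark correctly identifies the obstruction --- the classical theorem gives equality only on the cut, not on an open set --- and the Luzin--Privalov theorem is precisely the paper's device for overcoming it, whereas the polynomial decomposition is yours.
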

\begin{proof}
	If $w\in [-1, 1]$ is real, we have $\beta(w)=\cos\langle\bs w, \hat{\bs r}\rangle$ and \eqref{legendreadd} is the well-known Legendre addition theorem. Define
	\begin{equation}
	g_n(w)=\frac{4\pi}{2n+1}\sum\limits_{m=-n}^n\widehat P_n^m(\cos\theta)\widehat P_n^m(w)e^{\ri m(\alpha-\varphi)}.
	\end{equation}
	By using the branch defined in \eqref{branch} for the square roots in $\beta(w)$ and $\widehat P_n^m(w)$, $P_n(\beta(w))$ and $g_n(w)$ are extended to the complex plane. Moreover, they are analytic in $\mathbb C\setminus [-1, 1]$ and satisfy limit properties
	\begin{equation*}
	\begin{split}
	\lim\limits_{\epsilon\rightarrow 0^+}P_n(\beta(x+\ri\epsilon))= P_n(\beta(x)),\; \lim\limits_{\epsilon\rightarrow 0^-}=P_n(\beta(x+\ri\epsilon))=-P_n(\beta(x)),\\
	\lim\limits_{\epsilon\rightarrow 0^+}g_n(x+\ri\epsilon)=g_n(x),\quad \lim\limits_{\epsilon\rightarrow 0^-}g_n(x+\ri\epsilon)=-g_n(x)
	\end{split}
	\end{equation*}
	for all $x\in[-1, 1]$. That implies $P_n(\beta(w))-g_n(w)$ takes angular boundary values zero on the set $[-1, 1]$ (taking limit from the upper complex plane). By using Luzin-Privalov theorem \cite{collingwood2004theory}, we conclude that $P_n(\beta(w))-g_n(w)=0$ in $\mathbb C\setminus [-1, 1]$.
\end{proof}

\begin{proposition}\label{prop:Funk-Hecke}
	Given $\bs r=(x, y, z)\in \mathbb R^3$, $k>0$, $\alpha\in[0, 2\pi)$ and denoted by $(r,\theta,\varphi)$ the spherical coordinates of $\bs r$, $\bs k=(\sqrt{k^2-k_z^2}\cos\alpha, \sqrt{k^2-k_z^2}\sin\alpha, k_z)$ is a vector of complex entries. Choosing the branch \eqref{branch} for $\sqrt{k^2-k_z^2}$ in $e^{\ri \bs k\cdot{\bs r}}$ and $\widehat P_n^m(\frac{k_z}{k})$, then
	\begin{equation}\label{extfunkhecke}
	e^{\ri \bs k\cdot{\bs r}}=\sum\limits_{n=0}^{\infty}\sum\limits_{m=-n}^n \overline{A_{n}^m(\bs r)}\ri^n\widehat{P}_n^m\Big(\frac{k_z}{k}\Big)e^{\ri m\alpha}=\sum\limits_{n=0}^{\infty}\sum\limits_{m=-n}^n A_{n}^m(\bs r)\ri^n\widehat{P}_n^m\Big(\frac{k_z}{k}\Big)e^{-\ri m\alpha},
	\end{equation}
	holds for all $k_z\in\mathbb C$, where
	$$A_{n}^m(\bs r)=4\pi j_n(kr)Y_n^m(\theta,\varphi).$$
\end{proposition}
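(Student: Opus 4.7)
The plan is to reduce the proposition to the two preceding lemmas by rewriting the phase $\bs k\cdot\bs r$ in the form $kr\,\beta(k_z/k)$, where $\beta$ is the function introduced in Lemma \ref{lemma2}. First I would verify directly, using $\bs r=(r\sin\theta\cos\varphi,r\sin\theta\sin\varphi,r\cos\theta)$ and the definition of $\bs k$, that
\[
\bs k\cdot\bs r = \sqrt{k^2-k_z^2}\,r\sin\theta\cos(\alpha-\varphi)+k_z r\cos\theta = kr\,\beta(k_z/k),
\]
understanding that the same branch \eqref{branch} is used for $\sqrt{1-(k_z/k)^2}$ on both sides.

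Second, since $a:=kr\ge 0$ is real, Lemma \ref{lemma1} applies with complex argument $z=\beta(k_z/k)$ and yields
\[
e^{\ri\bs k\cdot\bs r}=\sum_{n=0}^{\infty}(2n+1)\,\ri^n j_n(kr)\,P_n\!\bigl(\beta(k_z/k)\bigr).
\]
Third, Lemma \ref{lemma2} with $w=k_z/k$ expands each Legendre polynomial as
\[
P_n\!\bigl(\beta(k_z/k)\bigr)=\frac{4\pi}{2n+1}\sum_{m=-n}^{n}\widehat P_n^m(\cos\theta)\,\widehat P_n^m(k_z/k)\,e^{\ri m(\alpha-\varphi)}.
\]
Substituting back and using $\overline{Y_n^m(\theta,\varphi)}=\widehat P_n^m(\cos\theta)e^{-\ri m\varphi}$ (valid since $\theta,\varphi$ are real and $\widehat P_n^m(\cos\theta)$ is real), the factors $(2n+1)$ cancel and I obtain
\[
e^{\ri\bs k\cdot\bs r}=\sum_{n=0}^{\infty}\sum_{m=-n}^{n} 4\pi\,\ri^n j_n(kr)\,\overline{Y_n^m(\theta,\varphi)}\,\widehat P_n^m(k_z/k)\,e^{\ri m\alpha},
\]
which is the first equality after recognizing $\overline{A_n^m(\bs r)}=4\pi j_n(kr)\overline{Y_n^m(\theta,\varphi)}$. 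The second equality then follows from the re-indexing $m\mapsto -m$ together with the parities $\widehat P_n^{-m}(z)=(-1)^m\widehat P_n^m(z)$ (valid for the chosen branch and noted in the excerpt) and $Y_n^{-m}(\theta,\varphi)=(-1)^m\overline{Y_n^m(\theta,\varphi)}$, whose two sign factors cancel.

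The main delicate point, which I would treat carefully, is branch consistency: the square root appearing in $\bs k\cdot\bs r$, the one implicit in $\beta(w)$, and the one used in $\widehat P_n^m(k_z/k)$ for odd $m$ via \eqref{extensionmodd} must all be the common branch \eqref{branch}, so that the identity $\bs k\cdot\bs r=kr\,\beta(k_z/k)$ and the hypotheses of Lemma \ref{lemma2} remain compatible for every $k_z\in\mathbb C$. Once this book-keeping is arranged, the convergence bound already established in the proof of Lemma \ref{lemma1} (with $a=kr$ and $z=\beta(k_z/k)$) delivers uniform absolute convergence on compact subsets of $k_z$, which legitimizes the interchange of the $n$- and $m$-summations used above and completes the argument.
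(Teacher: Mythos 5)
Your proposal is correct and follows the same route as the paper's proof: rewrite $\bs k\cdot\bs r$ as $kr\,\beta(k_z/k)$, apply Lemma \ref{lemma1} with $a=kr$ and $z=\beta$, and then expand each $P_n(\beta)$ via Lemma \ref{lemma2}. Your added care about branch consistency and the justification for rearranging the sums are sound elaborations of details the paper leaves implicit.
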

\begin{proof}
	Define
	$$\beta=\frac{k_z}{k}\cos\theta+\sqrt{1-\frac{k_z^2}{k^2}}\sin\theta\cos(\alpha-\varphi),$$
	then, $kr\beta=\bs k\cdot\bs r$. Let $a=kr$,  $z=\beta$ in \eqref{planewaveexp}, we have
	\begin{equation}
	e^{\ri \bs k\cdot\bs r}=\sum\limits_{n=0}^{\infty}(2n+1)\ri^nj_n(kr)P_n(\beta).
	\end{equation}
	Finally, the extension of Funk-Hecke formula \eqref{extfunkhecke} follows by applying Lemma \ref{lemma2}.
\end{proof}

{
To show the convergence rate of the expansion \eqref{extfunkhecke}, we consider three different cases with $\bs r=(0.3, 0.4, z)$, $\alpha=\frac{\pi}{4}$.
%The relative errors defined by
%\begin{equation}
%err=\frac{\Big|e^{\ri\bs k\cdot\bs r}-\sum\limits_{n=0}^{p}\sum\limits_{m=-n}^n \overline{A_{n}^m(\bs r)}\ri^n\widehat{P}_n^m\Big(\frac{k_z}{k}\Big)e^{\ri m\alpha}\Big|}{|e^{\ri\bs k\cdot\bs r}|},
%\end{equation}
The convergence rates against $p$ are depicted in Fig. \ref{FunkHeckeApperr} for the relative  $\ell_2$ error. Spectral convergence rates against $p$
are observed while the expansion converges more slowly if one of $|\bs k|$, $|\bs r|$ and $|k_z|$ gets larger.
\begin{figure}[ht!]
	\centering
	\subfigure[$z=0.5$, $k_z=1+\ri$]{\includegraphics[scale=0.23]{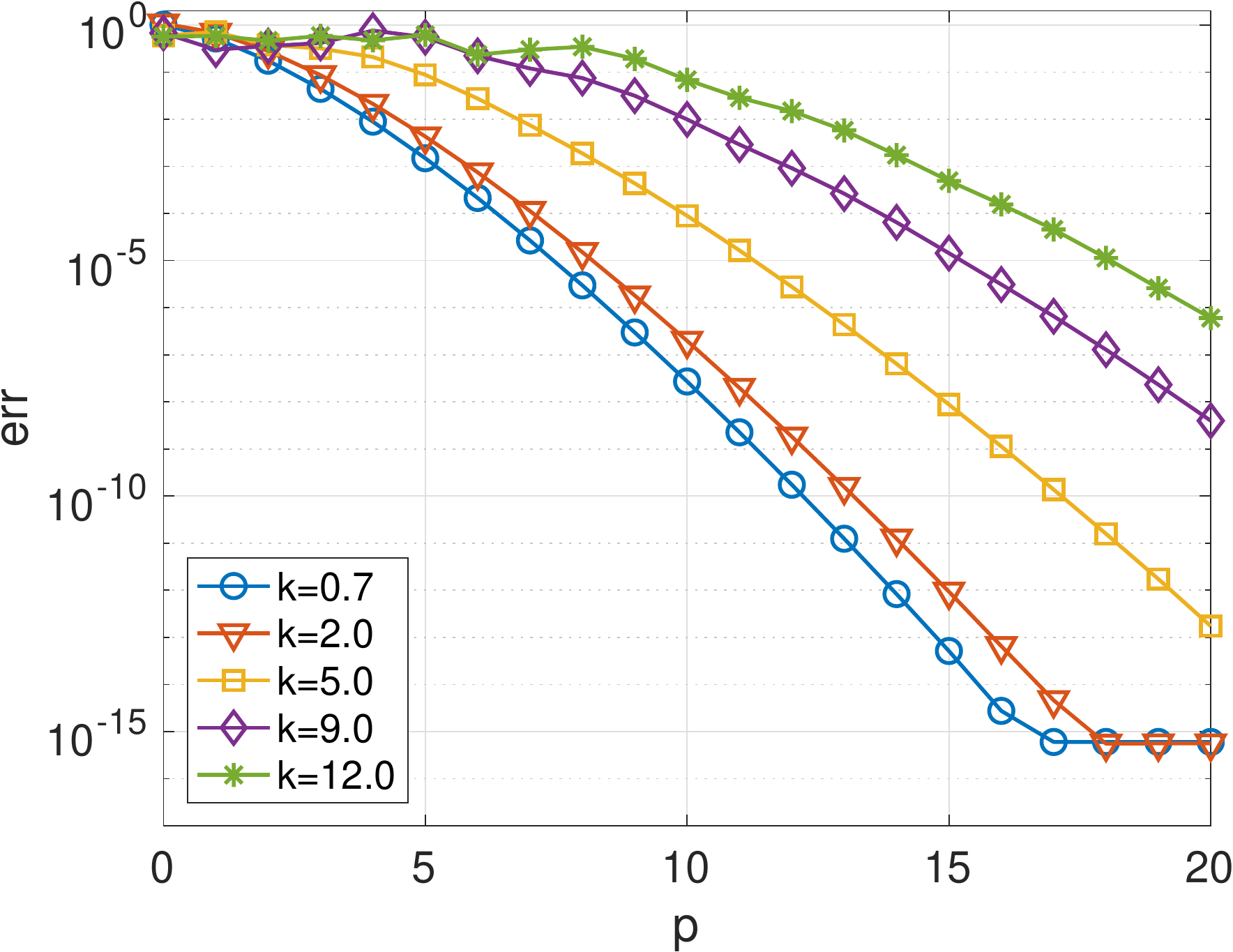}}
	\subfigure[$k=1.8$, $k_z=1-1.3\ri$]{\includegraphics[scale=0.23]{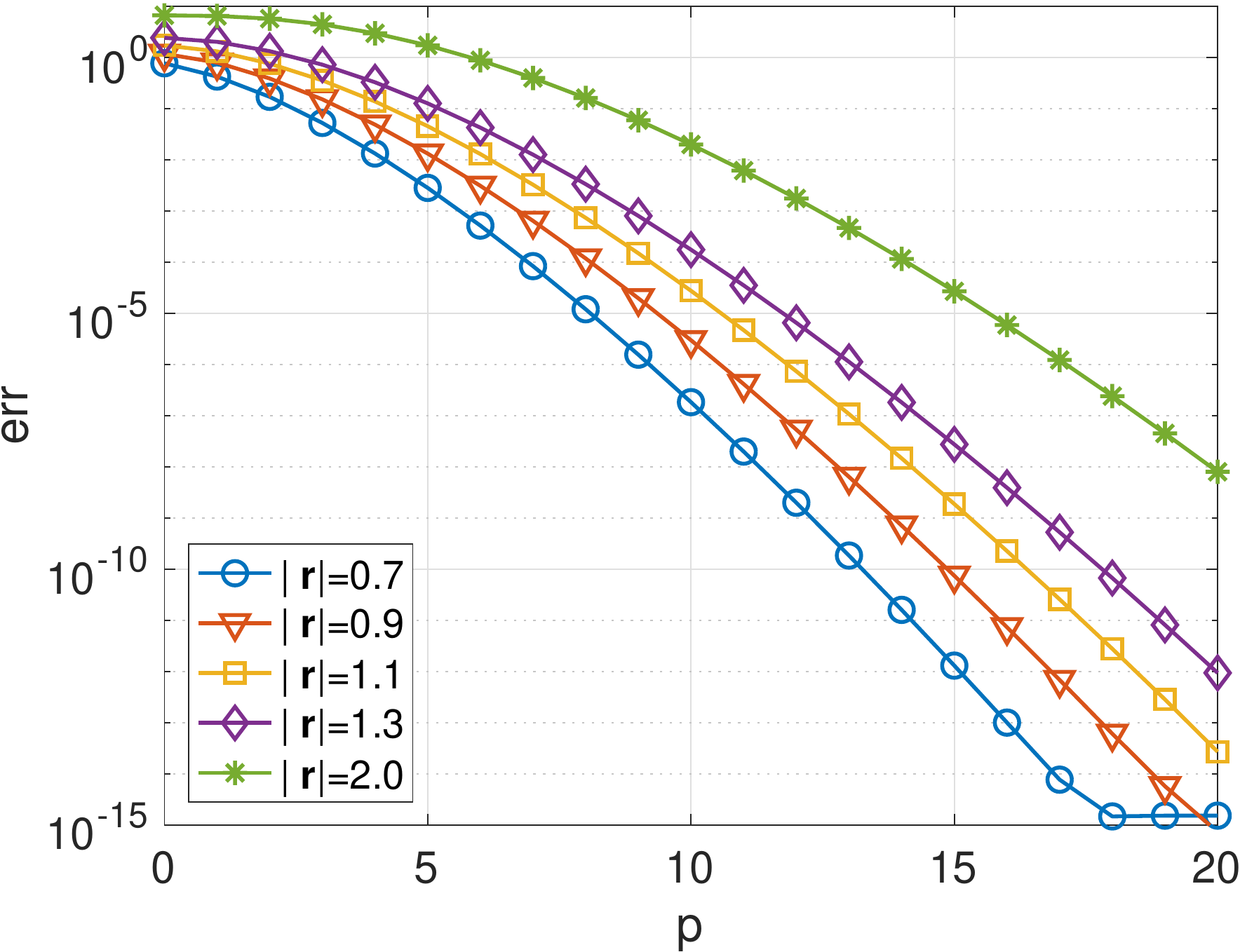}}
	\subfigure[$z=0.5$, $k=1.8$]{\includegraphics[scale=0.23]{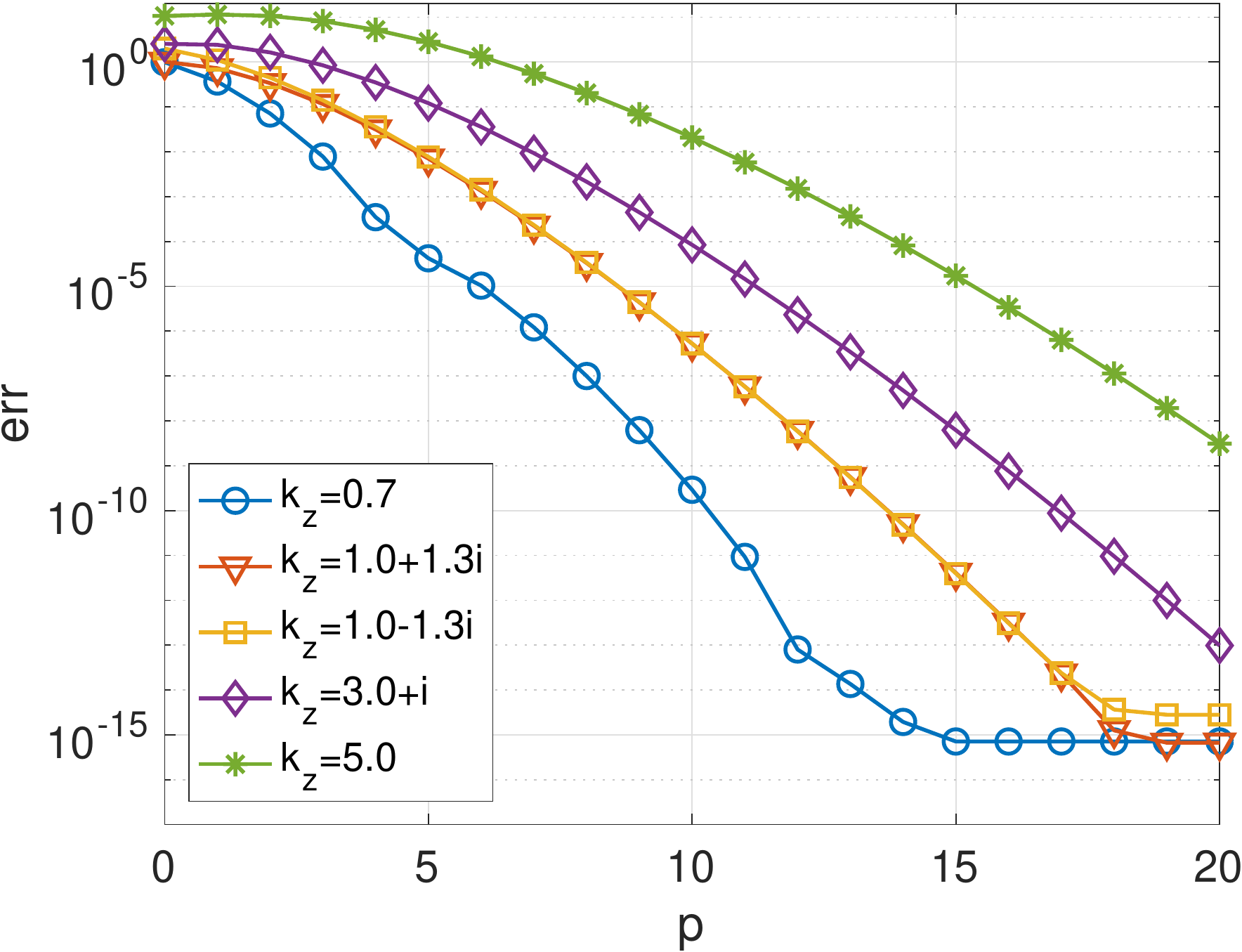}}
	\caption{Convergence rate of the approximation using extended Funk-Hecke formula.}
	\label{FunkHeckeApperr}%
\end{figure}
}

Applying the spherical harmonic expansion \eqref{extfunkhecke} to exponential functions $e^{-\ri \bs k\cdot(\bs r'-\bs r_c^s)}$ and $e^{\ri \bs k\cdot(\bs r-\bs r_c^t)}$ in \eqref{positivecase} gives
\begin{equation}\label{meinspectraldomain}
h^{(1)}_0(k|\bs r-\bs r'|)=\sum\limits_{n=0}^{\infty}\sum\limits_{m=-n}^{n}\frac{M_{nm}}{2k\pi}\int_0^{\infty}\int_0^{2\pi}k_{\rho}\frac{e^{\ri \bs k\cdot(\bs r-\bs r_c^s)}}{ k_z}(-\ri)^n\widehat{P}_n^m\Big(\frac{k_z}{k}\Big)e^{\ri m\alpha}d\alpha  dk_{\rho},
\end{equation}
and
\begin{equation}\label{leinspectraldomain}
h^{(1)}_0(k|\bs r-\bs r'|)=\sum\limits_{n=0}^{\infty}\sum\limits_{m=-n}^{n}\hat L_{nm}j_n(kr_t)Y_n^m(\theta_t,\varphi_t),
\end{equation}
for $z\geq z'$, where $M_{nm}$ is defined in \eqref{melecoeffree} and
\begin{equation}
\hat L_{nm}=\frac{2\ri^n}{k}\int_0^{\infty}\int_0^{2\pi}k_{\rho}\frac{e^{\ri \bs k\cdot(\bs r_c^t-\bs r')}}{ k_z}\widehat{P}_n^m\Big(\frac{k_z}{k}\Big)e^{-\ri m\alpha}d\alpha  dk_{\rho}.
\end{equation}
For the convergence of the Sommerfeld-type integrals in the above expansions, we only consider centers such that their $z$-coordinates satisfy $z_c^s<z$ and $z_c^t>z'$.
Recall the identity
\begin{equation}\label{wavefunspectralform}
h_n^{(1)}(k|\bs r|)Y_n^m(\theta,\varphi)=\frac{1}{2k\pi}\int_0^{\infty}\int_0^{2\pi}k_{\rho}\frac{e^{\ri \bs k\cdot\bs r}}{ k_z}(-\ri)^n\widehat P_n^{m}\Big(\frac{k_z}{k}\Big)e^{\ri m\alpha} d\alpha dk_{\rho}
\end{equation}
for $z\geq 0$, we see that \eqref{meinspectraldomain} and \eqref{leinspectraldomain} are exactly the $h$-expansion \eqref{freespace3dmulexp} and $j$-expansion \eqref{freespace3dlocexp} for the case of $z\geq z'$.

To derive the translation from the $h$-expansion \eqref{meinspectraldomain} to the $j$-expansion \eqref{leinspectraldomain}, we perform a further splitting in \eqref{meinspectraldomain}
\begin{equation}
e^{\ri \bs k\cdot(\bs r-\bs r_c^s)}=e^{\ri \bs k\cdot(\bs r-\bs r_c^t)}e^{\ri \bs k\cdot(\bs r_c^t-\bs r_c^s)}
\end{equation}
 and apply expansion \eqref{extfunkhecke} to obtain the following translation
\begin{equation*}
\begin{split}
L_{nm}=&\frac{2}{ k}\sum\limits_{\nu=0}^{\infty}\sum\limits_{\mu=-\nu}^{\nu}M_{\nu\mu}\int_0^{\infty}\int_0^{2\pi}k_{\rho}\frac{e^{\ri \bs k(\bs r_c^t-\bs r_c^s)}}{k_z} (-1)^{\nu}\ri^{n+\nu}\widehat P_{\nu}^{\mu}\Big(\frac{k_z}{k}\Big)\widehat P_{n}^{m}\Big(\frac{k_z}{k}\Big)e^{\ri(\mu-m)\alpha}d\alpha  dk_{\rho},
\end{split}
\end{equation*}
which implies an integral representation of $S_{n\nu}^{m\mu}(\bs r_c^t-\bs r_c^s)$ in \eqref{metole}. {In order to ensure the convergence of the Sommerfeld-type integral in the translation operator, the $z$-coordinates of the centers are also assumed to satisfy $z_c^t>z_c^s$. }

\section{FMM for 3-D Helmholtz equation in layered media}
In this section, the multipole and local expansions for the reaction components of layered media Green's function of 3-D Helmholtz equation will be derived by using the techniques introduced in the last section. Based on these expansions and relevant translation operators, FMM for 3-D Helmholtz Green's function in layered media can be developed.

\subsection{Green's function of Helmholtz equation in layered media} {Let us first review the integral representation of the layered Green's function derived in \cite{wang2018}}. Consider a layered medium consisting of $L$ interfaces located at $z=d_{\ell
},\ell=0,1,\cdots,L-1$ as shown in Fig. \ref{layerstructure}.
\begin{figure}[ht!]\label{layerstructure}
	\centering
	\includegraphics[scale=0.8]{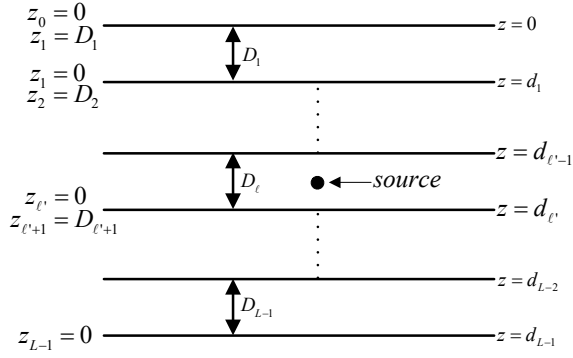}
	\caption{Sketch of the layer structure for general multi-layer media.}
\end{figure}
A point
source at $\boldsymbol{r}^{\prime}=(x^{\prime},y^{\prime},z^{\prime})$ is located in the
$\ell^{\prime}$-th layer ($d_{\ell^{\prime}}<z^{\prime}<d_{\ell^{\prime}-1}$).
The layered Green's function for the Helmholtz equation satisfies
\begin{equation}
\boldsymbol{\Delta}u(\boldsymbol{r},\boldsymbol{r}^{\prime
})+k_{\ell}^{2}u(\boldsymbol{r},\boldsymbol{r}^{\prime
})=-\delta(\boldsymbol{r},\boldsymbol{r}^{\prime}),
\end{equation}
at a field point $\boldsymbol{r}=(x,y,z)$ in the $\ell$-th layer ($d_{\ell
}<z<d_{\ell}-1$) where $\delta(\boldsymbol{r},\boldsymbol{r}^{\prime})$ is the
Dirac delta function and $k_{\ell}$ is the wave number in the $\ell$-th layer.
The system can be solved analytically in the Fourier $(k_x, k_y)$-domain for each layer in $z$ by imposing
transmission conditions at the interface between $\ell$-th and $(\ell-1)$-th
layer ($z=d_{\ell-1})$ as well as the decay conditions in the top and bottom-most layers as
$z\rightarrow\pm\infty$ \cite{cho2012parallel}.
Inside a given layer, say, the $\ell$-th layer, the solution has the form
\begin{equation}\label{domaingreenfn}
u(\bs r,\bs r')=\begin{cases}
\displaystyle u_{\ell'\ell'}^{\rm r}(\bs r, \bs r')+\frac{e^{\ri k_{\ell'}|\bs r-\bs r'|}}{4\pi|\bs r-\bs r'|}, & d_{\ell'}<z<d_{\ell'-1}, \\
\displaystyle u_{\ell\ell'}^{\rm r}(\bs r, \bs r'), & d_{\ell}<z<d_{\ell-1}\;{\rm and}\; \ell\neq \ell',
\end{cases}
\end{equation}
where
\begin{equation}\label{generalreact}
u_{\ell\ell'}^{\rm r}(\bs r, \bs r')=\begin{cases}
\displaystyle u_{0\ell'}^{\uparrow}(\bs r,\bs r'), & \ell =0,\\
\displaystyle u_{\ell\ell'}^{\uparrow}(\bs r,\bs r')+u_{\ell\ell'}^{\downarrow}(\bs r,\bs r'), & 0<\ell<L,\\
\displaystyle u_{L\ell'}^{\downarrow}(\bs r,\bs r'), &\ell=L,
\end{cases}
\end{equation}
is called the reaction field in the $\ell$-th layer due to wave reflections and transmissions by the layered media. We can see that the reaction field $u_{\ell\ell'}^{\rm r}(\bs r, \bs r')$ has up-going and down-going components inside intermediate layers $(0<\ell<L)$. Only up-going or down-going component is required in the top- and bottom-most layers, respectively. The up- and down-going components have Sommerfeld integral representations
\begin{equation}\label{greenfuncomponent}
\begin{split}
u_{\ell\ell'}^{\uparrow}(\bs r, \bs r')=&\frac{\ri}{8\pi^2 }\int_0^{\infty}\int_0^{2\pi}k_{\rho}e^{\ri\bs k_{\alpha}\cdot(\bs\rho-\bs\rho')}\frac{e^{\ri k_{\ell z} (z-d_{\ell})}}{k_{\ell z}}\psi_{\ell\ell'}^{\uparrow}(k_{\rho}, z')d\alpha dk_{\rho},\quad \ell<L,\\
 u_{\ell\ell'}^{\downarrow}(\bs r, \bs r')=&\frac{\ri}{8\pi^2}\int_0^{\infty}\int_0^{2\pi}k_{\rho}e^{\ri\bs k_{\alpha}\cdot(\bs\rho-\bs\rho')}\frac{e^{\ri k_{\ell z} (d_{\ell-1}-z)}}{k_{\ell z}}\psi_{\ell\ell'}^{\downarrow}(k_{\rho}, z')d\alpha dk_{\rho},\quad \ell>0,
\end{split}
\end{equation}
where $\bs k_{\alpha}=(k_{\rho} \cos\alpha,k_{\rho} \sin\alpha)=(k_x, k_y)$, $\bs\rho=(x, y)$, $\bs \rho'=(x', y')$, $k_{\ell z}=\sqrt{k_{\ell}^2-k_{\rho}^2}$,
\begin{equation}\label{totaldensity}
\begin{cases}
\displaystyle \psi_{\ell0}^{\uparrow}(k_{\rho}, z')=e^{\ri k_{0 z}(z'-d_0)}\sigma_{\ell\ell'}^{\uparrow\uparrow}(k_{\rho}),\\[8pt]
\displaystyle \psi_{\ell\ell'}^{\uparrow}(k_{\rho}, z')=e^{\ri k_{\ell' z}(z'-d_{\ell'})}\sigma_{\ell\ell'}^{\uparrow\uparrow}(k_{\rho})+e^{\ri k_{\ell' z}(d_{\ell'-1}-z')}\sigma_{\ell\ell'}^{\uparrow\downarrow}(k_{\rho}),\quad 0<\ell'<L,\\[8pt]
\displaystyle \psi_{\ell\ell'}^{\downarrow}(k_{\rho}, z')=e^{\ri k_{\ell' z}(z'-d_{\ell'})}\sigma_{\ell\ell'}^{\downarrow\uparrow}(k_{\rho})+e^{\ri k_{\ell' z}(d_{\ell'-1}-z')}\sigma_{\ell\ell'}^{\downarrow\downarrow}(k_{\rho}),\quad 0<\ell'<L,\\[8pt]
\displaystyle \psi_{\ell L}^{\downarrow}(k_{\rho}, z')=e^{\ri k_{\ell' z}(d_{L-1}-z')}\sigma_{\ell L}^{\downarrow\downarrow}(k_{\rho}).
\end{cases}
\end{equation}

The reaction densities $\sigma_{\ell\ell'}^{\uparrow\uparrow}(k_{\rho}), \sigma_{\ell\ell'}^{\uparrow\downarrow}(k_{\rho}), \sigma_{\ell\ell'}^{\downarrow\uparrow}(k_{\rho}), \sigma_{\ell\ell'}^{\downarrow\downarrow}(k_{\rho})$ only depend on the layer structure and wave numbers.
Equation \eqref{greenfuncomponent}-\eqref{totaldensity} are general formulas which are applicable to multi-layered media. Here, we give explicit formulas for the reaction densities of the three layers media with source in the middle layer, i.e.,
\begin{equation*}
\begin{split}
\sigma_{01}^{\uparrow\uparrow}(k_{\rho})&=\frac{k_1k_{0z}(k_1k_{1z}-k_2k_{2z})e^{-\ri d_1k_{1z}}}{k_0k_{0z}\kappa_{11}-\ri k_1k_{1z}\kappa_{12}},\quad \sigma_{01}^{\uparrow\downarrow}(k_{\rho})=\frac{k_1k_{0z}k_1k_{1z}+k_2k_{2z}}{k_0k_{0z}\kappa_{11}-\ri k_1k_{1z}\kappa_{12}},\\
\sigma_{11}^{\uparrow\uparrow}(k_{\rho})&=\frac{(k_1k_{1z}-k_2k_{2z})(k_1k_{1z}+k_0k_{0z})}{2(k_0k_{0z}\kappa_{11}-\ri k_1k_{1z}\kappa_{12})},\;\sigma_{11}^{\downarrow\downarrow}(k_{\rho})=\frac{(k_1k_{1z}-k_0k_{0z})(k_1k_{1z}+k_2k_{2z})}{2(k_0k_{0z}\kappa_{11}-\ri k_1k_{1z}\kappa_{12})},\\ \sigma_{11}^{\uparrow\downarrow}(k_{\rho})&=\frac{k_1k_{1z}-k_2k_{2z}}{k_0k_{0z}\kappa_{11}-\ri k_1k_{1z}\kappa_{12}}\frac{k_1k_{1z}-k_0k_{0z}}{2}e^{-\ri d_1k_{1z}},\\
\sigma_{11}^{\downarrow\uparrow}(k_{\rho})&=\frac{k_1k_{1z}-k_0k_{0z}}{k_0k_{0z}\kappa_{11}-\ri k_1k_{1z}\kappa_{12}}\frac{k_1k_{1z}-k_2k_{2z}}{2}e^{-\ri d_1k_{1z}}\\
\sigma_{21}^{\downarrow\uparrow}(k_{\rho})&=\frac{k_1 k_{2z}(k_1k_{1z}+k_0k_{0z})}{k_0k_{0z}\kappa_{11}-\ri k_1k_{1z}\kappa_{12}},\quad \sigma_{21}^{\downarrow\downarrow}(k_{\rho})=\frac{k_1 k_{2z}(k_1k_{1z}-k_0k_{0z})e^{-\ri d_1k_{1z}}}{k_0k_{0z}\kappa_{11}-\ri k_1k_{1z}\kappa_{12}}.
\end{split}
\end{equation*}
where
\begin{equation*}
\begin{split}
&\kappa_{11}=\frac{k_1k_{1z}-k_2k_{2z}}{2}e^{-2\ri d_1k_{1z}}+\frac{k_1k_{1z}+k_2k_{2z}}{2}, \\ &\kappa_{12}=\ri\Big(\frac{k_2k_{2z}-k_1k_{1z}}{2}e^{-2\ri d_1k_{1z}}+\frac{k_1k_{1z}+k_2k_{2z}}{2}\Big).
\end{split}
\end{equation*}
 Detailed derivation of the general formulas \eqref{generalreact}-\eqref{totaldensity} and corresponding reaction densities for Green's function in layered media can be found in \cite{wang2018}. {Although the density functions depend on the configuration of the layered media, they can be calculated at run time at any given point by	solving one $2\times2$ linear system and multiplying the results by some $2\times2$ matrices.}

\subsection{Multipole and local expansions for general reaction component} Consider the reaction field in the middle layers, i.e.,
\begin{equation}
u_{\ell\ell'}^{\rm r}(\bs r, \bs r')=u_{\ell\ell'}^{\uparrow}(\bs r, \bs r')+u_{\ell\ell'}^{\downarrow}(\bs r, \bs r').
\end{equation}
Define
\begin{equation}\label{mekernel}
\begin{split}
&\mathcal E_{\ell\ell'}^{\uparrow\uparrow}(\bs r, \bs r'):=e^{\ri\bs k_{\alpha}\cdot(\bs\rho-\bs\rho')+\ri k_{\ell z} (z-d_{\ell})+\ri k_{\ell' z}(z'-d_{\ell'})},\\
&\mathcal E_{\ell\ell'}^{\uparrow\downarrow}(\bs r, \bs r'):=e^{\ri\bs k_{\alpha}\cdot(\bs\rho-\bs\rho')+\ri k_{\ell z} (z-d_{\ell})+\ri k_{\ell' z}(d_{\ell'-1}-z')},\\
&\mathcal E_{\ell\ell'}^{\downarrow\uparrow}(\bs r, \bs r'):=e^{\ri\bs k_{\alpha}\cdot(\bs\rho-\bs\rho')+\ri k_{\ell z} (d_{\ell-1}-z)+\ri k_{\ell' z}(z'-d_{\ell'})},\\
&\mathcal E_{\ell\ell'}^{\downarrow\downarrow}(\bs r, \bs r'):=e^{\ri\bs k_{\alpha}\cdot(\bs\rho-\bs\rho')+\ri k_{\ell z} (d_{\ell-1}-z)+\ri k_{\ell' z}(d_{\ell'-1}-z')}.
\end{split}
\end{equation}
By expressions in \eqref{greenfuncomponent} and \eqref{totaldensity} we have a further decomposition
\begin{equation}
u_{\ell\ell'}^{\uparrow}(\bs r, \bs r')=u_{\ell\ell'}^{\uparrow\uparrow}(\bs r, \bs r')+u_{\ell\ell'}^{\uparrow\downarrow}(\bs r, \bs r'),\quad u_{\ell\ell'}^{\downarrow}(\bs r, \bs r')=u_{\ell\ell'}^{\downarrow\uparrow}(\bs r, \bs r')+u_{\ell\ell'}^{\downarrow\downarrow}(\bs r, \bs r'),
\end{equation}
and each component has a Sommerfeld-type integral representation:
\begin{equation}\label{generalcomponents}
u_{\ell\ell'}^{**}(\bs r, \bs r')=\frac{\ri}{8\pi^2 }\int_0^{\infty}\int_0^{2\pi}k_{\rho}\frac{\mathcal E_{\ell\ell'}^{**}(\bs r, \bs r')\sigma_{\ell\ell'}^{**}(k_{\rho})}{k_{\ell z}}d\alpha dk_{\rho}.
\end{equation}
Here and in the rest of this paper, $*$ stands for any one of arrows $\uparrow$, $\downarrow$, e.g., $u_{\ell\ell'}^{**}$ can be any of the four reaction components $u_{\ell\ell'}^{\uparrow\uparrow}, u_{\ell\ell'}^{\uparrow\downarrow}, u_{\ell\ell'}^{\downarrow\uparrow}, u_{\ell\ell'}^{\downarrow\downarrow}$. Note that the source and target coordinates are only involved in the exponential functions $\mathcal E_{\ell\ell'}^{**}(\bs r, \bs r')$. It is easy to make the following source-target separations
\begin{equation}\label{sourcetargetseparationsc}
\begin{split}
\mathcal E_{\ell\ell'}^{*\uparrow}(\bs r, \bs r')&=\mathcal E_{\ell\ell'}^{*\uparrow}(\bs r, \bs r_c^s)e^{\ri\bs k_{\alpha}\cdot(\bs\rho_c^s-\bs\rho')+\ri k_{\ell' z}(z'-z_c^s)},\\
\mathcal E_{\ell\ell'}^{*\downarrow}(\bs r, \bs r')&=\mathcal E_{\ell\ell'}^{*\downarrow}(\bs r, \bs r_c^s)e^{\ri\bs k_{\alpha}\cdot(\bs\rho_c^s-\bs\rho')-\ri k_{\ell' z}(z'-z_c^s)}
\end{split}
\end{equation}
by inserting the source center $\bs r_c^s$, and
\begin{equation}\label{sourcetargetseparationtc}
\begin{split}
\mathcal E_{\ell\ell'}^{\uparrow*}(\bs r, \bs r')&=\mathcal E_{\ell\ell'}^{\uparrow*}(\bs r_c^t, \bs r')e^{\ri\bs k_{\alpha}\cdot(\bs\rho-\bs\rho_c^t)+\ri k_{\ell z}(z-z_c^t)},\\
\mathcal E_{\ell\ell'}^{\downarrow*}(\bs r, \bs r')&=\mathcal E_{\ell\ell'}^{\downarrow*}(\bs r_c^t, \bs r')e^{\ri\bs k_{\alpha}\cdot(\bs\rho-\bs\rho_c^t)-\ri k_{\ell z}(z-z_c^t)}
\end{split}
\end{equation}
by inserting the target center $\bs r_c^t$. Here $\bs\rho_c^s=(x_c^s, y_c^s)$, $\bs\rho_c^t=(x_c^t, y_c^t)$ are the $x, y$-coordinates of the centers $\bs r_c^s$ and $\bs r_c^t$.
Moreover, Funk-Hecke formula \eqref{extfunkhecke} gives spherical harmonic expansions for the following plane waves:
\begin{equation*}
\begin{split}
e^{\ri\bs k_{\alpha}\cdot(\bs\rho_c^s-\bs\rho')\pm\ri k_{\ell' z}(z'-z_c^s)}&=4\pi\sum\limits_{n=0}^{\infty}\sum\limits_{m=-n}^{n} (-\ri)^n(\mp 1)^{n+m}j_n(k_{\ell'}r_s')\overline{Y_n^{m}(\theta_s',\varphi_s')}\widehat{P}_n^m\Big(\frac{k_{\ell'z}}{k_{\ell'}}\Big)e^{\ri m\alpha},\\
e^{\ri\bs k_{\alpha}\cdot(\bs\rho-\bs\rho_c^t)\pm\ri k_{\ell z}(z-z_c^t)}&=4\pi\sum\limits_{n=0}^{\infty}\sum\limits_{m=-n}^{n} \ri^n(\pm 1)^{n+m}j_n(k_{\ell}r_t)Y_n^{m}(\theta_t,\varphi_t)\widehat{P}_n^m\Big(\frac{k_{\ell z}}{k_{\ell}}\Big)e^{-\ri m\alpha},
\end{split}
\end{equation*}
where the fact that $Y_n^{m}(\pi-\theta,\varphi)=(-1)^{n+m}Y_n^{m}(\theta,\varphi)$ is used. Applying these expansions and source-target separation \eqref{sourcetargetseparationsc} and \eqref{sourcetargetseparationtc}, we obtain
\begin{equation*}
\begin{split}
\mathcal E_{\ell\ell'}^{*\uparrow}(\bs r, \bs r')&=\mathcal E_{\ell\ell'}^{*\uparrow}(\bs r, \bs r_c^s)\sum\limits_{n=0}^{\infty}\sum\limits_{m=-n}^{n}4\pi j_n(k_{\ell'}r_s')\overline{Y_n^{m}(\theta_s',\varphi_s')} (- 1)^{m}\ri^n\widehat{P}_n^m\Big(\frac{k_{\ell'z}}{k_{\ell'}}\Big)e^{\ri m\alpha},\\
\mathcal E_{\ell\ell'}^{*\downarrow}(\bs r, \bs r')&=\mathcal E_{\ell\ell'}^{*\downarrow}(\bs r, \bs r_c^s)\sum\limits_{n=0}^{\infty}\sum\limits_{m=-n}^{n}4\pi j_n(k_{\ell'}r_s')\overline{Y_n^{m}(\theta_s',\varphi_s')}(-1)^n\ri^n\widehat{P}_n^m\Big(\frac{k_{\ell'z}}{k_{\ell'}}\Big)e^{\ri m\alpha},
\end{split}
\end{equation*}
and
\begin{equation*}
\begin{split}
\mathcal E_{\ell\ell'}^{\uparrow*}(\bs r, \bs r')&=\mathcal E_{\ell\ell'}^{\uparrow*}(\bs r_c^t, \bs r')4\pi\sum\limits_{n=0}^{\infty}\sum\limits_{m=-n}^{n} j_n(k_{\ell}r_t)Y_n^{m}(\theta_t,\varphi_t)\ri^n\widehat{P}_n^m\Big(\frac{k_{\ell z}}{k_{\ell}}\Big)e^{-\ri m\alpha},\\
\mathcal E_{\ell\ell'}^{\downarrow*}(\bs r, \bs r')&=\mathcal E_{\ell\ell'}^{\downarrow*}(\bs r_c^t, \bs r')4\pi\sum\limits_{n=0}^{\infty}\sum\limits_{m=-n}^{n} j_n(k_{\ell}r_t)Y_n^{m}(\theta_t,\varphi_t)(- 1)^{n+m}\ri^n\widehat{P}_n^m\Big(\frac{k_{\ell z}}{k_{\ell}}\Big)e^{-\ri m\alpha}.
\end{split}
\end{equation*}
Substituting the above identities into \eqref{generalcomponents}, we obtain the following multipole expansion:
\begin{equation}\label{melayerupgoing}
u_{\ell\ell'}^{**}(\bs r, \bs r')=\sum\limits_{n=0}^{\infty}\sum\limits_{m=-n}^{n}  M_{nm}\mathcal F_{nm}^{**}(\bs r, \bs r_c^s),\quad M_{nm}=4\pi j_n(k_{\ell'}r_s')\overline{Y_n^{m}(\theta_s',\varphi_s')},
\end{equation}
with the expansion functions given by Sommerfeld-type integrals
\begin{equation}\label{expfuncreaction}
\begin{split}
\mathcal F_{nm}^{*\uparrow}(\bs r, \bs r_c^s)=&\frac{\ri}{8\pi^2}\int_0^{\infty}\int_0^{2\pi}k_{\rho}\frac{(-1)^{m}\mathcal E_{\ell\ell'}^{*\uparrow}(\bs r, \bs r_c^s)\sigma_{\ell\ell'}^{*\uparrow}(k_{\rho})}{k_{\ell z}}\ri^{n}\widehat{P}_n^m\Big(\frac{k_{\ell'z}}{k_{\ell'}}\Big)e^{\ri m\alpha}d\alpha dk_{\rho},\\
\mathcal F_{nm}^{*\downarrow}(\bs r, \bs r_c^s)=&\frac{\ri}{8\pi^2}\int_0^{\infty}\int_0^{2\pi}k_{\rho}\frac{(-1)^n\mathcal E_{\ell\ell'}^{*\downarrow}(\bs r, \bs r_c^s)\sigma_{\ell\ell'}^{*\downarrow}(k_{\rho})}{k_{\ell z}}\ri^n\widehat{P}_n^m\Big(\frac{k_{\ell'z}}{k_{\ell'}}\Big)e^{\ri m\alpha}d\alpha dk_{\rho}.
\end{split}
\end{equation}
It is worthy to point out that $\mathcal F_{nm}^{**}(\bs r, \bs r_c^s)$ is only defined for $\bs r$ in the $\ell$-th layer. Therefore, it could be seen as singular function for $\bs r$ outside the $\ell$-th layer as reaction field produced by polarization charges there (more discussion on this issue will be given below). That is why we keep using the ``multipole expansion" for expansion \eqref{melayerupgoing}.
Similarly, we obtain local expansion
\begin{equation}\label{lelayerupgoing}
u_{\ell\ell'}^{**}(\bs r, \bs r')=\sum\limits_{n=0}^{\infty}\sum\limits_{m=-n}^{n} L_{nm}^{**}j_n(k_{\ell}r_t)Y_n^m(\theta_t,\varphi_t),
\end{equation}
with coefficients given by
\begin{equation}\label{lecoeffup}
\begin{split}
L_{nm}^{\uparrow*}=&\frac{\ri}{2\pi }\int_0^{\infty}\int_0^{2\pi}k_{\rho}\frac{\mathcal E_{\ell\ell^{\prime}}^{\uparrow*}(\bs r_c^t, \bs r')\sigma_{\ell\ell^{\prime}}^{\uparrow*}(k_{\rho})}{k_{\ell z}}\ri^n\widehat{P}_n^m\Big(\frac{k_{\ell z}}{k_{\ell}}\Big)e^{-\ri m\alpha}d\alpha dk_{\rho},\\
L_{nm}^{\downarrow*}=&\frac{(-1)^{n+m}\ri}{2\pi }\int_0^{\infty}\int_0^{2\pi}k_{\rho}\frac{\mathcal E_{\ell\ell^{\prime}}^{\downarrow*}(\bs r_c^t, \bs r')\sigma_{\ell\ell^{\prime}}^{\downarrow*}(k_{\rho})}{k_{\ell z}}\ri^n\widehat{P}_n^m\Big(\frac{k_{\ell z}}{k_{\ell}}\Big)e^{-\ri m\alpha}d\alpha dk_{\rho}.
\end{split}
\end{equation}
According to the definition of $\mathcal E_{\ell\ell'}^{**}(\bs r, \bs r_c^s)$ and $\mathcal E_{\ell\ell'}^{**}(\bs r_c^t, \bs r')$ in \eqref{mekernel},  the centers $\bs r_c^s$ and $\bs r_c^t$ have to satisfy
\begin{equation}\label{centercond}
\begin{split}
z_c^s>d_{\ell'}, \; z_c^t>d_{\ell},\quad {\rm for \;}u^{\uparrow\uparrow}_{\ell\ell'}(\bs r, \bs r');\quad z_c^s<d_{\ell'-1}, \; z_c^t<d_{\ell-1},\quad {\rm for \;}u^{\downarrow\downarrow}_{\ell\ell'}(\bs r, \bs r');\\
z_c^s<d_{\ell'-1}, \; z_c^t>d_{\ell},\quad {\rm for \;}u^{\uparrow\downarrow}_{\ell\ell'}(\bs r, \bs r');\quad z_c^s>d_{\ell'}, \; z_c^t<d_{\ell-1},\quad {\rm for \;}u^{\downarrow\uparrow}_{\ell\ell'}(\bs r, \bs r')
\end{split}
\end{equation}
to ensure the exponential decay in $\mathcal E_{\ell\ell'}^{**}(\bs r, \bs r_c^s)$ and $\mathcal E_{\ell\ell'}^{**}(\bs r_c^t, \bs r')$ as $k_{\rho}\rightarrow\infty$ and hence the convergence of the corresponding Sommerfeld-type integrals in \eqref{expfuncreaction} and \eqref{lecoeffup}. These restriction can be met in practice, since we are considering targets in the $\ell$-th layer and sources in the $\ell'$-th layer.

\subsection{Reaction components, associated equivalent polarization sources, and multipole and local expansions}
It is well known that the $h$-expansion \eqref{freespace3dmulexp} and the $j$-expansion \eqref{freespace3dlocexp} for the free space Green's function have convergence rates of order $\mathcal O\Big(\Big(\frac{|\bs r'-\bs r_c^s|}{|\bs r-\bs r_c^s|}\Big)^p\Big)$ and $\mathcal O\Big(\Big(\frac{|\bs r-\bs r_c^t|}{|\bs r_c^t-\bs r'|}\Big)^p\Big)$, respectively. That is both multipole and local expansion converge exponentially as the target moving away from the source. These convergence results are the key for the success of the hierarchical  tree structure design in FMM. However, multipole and local expansions \eqref{melayerupgoing}-\eqref{lecoeffup} for reaction components have a different convergence behavior. According to the convergence analysis for multipole and local expansions of 2-D Green's function of Helmholtz equation in layered media (cf. \cite{zhang2018exponential}), we expect convergence estimates for the ME at $\bs r_c^s$ and the LE at $\bs r_c^t$ for the reaction components
\begin{equation}\label{melayerapp}
\begin{split}
\Big|u_{\ell\ell'}^{**}(\bs r, \bs r')-\sum\limits_{n=0}^{p}\sum\limits_{m=-n}^{n}  M_{nm}\mathcal F_{nm}^{**}(\bs r, \bs r_c^s)\Big|=\mathcal{O}\left(\left(\frac{|\bs r'-\bs r_c^s|}{d^{**}(\bs r, \bs r_c^s)}\right)^p\right),\\
\Big|u_{\ell\ell'}^{**}(\bs r, \bs r')-\sum\limits_{n=0}^{p}\sum\limits_{m=-n}^{n}  L_{nm}^{**}j_n(k_{\ell}r_t)Y_n^m(\theta_t,\varphi_t)\Big|=\mathcal{O}\left(\left(\frac{|\bs r-\bs r_c^t|}{d^{**}(\bs r_c^t, \bs r')}\right)^p\right),
\end{split}
\end{equation}
respectively, where
\begin{equation}\label{redefinedist}
\begin{split}
d^{\uparrow\uparrow}(\bs r, \bs r_c^s)=&\sqrt{(x-x_c^s)^2+(y-y_c^s)^2+(z-d_{\ell}+z_c^s-d_{\ell'})^2},\\
d^{\uparrow\downarrow}(\bs r, \bs r_c^s)=&\sqrt{(x-x_c^s)^2+(y-y_c^s)^2+(z-d_{\ell}+d_{\ell'-1}-z_c^s)^2},\\
d^{\downarrow\uparrow}(\bs r, \bs r_c^s)=&\sqrt{(x-x_c^s)^2+(y-y_c^s)^2+(d_{\ell-1}-z+z_c^s-d_{\ell'})^2},\\
d^{\downarrow\downarrow}(\bs r, \bs r_c^s)=&\sqrt{(x-x_c^s)^2+(y-y_c^s)^2+(d_{\ell-1}-z+d_{\ell'-1}-z_c^s)^2}.
\end{split}
\end{equation}
Suppose $\bs r_c^s$ is a center close to source $\bs r'$ with a fixed distance $|\bs r'-\bs r_c^s|$, \eqref{melayerapp} indicates an important fact that the error of the truncated multipole expansion is not determined by the Euclidean distance between source center $\bs r_c^s$ and target $\bs r$ as in the free space case. Actually, the distances along $z$-direction have been replaced by summations of the distances between $\bs r$, $\bs r_c^s$ and corresponding nearest interfaces of the layered media. Similar conclusion can also be obtained for local expansion \eqref{lelayerupgoing}.

\begin{rem}
	There are two special cases, i.e., $d^{\uparrow\downarrow}(\bs r,\bs r_c^s)=|\bs r-\bs r_c^s|$ if $\bs r$ and $\bs r_c^s$ are in the $\ell$-th and $(\ell+1)$-th layer and $d^{\downarrow\uparrow}(\bs r,\bs r_c^s)=|\bs r-\bs r_c^s|$ if $\bs r$ and $\bs r_c^s$ are in the $\ell$-th and $(\ell-1)$-th layer. Therefore, multipole and local expansions of $u_{\ell\ell+1}^{\uparrow\downarrow}(\bs r, \bs r_c^s)$ and $u_{\ell\ell-1}^{\downarrow\uparrow}(\bs r, \bs r_c^s)$ have the same convergence behavior as that of free space Green's function.
\end{rem}

We will give some numerical examples to show the convergence behavior of the multipole expansions in \eqref{melayerapp}. Consider the multipole expansions of $u_{11}^{\uparrow\uparrow}$ and $u_{11}^{\uparrow\downarrow}$ in a three-layer media with $k_{0}=0.8$, $k_{1}=1.5$, $k_{2}=2.0$, $d_0=0, d_1=-2.0$. In all the following examples, we fix $x=x'=0.625$, $y=y'=0.5$, $x_c^s=1.0,\; y_c^s=0.5$ for the coordinates of the target $\bs r$, source $\bs r'$ and source center $\bs r_c^s$, respectively. Moreover, we will have the distance $|\bs r'-\bs r_c^s|=0.375$ fixed by keeping $z'=z_c^s$.  For both components, we shall test three groups of $z$-coordinates given as follows
\begin{equation*}
\begin{split}
&u_{11}^{\uparrow\uparrow}: z=z'=z_c^s=-1.8; \quad z=-1.8,\; z'=z_c^s=-1.0;\quad z=z'=z_c^s=-0.5;\\
&u_{11}^{\uparrow\downarrow}: z=-1.8,\;z'=z_c^s=-0.2; \; z=-1.8,\; z'=z_c^s=-1.0;\; z=-0.5,\;z'=z_c^s=-1.5.
\end{split}
\end{equation*}
The relative errors against truncation number $p$ are depicted in Fig. \ref{meconvergence}. { We also use a linear polynomial to fit the value $a$ in the expected exponential convergence $\mathcal{O}(a^p)$ via
$$\log err=p\log a+C.$$
The fitted values of $a$ for all cases in Fig. \ref{meconvergence} are given in the captions of the sub-figures.} The results clearly show that multipole expansion of $u_{11}^{**}$ converges faster as the distance $d^{**}(\bs r, \bs r')$ rather than Euclidean distance $|\bs r-\bs r'|$ increases. We also see that multipole expansion of $u_{11}^{\uparrow\downarrow}$ converges much faster than that of $u_{11}^{\uparrow\uparrow}$ as $d^{\uparrow\downarrow}(\bs r, \bs r_c^s)=d^{\uparrow\uparrow}(\bs r, \bs r_c^s)$. This can be explained by the extra exponential decay term inside the density $\sigma_{11}^{\uparrow\downarrow}(k_{\rho})$, see Fig. \ref{deformedcontour}.
\begin{figure}[ht!]
	\center
	\subfigure[$u_{11}^{\uparrow\uparrow}(\bs r, \bs r')$ and $a=0.63,    0.25,   0.10$]{\includegraphics[scale=0.35]{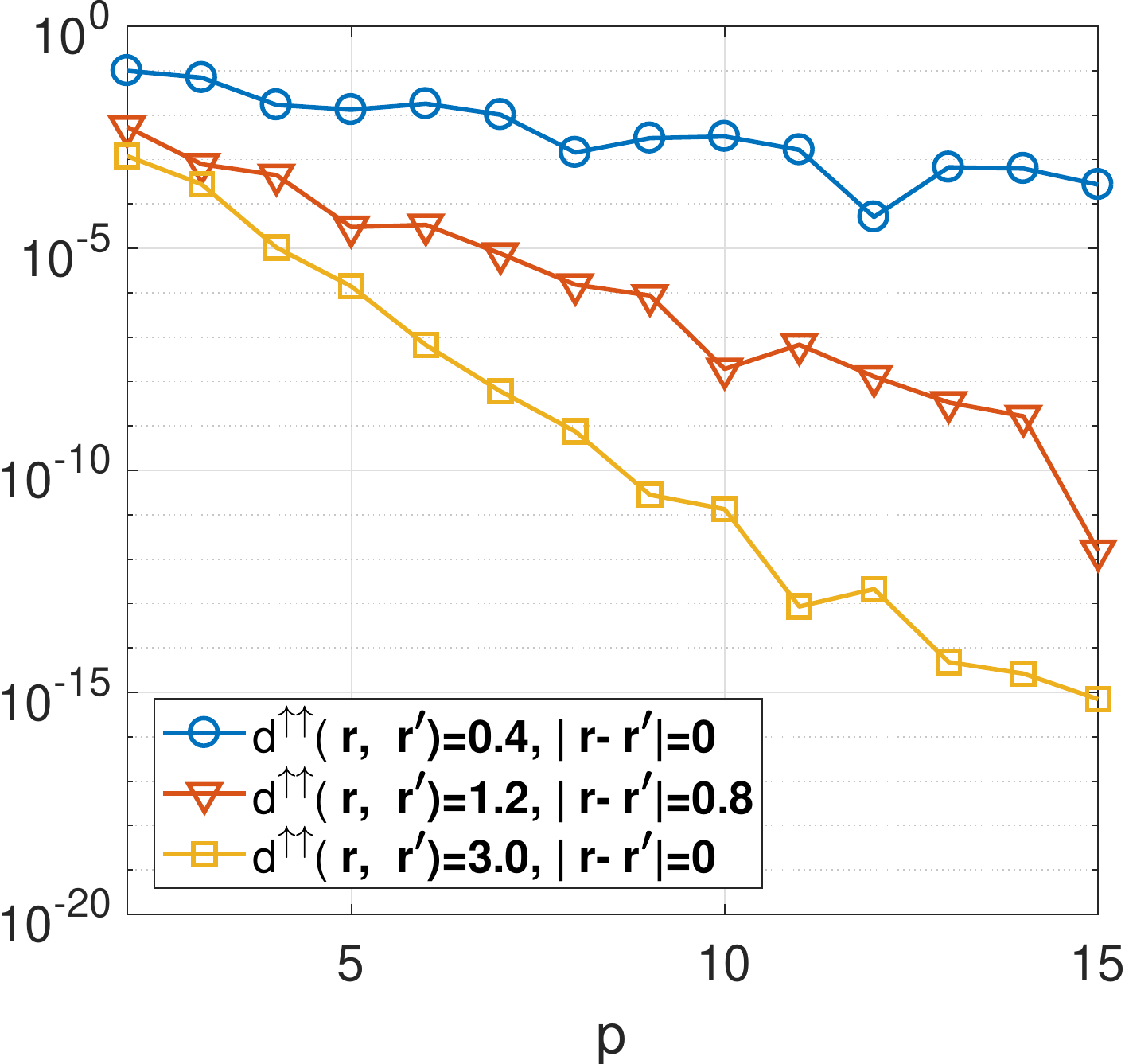}} \qquad
	\subfigure[$u_{11}^{\uparrow\downarrow}(\bs r, \bs r')$ and $a=0.13, 0.10, 0.10$]{\includegraphics[scale=0.35]{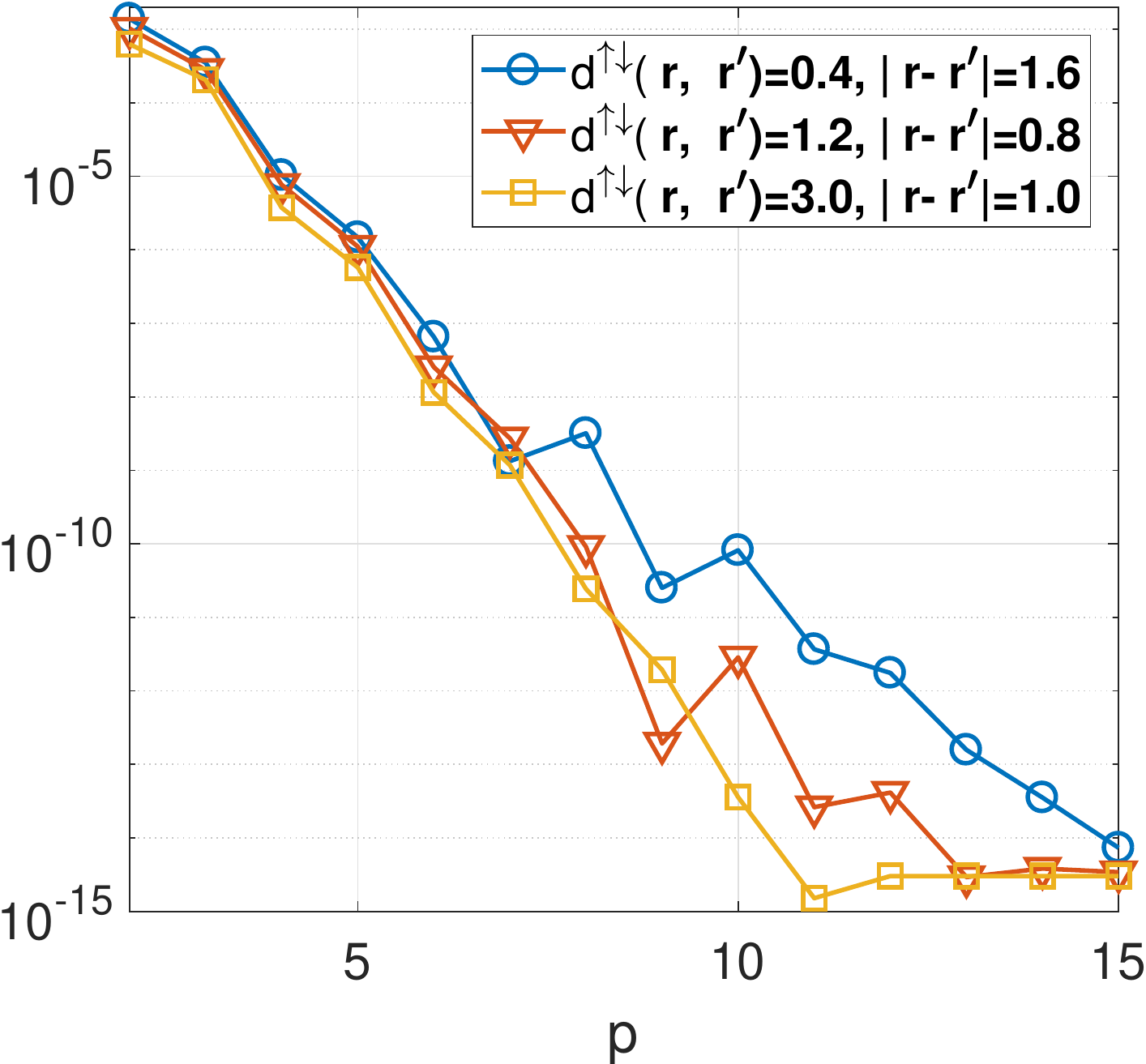}}
	\caption{Errors of the multipole expansions vs. truncation order $p$.}%
	\label{meconvergence}%
\end{figure}
\begin{figure}[ptbh]
	\center
	\subfigure[$u_{\ell\ell'}^{\uparrow\uparrow}$]{\includegraphics[scale=0.4]{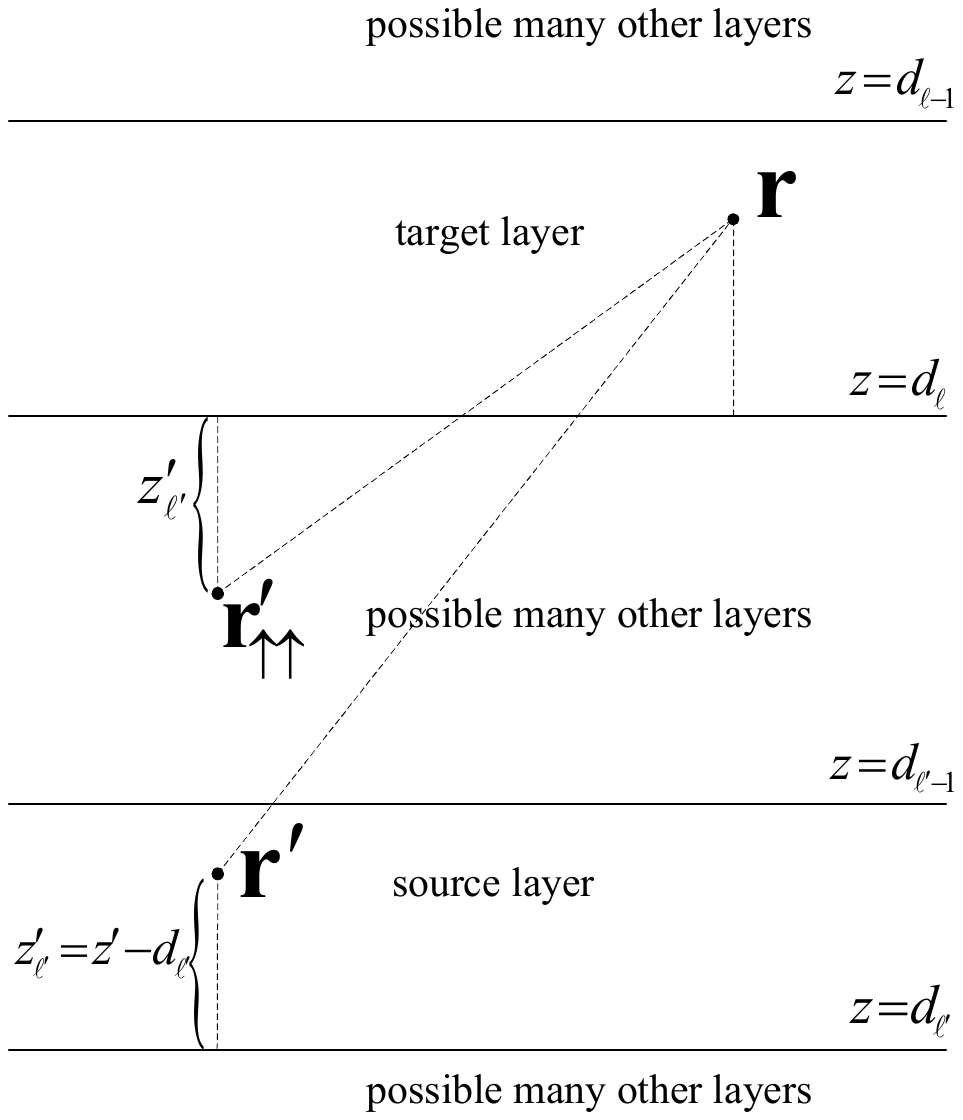}} \qquad
	\subfigure[$u_{\ell\ell'}^{\uparrow\downarrow}$]{\includegraphics[scale=0.4]{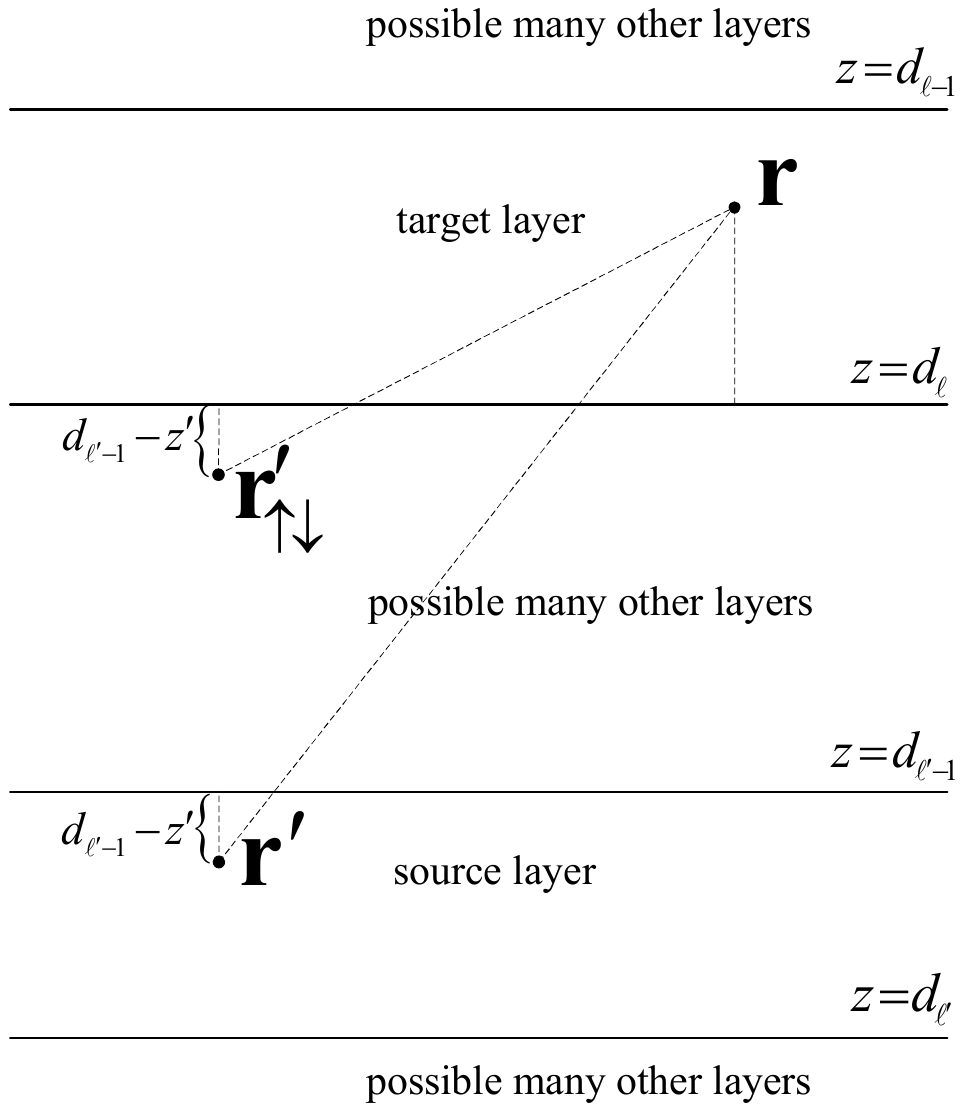}} \\

	\subfigure[$u_{\ell\ell'}^{\downarrow\uparrow}$]{\includegraphics[scale=0.4]{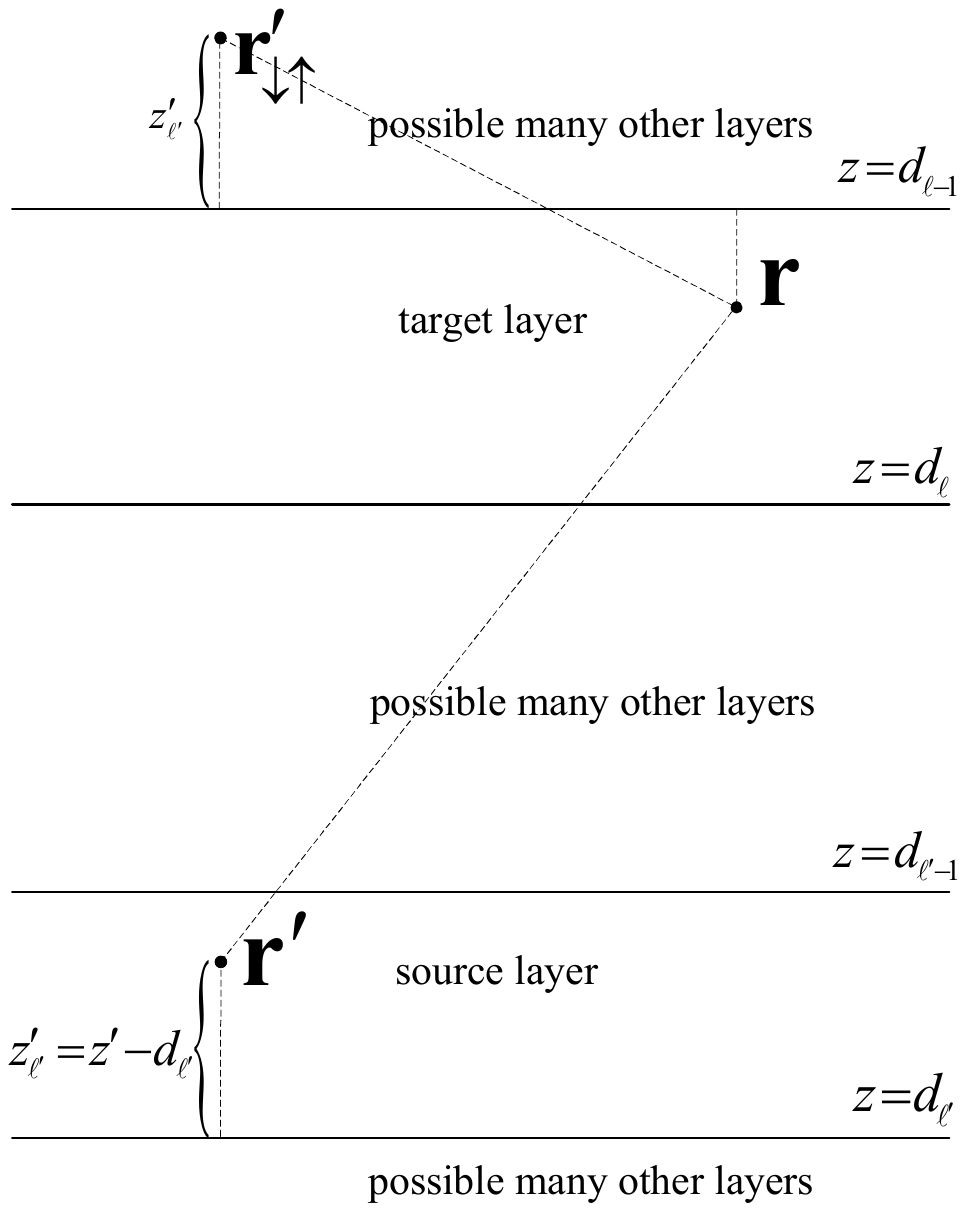}} \qquad
	\subfigure[$u_{\ell\ell'}^{\downarrow\downarrow}$]{\includegraphics[scale=0.4]{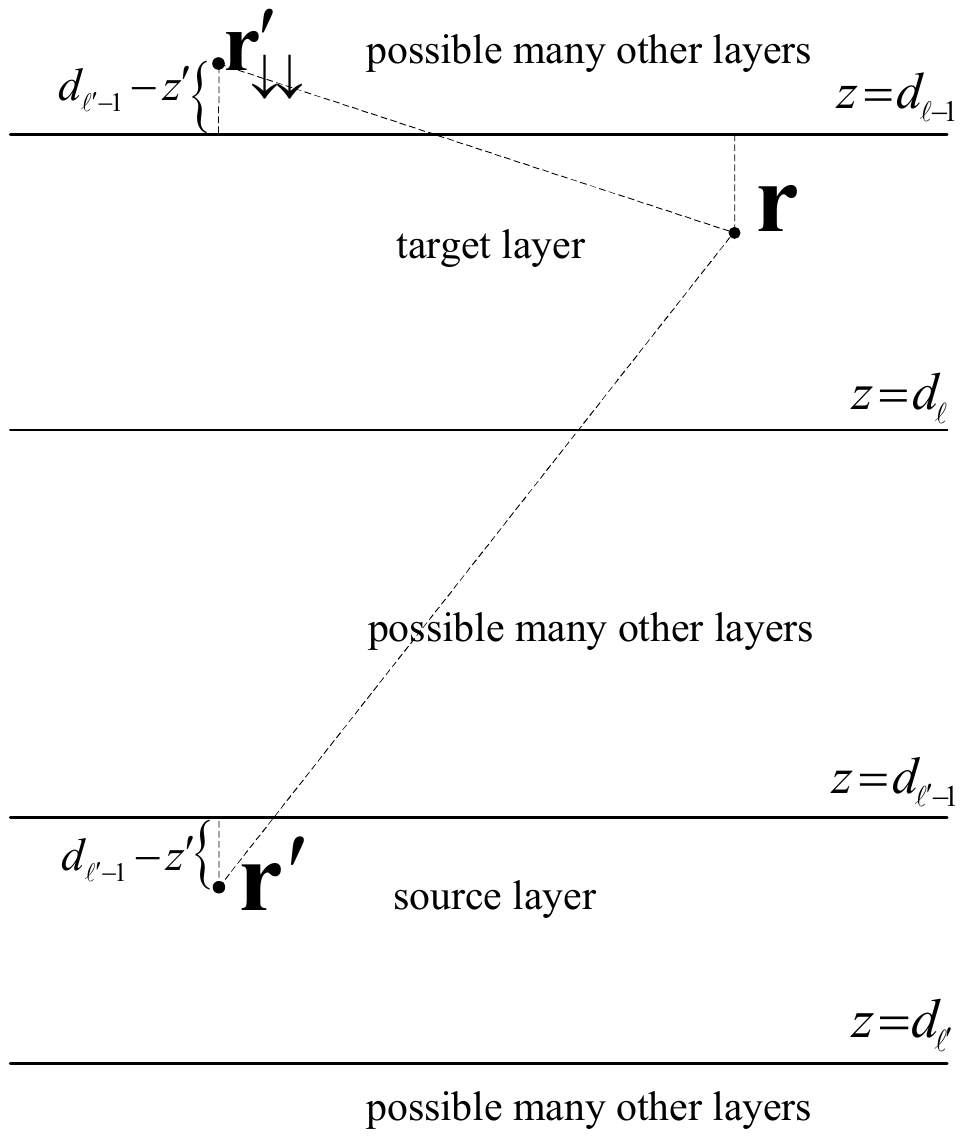}}
	\caption{Location of equivalent polarization sources for the computation of $u_{\ell\ell'}^{**}$.}%
	\label{sourceimages}%
\end{figure}

\noindent {\bf Polarization source for reaction field.} In extending the FMM to the reaction field component of the layered Green's function, the hierarchical tree structure design as in free space FMM relies on using the Euclidean distance between source and target to determine either direct computation or multipole and local expansions are used for the computation of source-target interactions. Therefore, multipole and local expansions as given in \eqref{melayerupgoing}-\eqref{lecoeffup} are generally not compatible with the hierarchical design of FMM. Considering the convergence behavior of ME in \eqref{melayerapp} and \eqref{redefinedist}, we introduce equivalent polarization sources for the four types of reaction fields (see. Fig. \ref{sourceimages})
\begin{equation}\label{eqpolarizedsource}
\begin{split}
&\bs r'_{\uparrow\uparrow}=(x', y', d_{\ell}-(z'-d_{\ell'})),\quad\quad \bs r'_{\uparrow\downarrow}=(x', y', d_{\ell}-(d_{\ell'-1}-z')),\\
&\bs r'_{\downarrow\uparrow}=(x', y', d_{\ell-1}+(z'-d_{\ell'})),\quad \bs r'_{\downarrow\downarrow}=(x', y', d_{\ell-1}+(d_{\ell'-1}-z'))
\end{split}
\end{equation}
{which satisfy
\begin{equation}\label{distancerelation}
d^{\uparrow*}(\bs r, \bs r'_{\uparrow*})=|\bs r-\bs r'_{\uparrow*}|,\quad  {\rm if}\; z>d_{\ell}; \quad d^{\downarrow*}(\bs r, \bs r'_{\downarrow*})=|\bs r-\bs r'_{\downarrow*}|,\quad  {\rm if}\; z<d_{\ell-1}.
\end{equation}}

Also, we define
\begin{equation}\label{zexponentialimage}
\begin{split}
&\widetilde{\mathcal{Z}}_{\ell\ell'}^{\uparrow}(z, z_s):=e^{\ri k_{\ell z} (z-d_{\ell})+\ri k_{\ell' z}(d_{\ell}-z_s)},\quad \widetilde{\mathcal{Z}}_{\ell\ell'}^{\downarrow}(z, z_s):=e^{\ri k_{\ell z} (d_{\ell-1}-z)+\ri k_{\ell' z}(z_s-d_{\ell-1})},
\end{split}
\end{equation}
and
\begin{equation}\label{mekernelimage}
\begin{split}
&\widetilde{\mathcal E}_{\ell\ell'}^{*}(\bs r, \bs r_s):=e^{\ri\bs k_{\alpha}\cdot(\bs\rho-\bs\rho_s)}\widetilde{\mathcal{Z}}_{\ell\ell'}^{*}(z, z_s).
\end{split}
\end{equation}
Recall the expressions \eqref{mekernel}, we can verify that
\begin{equation}
\begin{split}
\mathcal E_{\ell\ell'}^{\uparrow*}(\bs r, \bs r')=\widetilde{\mathcal E}_{\ell\ell'}^{\uparrow}(\bs r, \bs r'_{\uparrow*}),\quad
\mathcal E_{\ell\ell'}^{\downarrow*}(\bs r, \bs r')=\widetilde{\mathcal E}_{\ell\ell'}^{\downarrow}(\bs r, \bs r'_{\downarrow*}).
\end{split}
\end{equation}
Therefore, the reaction components can be re-expressed using equivalent polarization sources as follows
\begin{equation}\label{generalcomponentsimag}
\begin{split}
u_{\ell\ell'}^{\uparrow*}(\bs r, \bs r')=\tilde u_{\ell\ell'}^{\uparrow*}(\bs r, \bs r'_{\uparrow*}):=\frac{\ri}{8\pi^2 }\int_0^{\infty}\int_0^{2\pi}k_{\rho}\frac{\widetilde{\mathcal E}_{\ell\ell'}^{\uparrow}(\bs r, \bs r'_{\uparrow*})\sigma_{\ell\ell'}^{\uparrow*}(k_{\rho})}{k_{\ell z}}d\alpha dk_{\rho},\\
u_{\ell\ell'}^{\downarrow*}(\bs r, \bs r')=\tilde u_{\ell\ell'}^{\downarrow*}(\bs r, \bs r'_{\downarrow*}):=\frac{\ri}{8\pi^2 }\int_0^{\infty}\int_0^{2\pi}k_{\rho}\frac{\widetilde{\mathcal E}_{\ell\ell'}^{\downarrow}(\bs r, \bs r'_{\downarrow*})\sigma_{\ell\ell'}^{\downarrow*}(k_{\rho})}{k_{\ell z}}d\alpha dk_{\rho}.
\end{split}
\end{equation}

Note that $\tilde u_{\ell\ell'}^{\uparrow*}$ and $\tilde u_{\ell\ell'}^{\downarrow*}$ have the same form as the two special cases $u_{\ell\ell+1}^{\uparrow\downarrow}$ and $u_{\ell\ell-1}^{\downarrow\uparrow}$, respectively, except for wave number $k_{\ell'z}$ and densities $\sigma_{\ell\ell'}^{**}$ in the $\ell'$-th layer. Following the same derivation before,
%last subsection
we obtain the following multipole expansion
\begin{equation}\label{melayerupgoingimage}
\begin{split}
\tilde u_{\ell\ell'}^{**}(\bs r, \bs r'_{**})=\sum\limits_{n=0}^{\infty}\sum\limits_{m=-n}^{n}  M_{nm}^{**}\widetilde{\mathcal F}_{nm}^{**}(\bs r, \bs r_c^{**}), \quad M_{nm}^{**}=4\pi j_n(k_{\ell'}r_s^{**})\overline{Y_n^{m}(\theta_s^{**},\varphi_s^{**})},
\end{split}
\end{equation}
at equivalent polarization source centers $\bs r_c^{**}$ and local expansion
\begin{equation}\label{lelayerimage}
\begin{split}
\tilde u_{\ell\ell'}^{**}(\bs r, \bs r'_{**})=\sum\limits_{n=0}^{\infty}\sum\limits_{m=-n}^{n} L_{nm}^{**}j_n(k_{\ell}r_t)Y_n^m(\theta_t,\varphi_t)
\end{split}
\end{equation}
at target center $\bs r_c^t$. Here, $(r_s^{**}, \theta_s^{**}, \varphi_s^{**})$ are spherical coordinates of $\bs r'_{**}-\bs r_c^{**}$, $\widetilde{\mathcal F}_{nm}^{**}(\bs r, \bs r_c^{**})$ are represented by Sommerfeld-type integrals
\begin{equation}\label{mebasis}
\begin{split}
\widetilde{\mathcal F}_{nm}^{\uparrow*}(\bs r, \bs r_c^{\uparrow*})=&\frac{\ri}{8\pi^2}\int_0^{\infty}\int_0^{2\pi}k_{\rho}\frac{\widetilde{\mathcal E}_{\ell\ell'}^{\uparrow}(\bs r, \bs r_c^{\uparrow*})\sigma_{\ell\ell'}^{\uparrow*}(k_{\rho})}{k_{\ell z}}(-\ri)^n\widehat{P}_n^m\Big(\frac{k_{\ell'z}}{k_{\ell'}}\Big)e^{\ri m\alpha}d\alpha dk_{\rho},\\
\widetilde{\mathcal F}_{nm}^{\downarrow*}(\bs r, \bs r_c^{\downarrow*})=&\frac{\ri}{8\pi^2}\int_0^{\infty}\int_0^{2\pi}k_{\rho}\frac{\widetilde{\mathcal E}_{\ell\ell'}^{\downarrow}(\bs r, \bs r_c^{\downarrow*})\sigma_{\ell\ell'}^{\downarrow*}(k_{\rho})}{k_{\ell z}}(-\ri)^n\widehat{P}_n^m\Big(\frac{k_{\ell'z}}{k_{\ell'}}\Big)e^{\ri m\alpha}d\alpha dk_{\rho},
\end{split}
\end{equation}
and the local expansion coefficients are given by
\begin{equation}\label{lecoeffimage}
\begin{split}
L_{nm}^{\uparrow*}=&\frac{\ri}{2\pi }\int_0^{\infty}\int_0^{2\pi}k_{\rho}\frac{\widetilde{\mathcal E}_{\ell\ell^{\prime}}^{\uparrow}(\bs r_c^t, \bs r'_{\uparrow*})\sigma_{\ell\ell^{\prime}}^{\uparrow*}(k_{\rho})}{k_{\ell z}}\ri^n\widehat{P}_n^m\Big(\frac{k_{\ell z}}{k_{\ell}}\Big)e^{-\ri m\alpha}d\alpha dk_{\rho},\\
L_{nm}^{\downarrow*}=&\frac{\ri}{2\pi }\int_0^{\infty}\int_0^{2\pi}k_{\rho}\frac{(-1)^{n+m}\widetilde{\mathcal E}_{\ell\ell^{\prime}}^{\downarrow}(\bs r_c^t, \bs r'_{\downarrow*})\sigma_{\ell\ell^{\prime}}^{\uparrow*}(k_{\rho})}{k_{\ell z}}\ri^n\widehat{P}_n^m\Big(\frac{k_{\ell z}}{k_{\ell}}\Big)e^{-\ri m\alpha}d\alpha dk_{\rho}.
\end{split}
\end{equation}
{Similar to the restrictions in \eqref{centercond}, the centers $r_c^{**}$ and $\bs r_c^t$ are required to satisfy
\begin{equation}\label{imagecentercond}
z_c^{\uparrow*}<d_{\ell}, \quad z_c^{\downarrow*}>d_{\ell-1},\quad z_c^t>d_{\ell} \;\; {\rm for}\;\; \tilde u_{\ell\ell'}^{\uparrow*}(\bs r, \bs r'_{\uparrow*}); \quad z_c^t<d_{\ell-1} \;\; {\rm for}\;\; \tilde u_{\ell\ell'}^{\downarrow*}(\bs r, \bs r'_{\downarrow*}).
\end{equation}}
Recall convergence results \eqref{melayerapp} and the fact \eqref{distancerelation},
we conclude that ME \eqref{melayerupgoingimage} now satisfies
\begin{equation}
\begin{split}
\Big|\tilde u_{\ell\ell'}^{**}(\bs r, \bs r'_{**})-\sum\limits_{n=0}^{p}\sum\limits_{m=-n}^{n}  M_{nm}^{**}\widetilde{\mathcal F}_{nm}^{**}(\bs r, \bs r_c^{**})\Big|=\mathcal{O}\left(\left(\frac{|\bs r'_{**}-\bs r_c^{**}|}{|\bs r-\bs r_c^{**}|}\right)^p\right).
\end{split}
\end{equation}
As a result, the Euclidean distance  $|\bs r-\bs r'_{**}|$ can be used to determine if multipole expansions \eqref{melayerupgoingimage} are good approximations to the far reaction field. Similarly, the local expansion \eqref{lelayerimage} for $\tilde u_{\ell\ell'}^{**}(\bs r, \bs r'_{**})$ have a convergence of order $\mathcal{O}\left(\left(\frac{|\bs r-\bs r_c^t|}{|\bs r_c^t-\bs r'_{**}|}\right)^p\right)$. These convergence results ensure that the hierarchical design can be applied in FMM with kernels $\tilde u_{\ell\ell'}^{**}(\bs r, \bs r'_{**})$, ME \eqref{melayerupgoingimage} and LE \eqref{lelayerimage}.

Next, we discuss the center shifting and translation for ME \eqref{melayerupgoingimage} and LE \eqref{lelayerimage}. A desirable feature of the expansions of reaction components discussed above is that the formula \eqref{melayerupgoingimage} for the ME coefficients and the formula \eqref{lelayerimage} for the LE have exactly the same form as the formulas of $h$-expansion coefficients and $j$-expansion for free space Green's function. Therefore, we can see that center shifting for multipole and local expansions are exactly the same as free space case given in \eqref{metome}.

We only need to derive the translation operator from ME \eqref{melayerupgoingimage} to LE \eqref{lelayerimage}. Recall the definition of exponential functions in \eqref{mekernelimage}, $\widetilde{\mathcal E}_{\ell\ell'}^{\uparrow}(\bs r, \bs r_c^{\uparrow*})$ and $\widetilde{\mathcal E}_{\ell\ell'}^{\downarrow}(\bs r, \bs r_c^{\downarrow*})$ have the following splitting
\begin{equation*}
\begin{split}
\widetilde{\mathcal E}_{\ell\ell'}^{\uparrow}(\bs r, \bs r_c^{\uparrow*})&=\widetilde{\mathcal E}_{\ell\ell'}^{\uparrow}(\bs r_c^t, \bs r_c^{\uparrow*})e^{\ri\bs k_{\alpha}\cdot(\bs\rho-\bs\rho_c^t)}e^{\ri k_{\ell z} (z-z_c^t)},\\
\widetilde{\mathcal E}_{\ell\ell'}^{\downarrow}(\bs r, \bs r_c^{\downarrow*})&=\widetilde{\mathcal E}_{\ell\ell'}^{\downarrow}(\bs r_c^t, \bs r_c^{\downarrow*})e^{\ri\bs k_{\alpha}\cdot(\bs\rho-\bs\rho_c^t)}e^{-\ri k_{\ell z} (z-z_c^t)}.
\end{split}
\end{equation*}
Applying Funk-Hecke formula \eqref{extfunkhecke} again, we obtain
\begin{equation*}
e^{\ri\bs k_{\alpha}\cdot(\bs\rho-\bs\rho_c^t)}e^{\pm\ri k_{\ell z} (z-z_c^t)}=4\pi\sum\limits_{n'=0}^{\infty}\sum\limits_{|m'|=0}^{n'} \ri^{n'}(\pm 1)^{\mu}j_{n'}(k_{\ell}r_t)Y_{n'}^{m'}(\theta_t,\varphi_t)\widehat{P}_{n'}^{m'}\Big(\frac{k_{\ell z}}{k_{\ell}}\Big)e^{-\ri m'\alpha},
\end{equation*}
where $\mu=(n'+m')(\bmod\; 2)$.
Substituting into \eqref{melayerupgoingimage}, the multipole expansion is translated to local expansion \eqref{lelayerimage} via
\begin{equation}\label{metoleimage1}
L_{nm}^{\uparrow*}=\sum\limits_{n'=0}^{\infty}\sum\limits_{|m'|=0}^{n'}T_{nm,n'm'}^{\uparrow*}M_{n'm'}^{\uparrow*},\; L_{nm}^{\downarrow*}=(-1)^{n+m}\sum\limits_{n'=0}^{\infty}\sum\limits_{|m'|=0}^{n'}T_{nm,n'm'}^{\downarrow*}M_{n'm'}^{\downarrow*},
\end{equation}
and the multipole-to-local translation operators are given in integral forms as follows
\begin{equation}\label{metoleimage2}
\begin{split}
T_{nm,n'm'}^{\uparrow*}=&\frac{(-1)^{n'}}{2\pi }\int_0^{\infty}\int_0^{2\pi}k_{\rho}\frac{\widetilde{\mathcal E}_{\ell\ell'}^{\uparrow}(\bs r_c^t, \bs r_c^{\uparrow*})\sigma_{\ell\ell'}^{\uparrow*}(k_{\rho})}{k_{\ell z}}Q_{nm}^{n'm'}(k_{\rho})e^{\ri (m'-m)\alpha}d\alpha dk_{\rho},\\
T_{nm,n'm'}^{\downarrow*}=&\frac{(-1)^{n'}}{2\pi }\int_0^{\infty}\int_0^{2\pi}k_{\rho}\frac{\widetilde{\mathcal E}_{\ell\ell'}^{\downarrow}(\bs r_c^t, \bs r_c^{\downarrow*})\sigma_{\ell\ell'}^{\downarrow*}(k_{\rho})}{k_{\ell z}}Q_{nm}^{n'm'}(k_{\rho})e^{\ri (m'-m)\alpha}d\alpha dk_{\rho},
\end{split}
\end{equation}
where
$$Q_{nm}^{n'm'}(k_{\rho})=\ri^{n+n'+1}\widehat{P}_{n}^{m}\Big(\frac{k_{\ell z}}{k_{\ell}}\Big)\widehat{P}_{n'}^{m'}\Big(\frac{k_{\ell'z}}{k_{\ell'}}\Big).$$
{We note that the convergence of the Sommerfeld-type integrals in \eqref{metoleimage2} is ensured by the conditions in \eqref{imagecentercond}. }

\begin{rem}
{\bf Interpretation of Polarization Sources:} In special situations such as the half space (a two layer medium) with an impedance boundary condition on the interface, the reaction
fields can be expressed in terms of those from point and line image charges located on the opposite side of the interface from the targets \cite{cho2018heterogeneous} \cite{cai2013computational}.
However, in multilayered cases considered here, the reaction fields, given in terms of complicated integral expressions, in general can not be
expressed in terms of explicit image charges. However, we can still introduce the polarization sources { with locations given in (\ref{eqpolarizedsource}) and Fig. \ref{sourceimages}}, which will represent
the effect of the reaction fields and are given locations based on the convergence behaviors of the MEs and LEs in \eqref{melayerapp} for the corresponding reaction field components.	For the practical FMM implementation purpose, the locations of the polarization sources so defined enable us to decide whether the corresponding MEs can be used for the far field of the reaction components based on the distance between the far field location and the polarization sources, and thus, make the extension of FMMs to source interactions in layered media straightforward.
\end{rem}

\subsection{A FMM algorithm for sources in layered media}
Let $\mathscr{P}_{\ell}=\{(Q_{\ell j},\boldsymbol{r}_{\ell j}),$ $j=1,2,\cdots
,N_{\ell}\}$ be a group of source particles distributed in the $\ell$-th layer
of a multi-layered medium with $L+1$ layers (see Fig. \ref{layerstructure}).
The interactions between all $N:=N_{0}+N_{1}+\cdots+N_{L}$ particles are given by
the summations
\begin{equation}
\Phi_{\ell}(\boldsymbol{r}_{\ell i})=\Phi_{\ell}^{\text{free}}(\boldsymbol{r}_{\ell
	i})+\sum\limits_{\ell^{\prime}=0}^{L-1}[\Phi_{\ell\ell^{\prime}}^{\uparrow\uparrow
}(\boldsymbol{r}_{\ell i})+\Phi_{\ell\ell^{\prime}}^{\downarrow\uparrow
}(\boldsymbol{r}_{\ell i})]+\sum\limits_{\ell^{\prime}=1}^{L}[\Phi_{\ell\ell^{\prime}}^{\uparrow\downarrow
}(\boldsymbol{r}_{\ell i})+\Phi_{\ell\ell^{\prime}}^{\downarrow\downarrow
}(\boldsymbol{r}_{\ell i})],\quad \label{totalinteraction}%
\end{equation}
for $i=1,2,\cdots,N_{\ell}$, $\ell=0,1,\cdots,L$, where
\begin{equation}
\begin{split}
&  \Phi_{\ell}^{\text{free}}(\boldsymbol{r}_{\ell i}):=\sum\limits_{j=1,j\neq
	i}^{N_{\ell}}Q_{\ell j}h_{0}^{(1)}(k_{\ell}|\boldsymbol{r}_{\ell
	i}-\boldsymbol{r}_{\ell j}|),\quad  \Phi_{\ell\ell^{\prime}}^{**}(\boldsymbol{r}_{\ell i}):=\sum
\limits_{j=1}^{N_{\ell^{\prime}}}Q_{\ell^{\prime}j}u_{\ell\ell^{\prime}%
}^{**}(\boldsymbol{r}_{\ell i},\boldsymbol{r}_{\ell^{\prime}j}%
).
\end{split}
\end{equation}
Here, $u_{\ell\ell^{\prime}}^{**}(\bs r, \bs r^{\prime})$ are
the reaction components of the layered Green's function in the $\ell
$-th layer due to a point source in the
$\ell^{\prime}$-th layer. We omit the factor $\frac{\ri k_{\ell}}{4\pi}$
in $u_{\ell\ell^{\prime}}^{**}(\bs r, \bs r^{\prime})$ to maintain consistency with the spherical Hankel function used for the free space component.
General formulas for $u_{\ell\ell^{\prime}}^{**}(\bs r, \bs r^{\prime})$ are given in \eqref{greenfuncomponent}-\eqref{totaldensity}.
\begin{figure}[ht!]
	\includegraphics[scale=0.6]{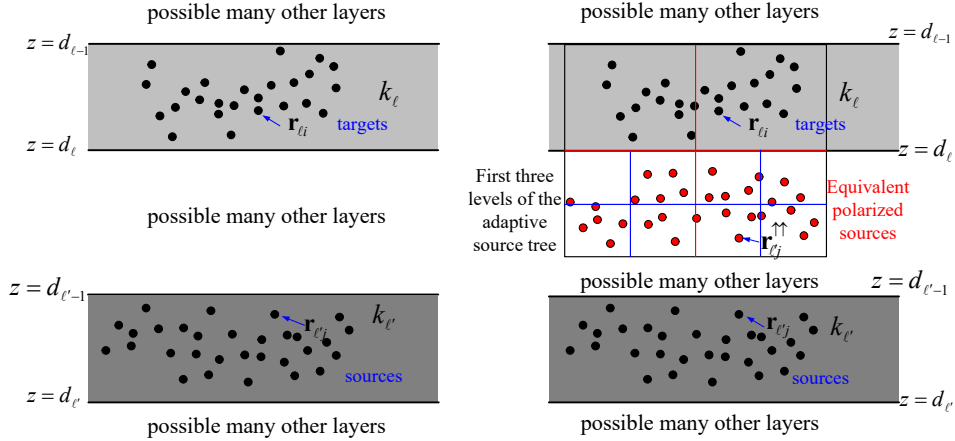}
	\caption{Equivalent polarization sources $\bs r_{\ell'j}^{\uparrow\uparrow}$ and boxes in source tree.}
	\label{polarizedsource}
\end{figure}

Since the reaction components of Green's function in multi-layer media have different
expressions \eqref{greenfuncomponent} for source and target particles in
different layers, it is necessary to perform calculation individually for
interactions between any two groups of particles among the $L+1$ groups
$\{\mathscr{P}_{\ell}\}_{\ell=0}^{L}$. Here, we only consider the contribution from the reaction components and that from free space components will be calculated using free space FMM. Without loss of generality, let us
focus on the computation of the $\uparrow\uparrow$ component in
$\ell$-th layer due to sources in $\ell^{\prime}$-th layer, i.e., $\Phi_{\ell\ell^{\prime}}^{\uparrow\uparrow}(\boldsymbol{r}_{\ell i})$ for all $i=1, 2, \cdots, N_{\ell}$ where we have source particles $\mathscr P_{\ell^{\prime}}$ and target particles $\mathscr P_{\ell}$. According to the discussion in the last section, we use the equivalent polarization sources (see. Fig. \ref{polarizedsource})
$$\boldsymbol{r}_{\ell^{\prime}j}^{\uparrow\uparrow}:=(x_{\ell' j}, y_{\ell' j}, d_{\ell}-(z_{\ell' j}-d_{\ell'})),$$
and re-express $\Phi_{\ell\ell'}^{\uparrow\uparrow}(\bs r_{\ell i})$ as
\begin{equation}\label{layerfield}
\Phi_{\ell\ell'}^{\uparrow\uparrow}(\bs r_{\ell i})=\sum
\limits_{j=1}^{N_{\ell^{\prime}}}Q_{\ell^{\prime}j}\tilde u_{\ell\ell^{\prime}%
}^{\uparrow\uparrow}(\boldsymbol{r}_{\ell i},\boldsymbol{r}_{\ell^{\prime}j}^{\uparrow\uparrow}),\quad i=1,2,\cdots,N_{\ell}.
\end{equation}
A very important feature of the equivalent polarization sources $\{\boldsymbol{r}_{\ell^{\prime}j}^{\uparrow\uparrow}\}_{j=1}^{N_{\ell'}}$ is that they are all separated from the corresponding targets $\{\bs r_{\ell i}\}_{i=1}^{N_{\ell}}$ by the interface $z=d_{\ell}$. As a matter of fact,  they are located on different side of the interface $z=d_{\ell}$, see. Fig. \ref{polarizedsource} (right). Equivalent polarization sources \eqref{eqpolarizedsource} defined for other reaction components also have this property.

The framework of the free space FMM with ME \eqref{melayerupgoingimage}, LE \eqref{lelayerimage}, M2L translation \eqref{metoleimage1}-\eqref{metoleimage2} and free space ME and LE center shifting allow us to use FMM for the computation of reaction component $\Phi_{\ell\ell'}^{\uparrow\uparrow}(\bs r_{\ell i})$. FMM for other reaction components can be treated in the same manner. In the FMM for reaction components, for each reaction component, a corresponding large box is defined to include all equivalent polarization sources and target particles where the adaptive tree structure will be built by a bisection procedure, see. Fig. \ref{polarizedsource} (right). { Note that the validity of the ME \eqref{melayerupgoingimage}, LE \eqref{lelayerimage} and M2L translation \eqref{metoleimage1} used in the algorithm imposes restrictions \eqref{imagecentercond} on the centers, accordingly. This can be ensured by setting the largest box for the specific reaction component to be equally divided by the interface between equivalent polarization sources and targets, see. Fig. \ref{polarizedsource} (right). Thus, the largest box for the FMM implementation will be different for different reaction component.  With this setting, all source and target boxes of level higher than zeroth level in the adaptive tree structure will have centers below or above the interface $z=d_{\ell}$, accordingly. }
The fast multipole algorithm for the computation of the reaction component $\Phi_{\ell\ell'}^{\uparrow\uparrow}(\bs r_{\ell i})$ is summarized in Algorithm 1.
\begin{algorithm}\label{algorithm1}
	\caption{FMM for general component $\Phi_{\ell\ell'}^{\uparrow\uparrow}(\bs r_{\ell i}), i=1, 2, \cdots, N_{\ell}$}
	\begin{algorithmic}
		\State Determine $z$-coordinates of equivalent polarization sources for all source particles.
		\State Generate an adaptive hierarchical tree structure with polarization sources $\{Q_{\ell'j}, \bs r_{\ell'j}^{\uparrow\uparrow}\}_{j=1}^{N_{\ell'}}$, targets $\{\bs r_{\ell i}\}_{i=1}^{N_{\ell}}$ and pre-compute tables of $S^{\uparrow\uparrow}_{nm,\nu\mu}$ in \eqref{M2Ltable}.
		\State{\bf Upward pass:}
		\For{$l=H \to 0$}
		\For{all boxes $j$ on source tree level $l$ }
		\If{$j$ is a leaf node}
		\State{form the free-space ME using Eq. \eqref{melayerupgoingimage}.}
		\Else
		\State form the free-space ME by merging children's expansions using the free-space center shift translation operator \eqref{metome}.
		\EndIf
		\EndFor
		\EndFor
		\State{\bf Downward pass:}
		\For{$l=1 \to H$}
		\For{all boxes $j$ on target tree level $l$ }
		\State shift the LE of $j$'s parent to $j$ itself using the free-space shifting \eqref{metome}.
		\State collect interaction list contribution using the source box to target box translation operator in Eq. \eqref{metoleimage1} and \eqref{metoletableform} with pre-computed tables of $S^{\uparrow\uparrow}_{nm,\nu\mu}$.
		\EndFor
		\EndFor
		\State {\bf Evaluate Local Expansions:}
		\For{each leaf node (childless box)}
		\State evaluate the local expansion at each particle location.
		\EndFor
		\State {\bf Local Direct Interactions:}
		\For{$i=1 \to N$ }
		\State compute Eq. \eqref{layerfield} of target particle $i$ in the neighboring boxes using pre-computed tables of $S^{\uparrow\uparrow}_{nm,\nu\mu}$.
		\EndFor
	\end{algorithmic}
\end{algorithm}
All the interaction given by \eqref{totalinteraction} will be obtained by calculating all components and summing them up.
%The algorithm is presented in Algorithm 2.
%\begin{algorithm}\label{algorithm2}
%	\caption{3-D FMM for \eqref{totalinteraction}}
%	\begin{algorithmic}
%		\For{$\ell=0 \to L$}
%		\State{use free space FMM to compute $\Phi_{\ell}^{free}(\bs r_{\ell i})$, $i=1, 2, \cdots, N_{\ell}$.}
%		\EndFor
%		\For{$\ell=0 \to L-1$}
%		\For{$\ell'=0 \to L-1$ }
%		\State use {\bf Algorithm 1} to compute $\Phi_{\ell\ell'}^{\uparrow\uparrow}(\bs r_{\ell i})$, $i=1, 2, \cdots, N_{\ell}$.
%		\EndFor
%		\For{$\ell'=1 \to L$ }
%		\State use {\bf Algorithm 1} to compute $\Phi_{\ell\ell'}^{\uparrow\downarrow}(\bs r_{\ell i})$, $i=1, 2, \cdots, N_{\ell}$.
%		\EndFor
%		\EndFor
%		\For{$\ell=1 \to L$}
%		\For{$\ell'=0 \to L-1$ }
%		\State use {\bf Algorithm 1} to compute $\Phi_{\ell\ell'}^{\downarrow\uparrow}(\bs r_{\ell i})$, $i=1, 2, \cdots, N_{\ell}$.
%		\EndFor
%		\For{$\ell'=1 \to L$ }
%		\State use {\bf Algorithm 1} to compute $\Phi_{\ell\ell'}^{\downarrow\downarrow}(\bs r_{\ell i})$, $i=1, 2, \cdots, N_{\ell}$.
%		\EndFor
%		\EndFor
%	\end{algorithmic}
%\end{algorithm}

\subsection{Implementation details for efficiency}
In this section, we will discuss some implementation details regarding the computation of double integrals involved in the multipole and local expansions and multipole-to-local translations.
Especially, they can be simplified by using the following identity
\begin{equation}
J_n(z)=\frac{1}{2\pi \ri^n}\int_0^{2\pi}e^{\ri z\cos\theta+\ri n\theta}d\theta.
\end{equation}
In particular, multipole expansion functions in \eqref{mebasis} can be simplified as
\begin{equation*}
\begin{split}
\widetilde F_{nm}^{\uparrow*}(\bs r, \bs r_c^{\uparrow*})=\frac{(-\ri)^n\ri^{m+1}e^{\ri m\varphi_s^{\uparrow*}}}{4\pi}\int_0^{\infty}k_{\rho}J_{m}(k_{\rho}\rho_s^{\uparrow*})\frac{\widetilde{\mathcal{Z}}_{\ell\ell'}^{\uparrow}(z, z_c^{\uparrow*})\sigma_{\ell\ell'}^{\uparrow*}(k_{\rho})}{k_{\ell z}}\widehat{P}_n^m\Big(\frac{k_{\ell'z}}{k_{\ell'}}\Big) dk_{\rho},\\
\widetilde F_{nm}^{\downarrow*}(\bs r, \bs r_c^{\downarrow*})=\frac{(-\ri)^n\ri^{m+1}e^{\ri m\varphi_s^{\downarrow*}}}{4\pi}\int_0^{\infty}k_{\rho}J_{m}(k_{\rho}\rho_s^{\downarrow*})\frac{\widetilde{\mathcal{Z}}_{\ell\ell'}^{\downarrow}(z, z_c^{\downarrow*})\sigma_{\ell\ell'}^{\downarrow*}(k_{\rho})}{k_{\ell z}}\widehat{P}_n^m\Big(\frac{k_{\ell'z}}{k_{\ell'}}\Big) dk_{\rho},
\end{split}
\end{equation*}
and the expression \eqref{lecoeffimage} for local expansion coefficients can be simplified as
\begin{equation*}
\begin{split}
L_{nm}^{\uparrow*}= (-1)^m\ri^{n-m+1}e^{-\ri m(\pi+\varphi_t^{\uparrow*})}\int_0^{\infty}k_{\rho}J_{m}(k_{\rho}\rho_t^{\uparrow*})\frac{\widetilde{\mathcal Z}_{\ell\ell'}^{\uparrow}(z_c^t, z'_{\uparrow*})\sigma_{\ell\ell'}^{\uparrow*}(k_{\rho})}{k_{\ell z}}\widehat{P}_n^m\Big(\frac{k_{\ell z}}{k_{\ell}}\Big) dk_{\rho},\\
L_{nm}^{\downarrow*}= (-1)^{n}\ri^{n-m+1}e^{-\ri m(\pi+\varphi_t^{\downarrow*})}\int_0^{\infty}k_{\rho}J_{m}(k_{\rho}\rho_t^{\downarrow*})\frac{\widetilde{\mathcal Z}_{\ell\ell'}^{\downarrow}(z_c^t, z'_{\downarrow*})\sigma_{\ell\ell'}^{\downarrow*}(k_{\rho})}{k_{\ell z}}\widehat{P}_n^m\Big(\frac{k_{\ell z}}{k_{\ell}}\Big) dk_{\rho},
\end{split}
\end{equation*}
where $({\rho}_s^{**}, \varphi_s^{**})$ and $({\rho}_t^{**}, \varphi_t^{**})$ are polar coordinates of $\bs r'_{**}-\bs r_c^{**}$ and $\bs r'_{**}-\bs r_c^t$ projected on $xy$ plane. Moreover, the multipole to local translation \eqref{metoleimage2} can be simplified as
\begin{equation}\label{me2lesimplified}
\begin{split}
T_{nm,n'm'}^{\uparrow*}&=C_{n'mm'}^{\uparrow*}\int_0^{\infty}k_{\rho}J_{m'-m}(k_{\rho}\rho_{ts}^{\uparrow*})\frac{\widetilde{\mathcal{Z}}_{\ell\ell'}^{\uparrow}(z_c^t, z_c^{\uparrow*})\sigma_{\ell\ell'}^{\uparrow*}(k_{\rho})}{k_{\ell z}}Q_{nm}^{n'm'}(k_{\rho}) dk_{\rho},\\
T_{nm,n'm'}^{\downarrow*}&=C_{n'mm'}^{\downarrow*}\int_0^{\infty}k_{\rho}J_{m'-m}(k_{\rho}\rho_{ts}^{\downarrow*})\frac{\widetilde{\mathcal{Z}}_{\ell\ell'}^{\downarrow}(z_c^t, z_c^{\downarrow*})\sigma_{\ell\ell'}^{\downarrow*}(k_{\rho})}{k_{\ell z}}Q_{nm}^{n'm'}(k_{\rho}) dk_{\rho},
\end{split}
\end{equation}
where $C_{n'mm'}^{**}=(-1)^{n'} \ri^{m'-m}e^{\ri(m'-m)\varphi_{ts}^{**}}$ and $(\rho_{ts}^{**}, {\varphi}_{ts}^{**})$ is the polar coordinates of $\bs r_c^t-\bs r_c^{**}$ projected on $xy$ plane.
Denoting
\begin{equation}\label{uniformintegral}
\begin{split}
I_{nm,n'm'}^{\uparrow*}(\rho, z, z')=\int_0^{\infty}k_{\rho}J_{m'-m}(k_{\rho}\rho)\frac{\widetilde{\mathcal{Z}}_{\ell\ell'}^{*}(z, z')\sigma_{\ell\ell'}^{\uparrow*}(k_{\rho})}{k_{\ell z}}Q_{nm}^{n'm'}(k_{\rho}) dk_{\rho},\\
I_{nm,n'm'}^{\downarrow*}(\rho, z, z')=\int_0^{\infty}k_{\rho}J_{m'-m}(k_{\rho}\rho)\frac{\widetilde{\mathcal{Z}}_{\ell\ell'}^{*}(z, z')\sigma_{\ell\ell'}^{\downarrow*}(k_{\rho})}{k_{\ell z}}Q_{nm}^{n'm'}(k_{\rho}) dk_{\rho},
\end{split}
\end{equation}
then we have
\begin{equation}
\begin{split}
&\widetilde F_{nm}^{**}(\bs r, \bs r_c^{**})=\frac{(-1)^n\ri^{m}e^{\ri m\varphi_s^{**}}}{(\sqrt{4\pi})^3}I_{nm,00}^{**}(\rho_s^{**}, z, z_c^{**}), \\
&L_{nm}^{\uparrow*}=\frac{ \ri^{m}e^{-\ri m(\pi+\varphi_t^{\uparrow*})}}{\sqrt{4\pi}}I_{nm,00}^{\uparrow*}(\rho_t^{\uparrow*}, z_c^t, z'_{\uparrow*}),\\
&L_{nm}^{\downarrow*}=\frac{ (-1)^{n-m}\ri^{m}e^{-\ri m(\pi+\varphi_t^{\downarrow*})}}{\sqrt{4\pi}}I_{nm,00}^{\downarrow*}(\rho_t^{\downarrow*}, z_c^t, z'_{\downarrow*}),\\
& T_{nm,n'm'}^{**}=C_{n'mm'}^{**}I_{nm,n'm'}^{*\uparrow}(\rho_{ts}^{**}, z_c^t, z_c^{**}).
\end{split}
\end{equation}

The FMM demands efficient computation of Sommerfeld-type integrals $I_{nm,n'm'}^{**}$ defined in \eqref{uniformintegral}. It
is clear that they have oscillatory integrands with pole singularities in $\sigma_{\ell\ell'}^{**}(k_{\rho})$ due to
the existence of surface waves. For a long time, much effort has been
made on the computation of this type of integrals, including using ideas from
high-frequency asymptotics, rational approximation, contour deformation (cf.
\cite{cai2013computational,cai2000fast,cho2012parallel,okhmatovski2004evaluation,paulus2000accurate}%
), complex images (cf.\cite{fang1988discrete,paulus2000accurate,ochmann2004complex,alparslan2010closed}%
), and methods based on special functions (cf. \cite{koh2006exact}) or
physical images (cf.
\cite{li1996near,ling2000discrete,o2014efficient,lai2016new}).
These integrals are convergent when the target and
source particles are not exactly on the interfaces of a layered medium.
Contour deformation with high order quadrature rules could be used for direct
numerical computation at runtime. However, this becomes prohibitively expensive due to a
large number of integrals needed in the FMM. In fact, $O(p^{4})$ integrals
will be required for each source box to target box translation. Moreover, the
involved integrand decays more slowly as the order of the involved associated Legendre function increases. The length of contour needs to be very long to obtain a
required accuracy.

Note that $I_{nm,n'm'}^{**}(\rho, z, z')$ is a smooth function with respect to $(\rho,z,z^{\prime})$. It is feasible to make a pre-computed
table on a fine grid and then use interpolation to obtain approximations for
the integrals. If we make pre-computed tables for all $|m|\leq n, |m'|\leq n'$, $n, n'=0, 1, \cdots, p$, there will be $(p+1)^4$  3-D tables to be stored. However,  the number of pre-computed tables can be reduced to $4(p+1)(2p+1)$.

Let
\begin{equation}
c_{nm}=\sqrt{\frac{2n+1}{4\pi}\frac{(n-m)!}{(n+m)!}},\quad  a_{nm}^j=\frac{(-1)^{n-j}(2j)!c_{nm}}{2^nj!(n-j)!(2j-n-m)!}.
\end{equation}
then
\begin{equation}\label{identitypolyexp}
\frac{c_{00}(x^2-1)^n}{2^nn!}=\sum\limits_{k=0}^na_{00}^jx^{2j},\quad
\frac{c_{nm}}{2^nn!}\frac{d^{n+m}}{dx^{n+m}}(x^2-1)^n=\sum\limits_{j=\lceil\frac{n+m}{2}\rceil}^na_{nm}^jx^{2j-n-m}.
\end{equation}
Recall the Rodrigues' formula
\begin{equation}
\widehat P_n^m(x)=\frac{c_{nm}}{2^nn!}(1-x^2)^{\frac{m}{2}}\frac{d^{n+m}}{dx^{n+m}}(x^2-1)^n,
\end{equation}
for $0\leq m\leq n$, and use the fact $k_{\ell z}=\sqrt{k_{\ell}^2-k_{\rho}^2}$ and \eqref{identitypolyexp}, we obtain
\begin{equation}
\widehat P_n^m\Big(\frac{k_{\ell z}}{k_{\ell}}\Big)=\begin{cases}
\displaystyle\sum\limits_{s=0}^{n-r}b_{nm}^s\Big(\frac{k_{\rho}}{k_{\ell}}\Big)^{m+2s}, & n+m=2r,\\
\displaystyle\frac{k_{\ell}}{k_{\ell z}}\sum\limits_{s=0}^{n-r}b_{nm}^s\Big(\frac{k_{\rho}}{k_{\ell}}\Big)^{m+2s}, & n+m=2r+1,
\end{cases}
\end{equation}
where
\begin{equation}
b_{nm}^s=\sum\limits_{j=q}^{n}\frac{(-1)^{s}a_{nm}^j(j-r)!}{s!(j-r-s)!},\quad q=\max\Big(\Big\lceil\frac{n+m}{2}\Big\rceil, s+r\Big).
\end{equation}
Furthermore
\begin{equation}
\widehat P_n^m\Big(\frac{k_{\ell z}}{k_{\ell}}\Big)\widehat P_{n^{\prime}}^{m^{\prime}}\Big(\frac{k_{\ell^{\prime} z}}{k_{\ell^{\prime}}}\Big)=\sum\limits_{s=0}^{n-r+n'-r'}A_{nn'mm'}^sk_{\rho}^{m+m'+2s}\Big(\frac{k_{\ell}}{k_{\ell z}}\Big)^{\mu}\Big(\frac{k_{\ell'}}{k_{\ell' z}}\Big)^{\nu},
\end{equation}
where $\mu=(n+m)(\bmod\; 2)$, $\nu=(n'+m')(\bmod\; 2)$,
\begin{equation}\label{asslegendreexpcoef}
A_{nn'mm'}^{s}=\sum\limits_{t=\max(s-n'+r', 0)}^{\min(s, n-r)}\frac{b_{nm}^tb_{n'm'}^{s-t}}{k_{\ell}^{m+2t}k_{\ell'}^{m'+2(s-t)}}.
\end{equation}
For $-n\leq m<0$ or $-n'\leq m'<0$, similar formulas can be obtained by using the fact  $\widehat{P}_n^m(z)=(-1)^m\widehat{P}_n^{-m}(z)$ for $m<0$. In summary, we have
\begin{equation}
\widehat P_n^m\Big(\frac{k_{\ell z}}{k_{\ell}}\Big)\widehat P_{n^{\prime}}^{m^{\prime}}\Big(\frac{k_{\ell^{\prime} z}}{k_{\ell^{\prime}}}\Big)=\sum\limits_{s=0}^{n-r+n'-r'}A_{nn'mm'}^sk_{\rho}^{|m|+|m'|+2s}\Big(\frac{k_{\ell}}{k_{\ell z}}\Big)^{\mu}\Big(\frac{k_{\ell'}}{k_{\ell' z}}\Big)^{\nu},
\end{equation}
for all $n, n'=0, 1, \cdots, $ and $-n\leq m\leq n$, $-n'\leq m'\leq n'$ where $\mu=(n+|m|)(\bmod\; 2)$, $\nu=(n'+|m'|)(\bmod\; 2)$, $r=\big\lfloor (n+|m|)/2\big\rfloor, \quad r'=\big\lfloor(n'+|m'|)/2\big\rfloor,$
and
$$
A_{nn'mm'}^{s}=\sum\limits_{t=\max(s-n'+r', 0)}^{\min(s, n-r)}\frac{\tau_m\tau_{m'}b_{n|m|}^tb_{n'|m'|}^{s-t}}{k_{\ell}^{|m|+2t}k_{\ell'}^{|m'|+2(s-t)}}, \quad \tau_{\nu}=\begin{cases}
1, & \nu\geq 0,\\
(-1)^{-\nu}, & \nu<0.
\end{cases}
$$
Here, we also use the notation $A_{nn'mm'}^s$ since it is equal to the coefficients defined in \eqref{asslegendreexpcoef} for $m, m'\geq 0$.
Define integrals
\begin{equation}\label{M2Ltable}
\begin{split}
\mathcal S_{nm,ij}^{\uparrow*}(\rho, z, z')=\int_0^{\infty}k_{\rho}^{m+1}J_{n}(k_{\rho}\rho)\frac{\widetilde{\mathcal{Z}}_{\ell\ell'}^{\uparrow}(z, z')\sigma_{\ell\ell'}^{\uparrow*}(k_{\rho})}{k_{\ell z}}\Big(\frac{k_{\ell}}{k_{\ell z}}\Big)^i\Big(\frac{k_{\ell'}}{k_{\ell' z}}\Big)^j dk_{\rho},\\
\mathcal S_{nm,ij}^{\downarrow*}(\rho, z, z')=\int_0^{\infty}k_{\rho}^{m+1}J_{n}(k_{\rho}\rho)\frac{\widetilde{\mathcal{Z}}_{\ell\ell'}^{\downarrow}(z, z')\sigma_{\ell\ell'}^{\downarrow*}(k_{\rho})}{k_{\ell z}}\Big(\frac{k_{\ell}}{k_{\ell z}}\Big)^i\Big(\frac{k_{\ell'}}{k_{\ell' z}}\Big)^j dk_{\rho}.
\end{split}
\end{equation}
Then
\begin{equation}\label{metoletableform}
I_{nm,n'm'}^{**}(\rho, z, z')=\ri^{n+n'+1}\sum\limits_{s=0}^{n-r+n'-r'}A_{nn'mm'}^s\mathcal S_{m-m',|m|+|m'|+2s,\mu\nu}^{**}(\rho, z, z'),
\end{equation}
where $\mu=(n+|m|)(\bmod\; 2)$, $\nu=(n'+|m'|)(\bmod\; 2)$.
We pre-compute integrals $\mathcal S_{nm,\mu\nu}^{**}(\rho, z, z')$ on a 3-D grid $\{\rho_{i},z_{j},z_{k}^{\prime}\}$ in the
domain of interest for all $n=0,1,\cdots,p$; $m=0, 1, \cdots, 2n+1$; $\mu, \nu=0,1$. Then, a polynomial interpolation is performed for the computation of integrals in the FMM.

To compute Sommerfeld-type integrals $\mathcal S_{nm,\mu\nu}^{**}(\rho, z, z')$, it is
typical to deform the integration contour by pushing it away from the real
line into the fourth quadrant of the complex $k_{\rho}$-plane to avoid branch
points and poles in the integrand, see Fig. \ref{deformedcontour}. Here, we use a piecewise smooth contour
which consists of two segments:
\begin{equation}
\Gamma_{1}:\;\;\{k_{\rho}=\ri t,-b\leq t\leq0\},\quad\Gamma_{2}%
:\;\;\{k_{\rho}=t-\ri b,\quad0\leq t<\infty\}.
\end{equation}
We truncate $\Gamma_{2}$ at a point $t_{max}>0$, where the integrand has
decayed to a user specified tolerance. As an example, we plot the density $\sigma_{11}^{**}(k_{\rho})$ along $\Gamma_{2}$ (see, Fig. \ref{deformedcontour} (c)). The three layers case with
$k_{0}=1.5,k_{1}=0.8,k_{2}=2.0, d_0=0, d_1=-2.0$ and density given
in Section 3 is used. We can see that the density $\sigma_{11}^{**}(k_{\rho})$ is bounded.
\begin{figure}[ptbh]
	\center
	\subfigure[Contour in $k_{\rho}$ plane]{\includegraphics[scale=0.35]{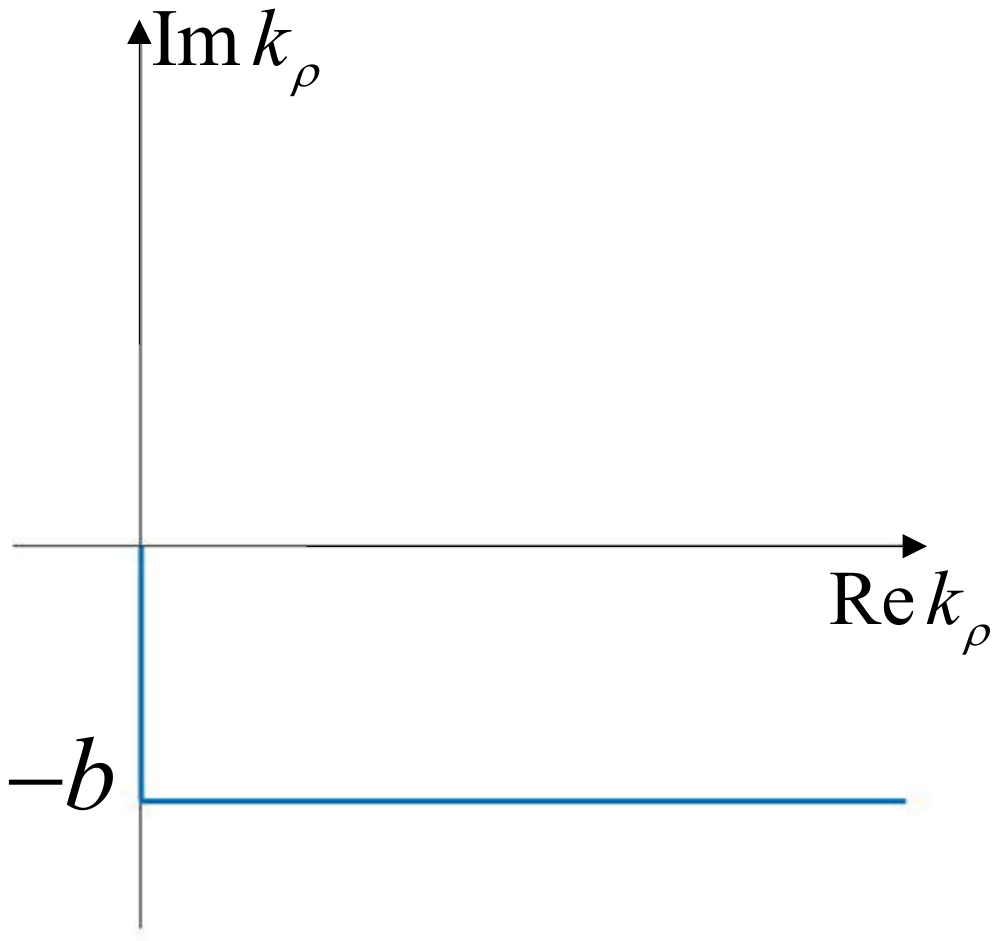}}\quad
	\subfigure[Contour in $k_{z}$ plane]{\includegraphics[scale=0.35]{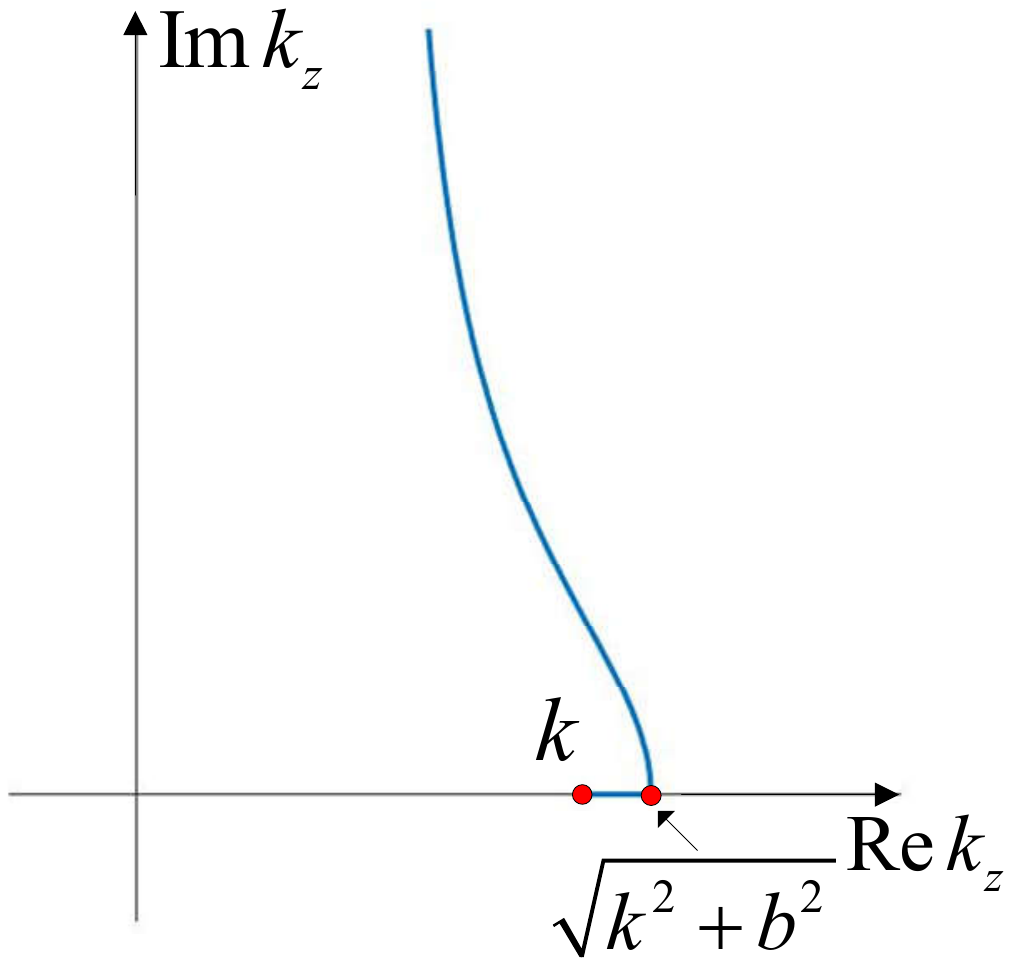}}\quad
	\subfigure[real part of $\sigma_{11}^{**}(k_{\rho})$ along $\Gamma_2$]{\includegraphics[scale=0.35]{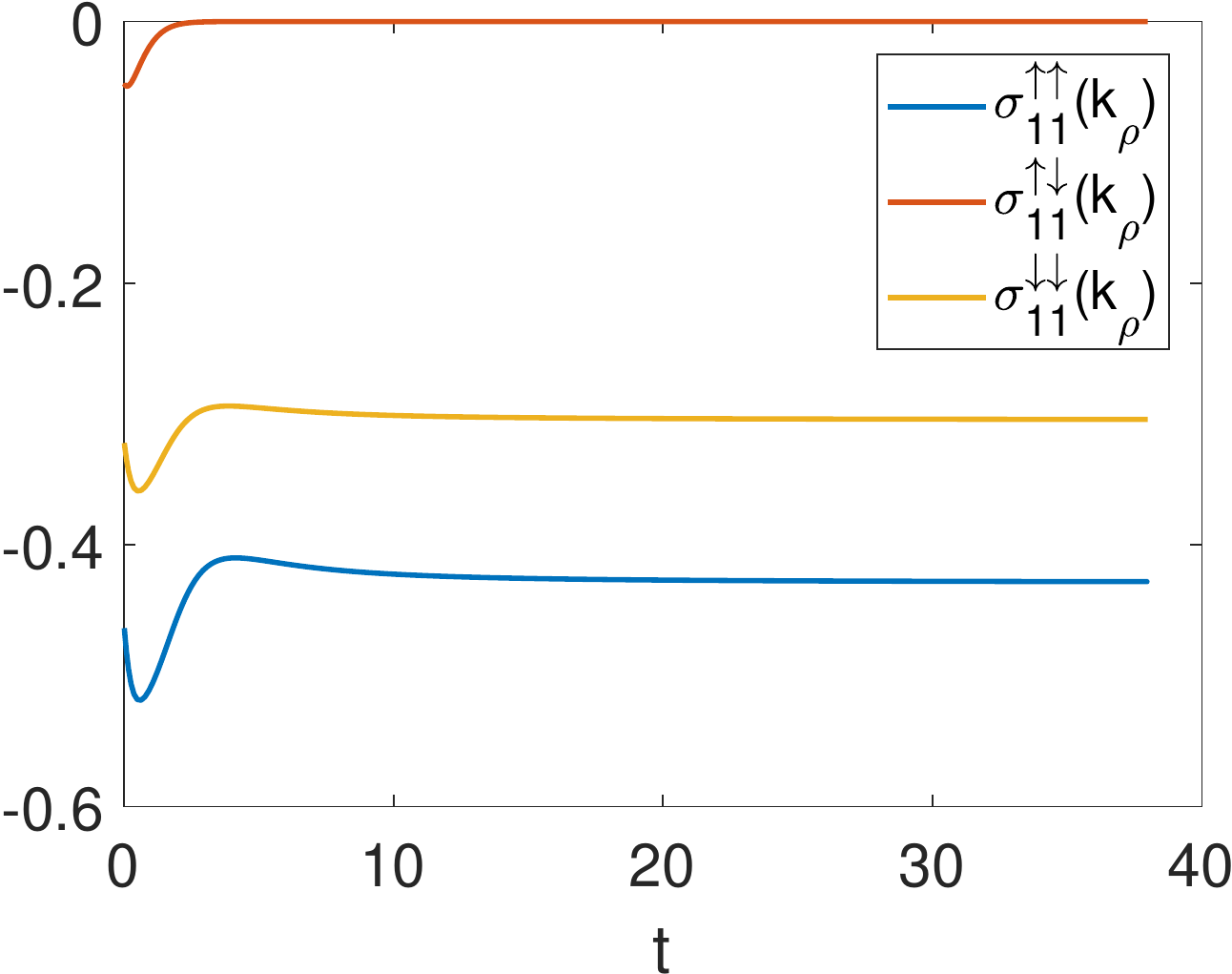}}
	\caption{Deformed contour for the computation of Sommerfeld-type integrals.}%
	\label{deformedcontour}%
\end{figure}

\section{Numerical results}

In this section, we present numerical results to demonstrate the performance
of the proposed FMM for time harmonic wave scattering in layered media.
This algorithm is implemented based on an open-source adaptive FMM package DASHMM
\cite{debuhr2016dashmm} on a
workstation with two Xeon E5-2699 v4 2.2 GHz processors (each has 22 cores)
and 500GB RAM using the gcc compiler version 6.3.

We test the problem in three layers media with interfaces placed at $z_{0}=0$, $z_{1}=-1.2$. Particles
are set to be uniformly distributed in irregular domains which are obtained by shifting the domain determined by $r=0.5-a+\frac{a}{8}(35\cos^{4}\theta-30\cos^{2}\theta+3)$
%\begin{equation}
%r=0.5-a+\frac{a}{8}(35\cos^{4}\theta-30\cos^{2}\theta+3),
%\end{equation}
with $a=0.1,0.15,0.05$ to new centers $(0,0,0.6)$, $(0,0,-0.6)$
and $(0,0,-1.8)$, respectively (see
Fig. \ref{fmmperformance} (a) for the cross section of the domains). All particles are generated by keeping the
uniform distributed particles in a larger cube within corresponding irregular
domains. In the layered media, the wave numbers are $k_0=1.2$, $k_1=1.5$, $k_2=1.8$. Let $\widetilde{\Phi}_{\ell}%
(\boldsymbol{r}_{\ell i})$ be the approximated values of $\Phi_{\ell
}(\boldsymbol{r}_{\ell i})$ calculated by FMM.
%Define $\ell^{2}$ and maximum errors as
%\begin{equation}
%Err_{2}^{\ell}:=\sqrt{\frac{\sum\limits_{i=1}^{N_{\ell}}|\Phi_{\ell
%		}(\boldsymbol{r}_{\ell i})-\widetilde{\Phi}_{\ell}(\boldsymbol{r}_{\ell
%			i})|^{2}}{\sum\limits_{i=1}^{N_{\ell}}|\Phi_{\ell}(\boldsymbol{r}_{\ell
%			i})|^{2}}},\qquad Err_{max}^{\ell}:=\max\limits_{1\leq i\leq{N_{\ell}}}%
%\frac{|\Phi_{\ell}(\boldsymbol{r}_{\ell i})-\widetilde{\Phi}_{\ell
%	}(\boldsymbol{r}_{\ell i})|}{|\Phi_{\ell}(\boldsymbol{r}_{\ell i})|}.
%\end{equation}
For accuracy test, we put
$N=912+640+1296$ particles in the irregular domains in three layers see Fig. \ref{fmmperformance} (a).
Convergence rates for the relative $\ell^2$ error ($Err_2$) and relative maximum error ($Err_{max}$) against $p$ are depicted in Fig. \ref{fmmperformance} (b).
The CPU time for the computation of all three free space components $\{\Phi^{free}_{\ell}(\boldsymbol{r}_{\ell i})\}_{\ell=0}^2$, three selected reaction components
$\{\Phi^{\uparrow\uparrow}_{00},\Phi^{\uparrow\uparrow}_{11},\Phi^{\downarrow\downarrow}_{22}\}$ and all sixteen reaction components $\Phi^{**}_{\ell\ell'}(\boldsymbol{r}_{\ell i})$ with truncation $p=4$ are compared in Fig. \ref{fmmperformance} (c) for up to 3 millions particles. It shows that all of them have an $O(N)$ complexity while the CPU time for the computation of reaction components has a much smaller linear scaling constant {due to the fact that most of the equivalent polarization sources are well-separated with the targets.} CPU time with multiple cores is given in Table \ref{Table:ex1three} and it shows that, due to the small amount of CPU time in computing the reaction components, the speedup of the parallel computing is mainly decided by the computation of the free space components. Here, we only use parallel implementation within the computation of each component, and all reaction components are computed independently. Therefore, it is straightforward to implement a version of the code which computes all components in parallel.

{ {\bf {Pre-computed translation operator tables:}} Given the truncation order $p$, there will be $4(p+1)(2p+1)$ 3-D tables to be pre-computed, whose size
depends on the required accuracy. In our numerical tests, we use tables with $30\times 30\times 30$ mesh with an interpolation polynomial of order $3$.
Then, each table have $91^3$ integrals to be calculated. For each integral,  the infinite interval is truncated at $t_{max}=100$ and a mesh with 51 sub-intervals
and 30 Gaussian quadrature points in each sub-interval is used.  It takes about 8.5 seconds on our workstation to compute one 3-D table
as all the integrals can be calculated in parallel.
%Note that the integrands of the Sommerfeld integrals \eqref{M2Ltable} are smooth along the deformed contour, faster algorithms can
%will be developed for the computation of the M2L translation matrix using better quadrature rule (e.g., Clenshaw-Curtis-Filon-type method)
%and the recurrence formula of Bessel function.
}

\begin{figure}[ptbh]
	\center
	\subfigure[distribution of particles]{\includegraphics[scale=0.29]{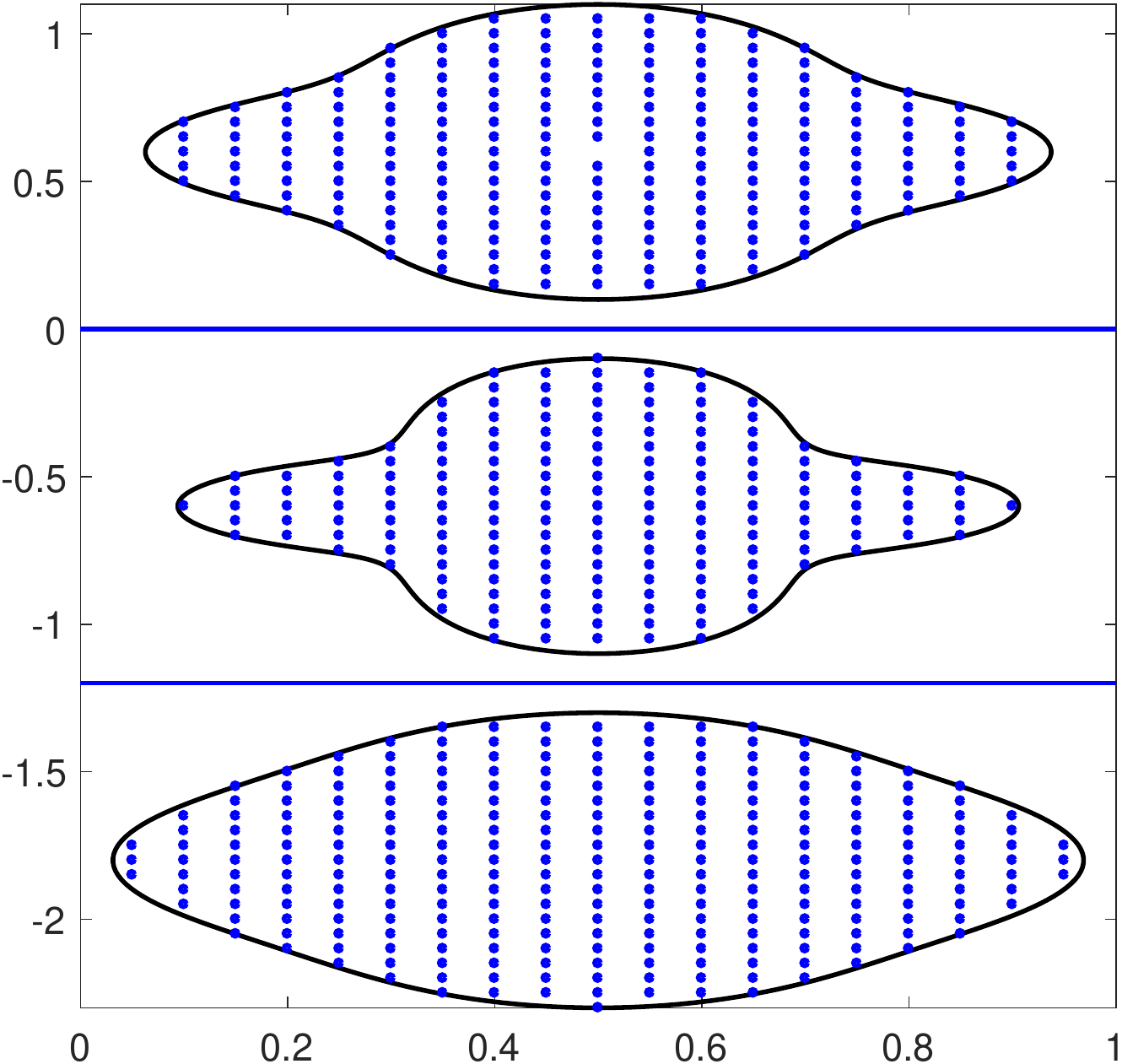}}\quad
	\subfigure[convergence rates vs. $p$]{\includegraphics[scale=0.2]{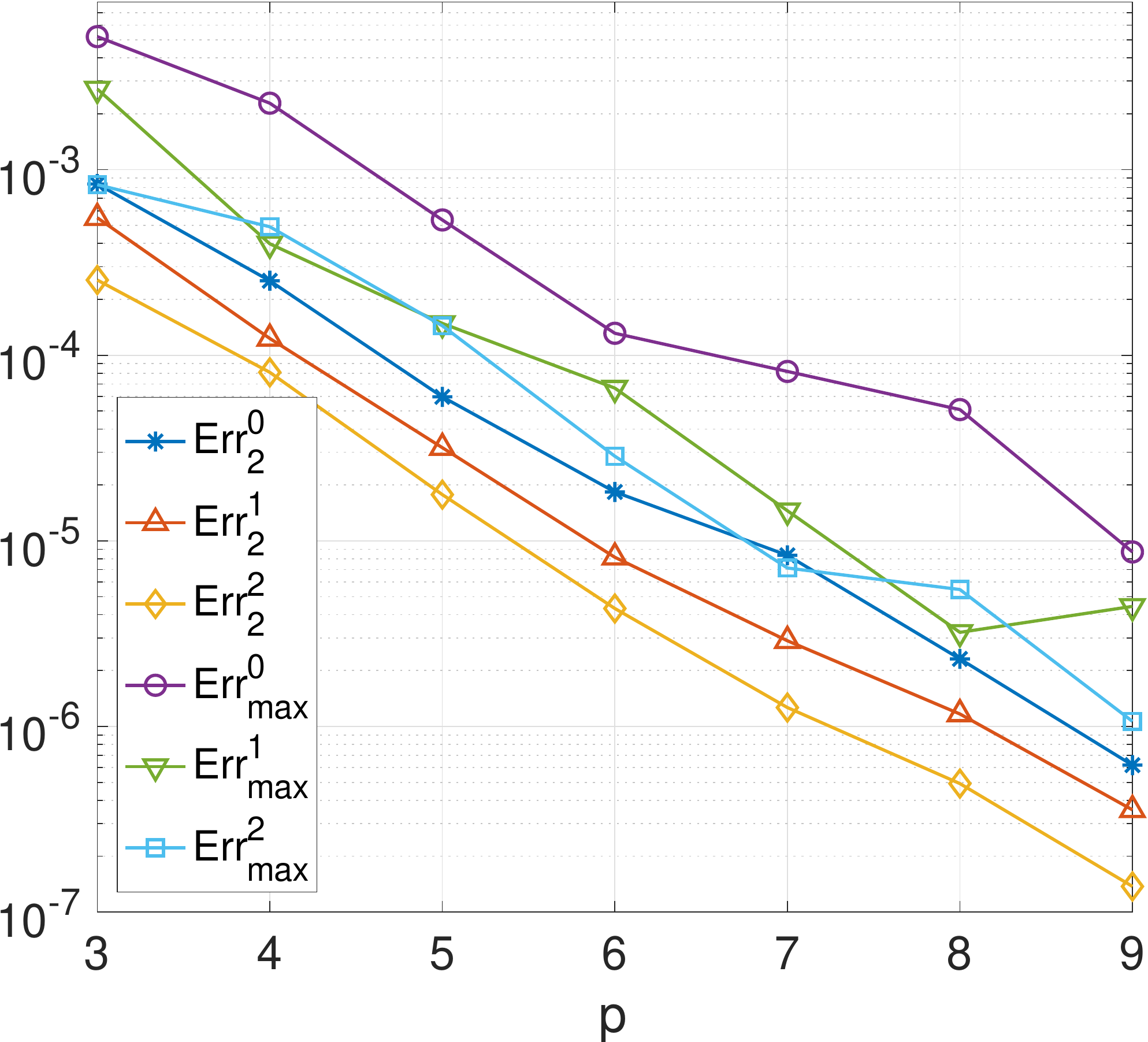}}
	\subfigure[CPU time vs. $N$]{\includegraphics[scale=0.2]{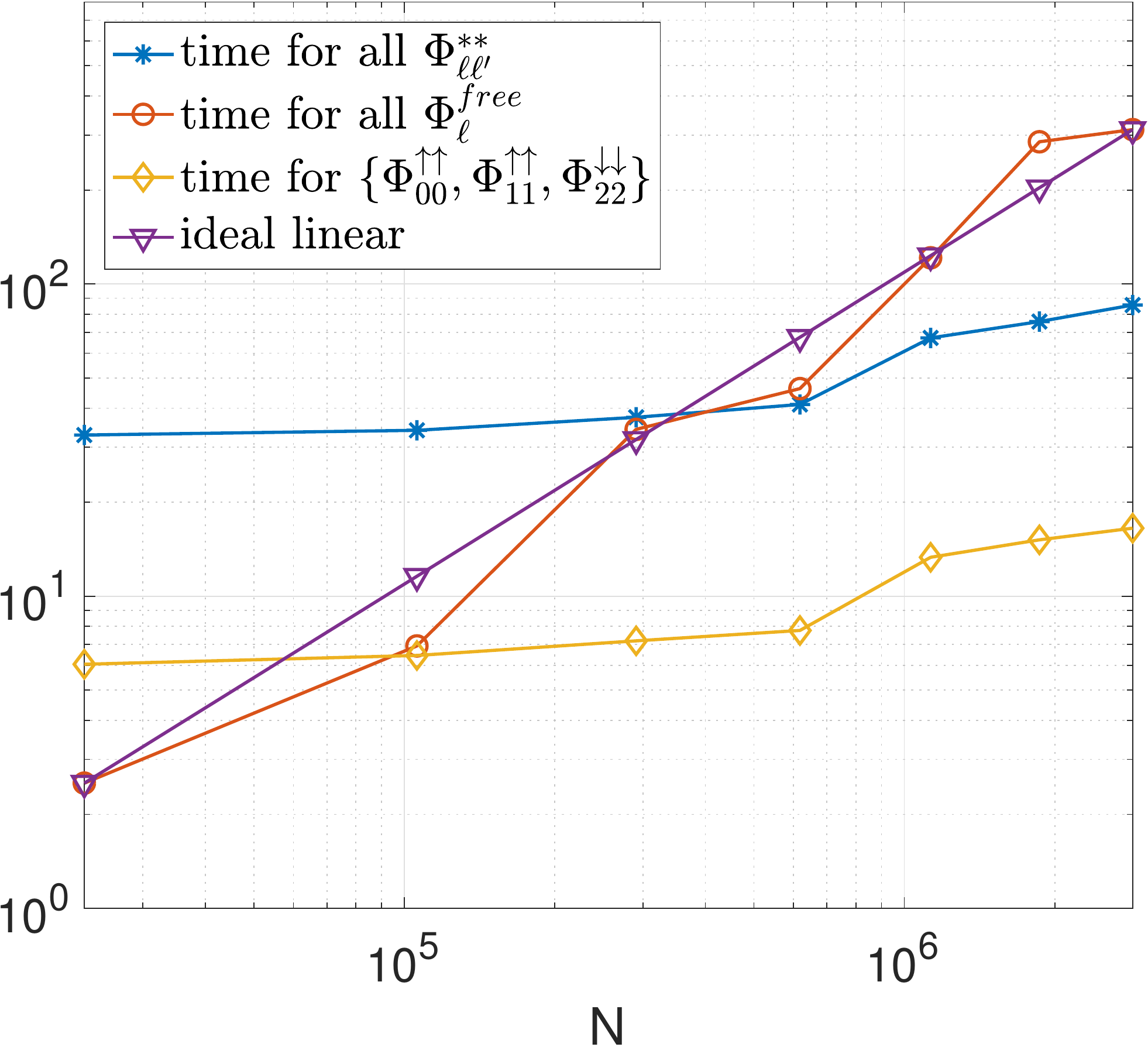}}
	\caption{Performance of FMM for a three layers media problem.}%
	\label{fmmperformance}%
\end{figure}
\begin{table}[ptbhptbhptbhptbh]
	\centering {\small
		\begin{tabular}
			[c]{|c|c|c|c|}\hline
			cores & $N$ & time for all $\{\Phi_{\ell}^{free}\}_{\ell=0}^2$ & time for $\{\Phi^{\uparrow\uparrow}_{00},\Phi^{\uparrow\uparrow}_{11},\Phi^{\downarrow\downarrow}_{22}\}$ 			\\\hline
			\multirow{4}{*}{1} &618256 & 46.01 & 4.60  \\\cline{2-4}
			& 1128556 & 120.9 &12.13  \\\cline{2-4}
			& 1862568 & 282.4 & 13.88 \\\cline{2-4}
			& 2861288 & 306.9 & 15.22 \\\hline
			\multirow{4}{*}{6} & 618256 & 8.75 & 2.77 \\\cline{2-4}
			& 1128556 & 21.95 &4.00  \\\cline{2-4}
			& 1862568 & 52.19 & 4.57  \\\cline{2-4}
			& 2861288 & 57.04 & 5.15  \\\hline
			\multirow{4}{*}{36} & 618256 & 2.29 & 2.74  \\\cline{2-4}
			& 1128556 & 5.00 &3.21  \\\cline{2-4}
			& 1862568 & 11.45 & 3.59  \\\cline{2-4}
			& 2861288 & 12.60 & 3.90  \\\hline
		\end{tabular}
	}
	\caption{Comparison of CPU time with multiple cores ($p=4$).}%
	\label{Table:ex1three}%
\end{table}
%\begin{table}[ptbhptbhptbhptbh]
%	\centering {\small
%		\begin{tabular}
%			[c]{|c|c|c|c|c|}\hline
%			cores & $N$ & time for all $\Phi_{\ell}^{free}$ & time for $\{\Phi^{\uparrow\uparrow}_{00},\Phi^{\uparrow\uparrow}_{11},\Phi^{\downarrow\downarrow}_{22}\}$ & time for all $\Phi^{**}_{\ell\ell'}$
%			\\\hline
%			\multirow{4}{*}{1} &618256 & 46.22 & 7.76 & 41.11 \\\cline{2-5}
%			& 1128556 & 121.3 &13.32 & 67.17 \\\cline{2-5}
%			& 1862568 & 285.4 & 15.15 & 75.77\\\cline{2-5}
%			& 2861288 & 312.2 & 16.51 & 85.61 \\\hline
%			\multirow{4}{*}{6} & 618256 & 8.75 & 6.48 & 33.61 \\\cline{2-5}
%			& 1128556 & 22.64 &7.58 & 39.20 \\\cline{2-5}
%			& 1862568 & 53.50 & 7.94 & 41.52 \\\cline{2-5}
%			& 2861288 & 58.55 & 8.56 & 43.95 \\\hline
%			\multirow{4}{*}{36} & 618256 & 2.04 & 6.51 & 34.19 \\\cline{2-5}
%			& 1128556 & 5.05 &6.83 & 34.86 \\\cline{2-5}
%			& 1862568 & 11.62 & 7.34 & 37.39 \\\cline{2-5}
%			& 2861288 & 12.71 & 7.22 & 39.33 \\\hline
%		\end{tabular}
%	}
%	\caption{Comparison of CPU time while multiple cores are used ($p=4$).}%
%	\label{Table:ex1three}%
%\end{table}
\section{Conclusion and discussion}

In this paper, we presented {an $O(N)$} fast multipole method for  the  calculation of the discretized integral operator for
the low-frequency Helmholtz equation in 3-D layered media. The layered media Green's function is decomposed into a free space and four types
of reaction field component(s). Using the spectral form of the layered Green's functions and the Funk-Hecke identity, we developed
new multipole expansion of $O(p^{2})$ terms for the far field of the reaction components,  which can be associated with
polarization sources at specific locations for the reaction field components. Multipole to local translation
operators are also developed for the reaction fields. As a result, the traditional ME-based FMM can be applied to both
the free space part and the reaction fields once the polarization sources are combined with the original sources.
Due to the separation of the polarization and the original source charges by a material interface, the
computational cost from the reaction field parts is only a fraction of that of the FMM for the free space part.
For a given layered structure, besides the one time pre-computations of interpolation tables for the translation operator,
computing the wave interactions of many sources in layered media costs the
same as that for the wave interactions in the free space.

{
The complexity of the FMM on $L$ for a $L-$layer medium will scale as $O(L^2)$ as the reaction
component expression is different for different combination of source and target
layers, the source-target interactions will need to be computed, separately.
However, the FMMs for different group of reaction components can be done in parallel,
and in practice $L$ is usually on the order of 10.  Also, to reduce the pre-computation time of the tables for the M2L translation matrices, which involve the Sommerfeld integrals \eqref{M2Ltable} with smooth integrands along the deformed contour, faster algorithms can
be developed by using better quadrature rule (e.g., Clenshaw-Curtis-Filon-type method)
and the recurrence formula of Bessel functions (see details in \cite{FMMlaplace}).

The FMM developed here only considered low-frequency Helmholtz equations. Similar to the free space case,
the multipole-expansion based FMM in this paper can not handle high frequency wave source interactions well as the number of terms
in the MEs will increase in proportion to $k*D$ \cite{darve2000} where $k$ is the wave number and $D$ is the size of the
scatterer.  To address this difficulty, plane waves expansions could be introduced to yield diagonal forms
for the translation operators in the FMM to produce an $O(N\log N)$ fast algorithm \cite{rokhlin1993}\cite{chew1997}.
Though, the plane wave expansion may suffer a low-frequency breakdown when interaction between sources within sub-wavelength
distance is computed for large objects with small fine structures. Therefore, it is important to develop a
 stable FMM applicable to a broad range of frequencies from low to high wave numbers without the low-frequency break down while still
applicable to high frequency scattering. Various methods have been proposed toward this goal again for the free space case,  including hybrid approaches
combining multipole and plane wave expansions \cite{cheng2004}\cite{chew2018} as well as FMMs using stable plane wave expansions \cite{darve2004}
and inhomogeneous plane waves \cite{huchew2000}.

 As a future work, as the Green's functions for the layered media are given in terms of plane waves
via Sommerfeld integration, we will study broadband FMMs for the layered media. } For an immediate task, we shall carry out error analysis for the new MEs and M2L operators for the reaction components for the Helmholtz equations in 3-D
layered media, extending our results for the 2-D Helmholtz equations \cite{zhang2018exponential}.

\section*{Acknowledgments}
The authors thank Min Hyung Cho for helpful discussions.

\end{document}